
\documentclass[12pt]{article}
\usepackage{amsmath}
\usepackage{amsfonts}
\usepackage{amssymb}

\setcounter{MaxMatrixCols}{10}

\textheight24cm \textwidth15.5cm \oddsidemargin1cm
\evensidemargin1cm \topmargin-1cm
\parindent0cm
\def\thebibliograph#1#2{\section*{{\normalsize \bf #2}}\list
   {[\arabic{enumi}]}{\settowidth\labelwidth{[#1]}\leftmargin\labelwidth
     \advance\leftmargin\labelsep
     \usecounter{enumi}}
     \def\newblock{\hskip .11em plus .33em minus -.07em}
     \sloppy
     \sfcode`\.=1000\relax}

\newtheorem{theorem}{Theorem}

\newtheorem{definition}{Definition}
\newtheorem{corollary}{Corollary}
\newtheorem{lemma}{Lemma}
\newtheorem{remark}{Remark}
\input{tcilatex}
\begin{document}

\title{Variable Triebel-Lizorkin-type spaces }
\author{ Douadi Drihem \ \thanks{%
M'sila University, Department of Mathematics, Laboratory of Functional
Analysis and Geometry of Spaces , P.O. Box 166, M'sila 28000, Algeria,
e-mail: \texttt{\ douadidr@yahoo.fr}}}
\date{\today }
\maketitle

\begin{abstract}
In this paper we study Triebel-Lizorkin-type spaces with variable smoothness
and integrability. We show that our space is well-defined, i.e., independent
of the choice of basis functions and we obtain their atomic
characterization. Moreover the Sobolev embeddings for these function spaces
are obtained.\vskip5pt

\textit{MSC 2010\/}: 46E35.

\textit{Key words and phrases}. Atom, Embeddings, Triebel space, maximal
function, variable exponent.
\end{abstract}

\section{Introduction}

Function spaces with variable exponents have been intensively studied in the
recent years by a significant number of authors. The motivation to study
such function spaces comes from applications to other fields of applied
mathematics, such that fluid dynamics and image processing. Some examples of
these spaces can be mentioned such as: variable exponent Lebesgue spaces $%
L^{p(\cdot )}$\ and variable Sobolev spaces $W^{k,p(\cdot )}$\ where the
study of these function spaces was initiated in \cite{KR91}. Almeida and
Samko \cite{AS06} and Gurka, Harjulehto and Nekvinda \cite{GHN07} introduced
variable exponent Bessel potential spaces, wich generalize the classical
Bessel potential spaces. Leopold [24-27] and Leopold \& Schrohe [28] studied
pseudo-differential operators, they introduced related Besov spaces with
variable smoothness $B_{p,p}^{\alpha (\cdot )}$. Along a different line of
study, J.-S. Xu \cite{Xu08}, \cite{Xu09} has studied Besov spaces with
variable $p$, but fixed $q$ and $\alpha $. More general function spaces with
variable smoothness were explicitly studied by Besov \cite{B03}, including
characterizations by differences. Besov spaces of variable smoothness and
integrability, $B_{p(\cdot ),q(\cdot )}^{\alpha (\cdot )}$, initially
appeared in the paper of A. Almeida and P. H\"{a}st\"{o} \cite{AH}. Several
basic properties were established, such as the Fourier analytical
characterisation and Sobolev embeddings. When $p,q,\alpha $ are constants
they coincide with the usual function spaces $B_{p,q}^{s}$. Also, A. I.
Tyulenev \cite{Ty151}, \cite{Ty152} has studied some new function spaces of
variable smoothness.

Triebel-Lizorkin spaces with variable exponents\ were introduced by \cite%
{DHR}. They proved a discretization by the so called $\varphi $-transform.
Also atomic and molecular decomposition of these function spaces are
obtained and used it to derive trace results. The Sobolev embedding of these
function spaces was proved by J. Vyb\'{\i}ral, \cite{V}. Some properties of
these function spaces such as local means characterizations and
characterizations by ball means of differences can be found in \cite{KV121}
and \cite{KV122}. When $\alpha ,p,q$ are constants they coincide with the
usual function spaces $F_{p,q}^{\alpha }$.\vskip5pt

In recent years, there has been increasing interest in a new family of
function spaces\ which generalize the\ Triebel-Lizorkin spaces, called $%
F_{p,q}^{s,\tau }$ spaces, were introduced and studied in \cite{YY1}. When $%
\tau =0$, they coincide with the usual function spaces $F_{p,q}^{s}$.
Various properties of these function spaces including atomic, molecular or
wavelet decompositions, characterizations by differences, have already been
established in [10, 12-13, 29, 33-35, 43-46, 48]. Moreover, these function
spaces, including some of their special cases related to $Q$ spaces.\vskip5pt

Based on Triebel-Lizorkin-type spaces and Triebel-Lizorkin spaces with
variable exponents $F_{p(\cdot ),q(\cdot )}^{\alpha (\cdot )}$, we present
another Triebel-Lizorkin-type spaces with variable smoothness and
integrability, which covers Triebel-Lizorkin-type spaces with fixed
exponents. These type of function spaces are introduced in \cite{YZW15},
where several properties are obtained such as atomic decomposition and the
boundedness of trace operator.\vskip5pt

\ \ The paper is organized as follows. First we give some preliminaries
where we fix some notations and recall some basics facts on function spaces
with variable integrability\ and we give some key technical lemmas needed in
the proofs of the main statements. For making the presentation clearer, we
give their proofs later in Section 6. In Section 3\ we define the spaces $%
\mathfrak{F}_{p(\cdot ),q(\cdot )}^{\alpha (\cdot ),\tau (\cdot )}$ where
several basic properties such as the $\varphi $-transform characterization
are obtained. In Section 4 we prove elementary embeddings between these
functions spaces, as well as Sobolev embeddings. In Section 5, we give the
atomic decomposition of these function spaces.

\section{Preliminaries}

As usual, we denote by $\mathbb{R}^{n}$ the $n$-dimensional real Euclidean
space, $\mathbb{N}$ the collection of all natural numbers and $\mathbb{N}%
_{0}=\mathbb{N}\cup \{0\}$. The letter $\mathbb{Z}$ stands for the set of
all integer numbers.\ The expression $f\lesssim g$ means that $f\leq c\,g$
for some independent constant $c$ (and non-negative functions $f$ and $g$),
and $f\approx g$ means $f\lesssim g\lesssim f$. As usual for any $x\in 
\mathbb{R}$, $[x]$ stands for the largest integer smaller than or equal to $%
x $.\vskip5pt

By supp $f$ we denote the support of the function $f$ , i.e., the closure of
its non-zero set. If $E\subset {\mathbb{R}^{n}}$ is a measurable set, then $%
|E|$ stands for the (Lebesgue) measure of $E$ and $\chi _{E}$ denotes its
characteristic function.\vskip5pt

The Hardy-Littlewood maximal operator $\mathcal{M}$ is defined on $L_{%
\mathrm{loc}}^{1}$ by%
\begin{equation*}
\mathcal{M}f(x)=\sup_{r>0}\frac{1}{\left\vert B(x,r)\right\vert }%
\int_{B(x,r)}\left\vert f(y)\right\vert dy
\end{equation*}%
and $M_{B}f(x)=$ $\frac{1}{\left\vert B\right\vert }\int_{B}\left\vert
f(y)\right\vert dy$, $x\in B$. The symbol $\mathcal{S}(\mathbb{R}^{n})$ is
used in place of the set of all Schwartz functions $\varphi $ on $\mathbb{R}%
^{n}$. We denote by $\mathcal{S}^{\prime }(\mathbb{R}^{n})$ the dual space
of all tempered distributions on $\mathbb{R}^{n}$. The Fourier transform of
a tempered distribution $f$ is denoted by $\mathcal{F}f$ while its inverse
transform is denoted by $\mathcal{F}^{-1}f$.\vskip5pt

For $v\in \mathbb{Z}$ and $m=(m_{1},...,m_{n})\in \mathbb{Z}^{n}$, let $%
Q_{v,m}$ be the dyadic cube in $\mathbb{R}^{n}$, $Q_{v,m}=%
\{(x_{1},...,x_{n}):m_{i}\leq 2^{v}x_{i}<m_{i}+1,i=1,2,...,n\}$. For the
collection of all such cubes we use $\mathcal{Q}:=\{Q_{v,m}:v\in \mathbb{Z}%
,m\in \mathbb{Z}^{n}\}$. For each cube $Q$, we denote its center by $c_{Q}$,
its lower left-corner by $x_{Q_{v,m}}=2^{-v}m$ of $Q=Q_{v,m}$ and its side
length by $l(Q)$. For $r>0$, we denote by $rQ$ the cube concentric with $Q$
having the side length $rl(Q)$. Furthermore, we put $v_{Q}=-\log _{2}l(Q)$
and $v_{Q}^{+}=\max (v_{Q},0)$.\vskip5pt

For $v\in \mathbb{Z}$, $\varphi \in \mathcal{S}(\mathbb{R}^{n})$ and $x\in 
\mathbb{R}^{n}$, we set $\widetilde{\varphi }(x):=\overline{\varphi (-x)}$, $%
\varphi _{v}(x):=2^{vn}\varphi (2^{v}x)$, and%
\begin{equation*}
\varphi _{v,m}(x):=2^{vn/2}\varphi (2^{v}x-m)=|Q_{v,m}|^{1/2}\varphi
_{v}(x-x_{Q_{v,m}})\quad \text{if\quad }Q=Q_{v,m}.
\end{equation*}

The variable exponents that we consider are always measurable functions $p$
on $\mathbb{R}^{n}$ with range in $[c,\infty \lbrack $ for some $c>0$. We
denote the set of such functions by $\mathcal{P}_{0}$. The subset of
variable exponents with range $[1,\infty \lbrack $ is denoted by $\mathcal{P}
$. We use the standard notation $p^{-}:=\underset{x\in \mathbb{R}^{n}}{\text{%
ess-inf}}$ $p(x)$,$\quad p^{+}:=\underset{x\in \mathbb{R}^{n}}{\text{ess-sup 
}}p(x)$.

The variable exponent Lebesgue space $L^{{p(\cdot )}}$ is the class of all
measurable functions $f$ on ${\mathbb{R}^{n}}$ such that the modular 
\begin{equation*}
\varrho _{{p(\cdot )}}(f):=\int_{\mathbb{R}^{n}}|f(x)|^{p(x)}\,dx
\end{equation*}%
is finite. This is a quasi-Banach function space equipped with the
quasi-norm 
\begin{equation*}
\Vert f\Vert _{p(\cdot )}:=\inf \Big\{\mu >0:\varrho _{{p(\cdot )}}\Big(%
\frac{1}{\mu }f\Big)\leq 1\Big\}.
\end{equation*}%
If $p(x):=p$ is constant, then $L^{{p(\cdot )}}=L^{p}$ is the classical
Lebesgue space.

A useful property is that $\varrho _{p(\cdot )}(f)\leqslant 1$ if and only
if $\Vert f\Vert _{p(\cdot )}\leqslant 1$ (\textit{unit ball property}).
This property is clear for constant exponents due to the obvious relation
between the norm and the modular in that case.

Let $p,q\in \mathcal{P}_{0}$. The space $L^{p(\cdot )}(\ell ^{q(\cdot )})$
is defined to be the set of all sequences $\left( f_{v}\right) _{v\geq 0}$
of functions such that%
\begin{equation*}
\left\Vert \left( f_{v}\right) _{v\geq 0}\right\Vert _{L^{p(\cdot )}(\ell
^{q(\cdot )})}:=\left\Vert \left\Vert \left( f_{v}(x)\right) _{v\geq
0}\right\Vert _{\ell ^{q(x)}}\right\Vert _{L^{p(\cdot )}}<\infty .
\end{equation*}%
It is easy to show that $L^{p(\cdot )}(\ell ^{q(\cdot )})$\ is always a
quasi-normed space\ and it is a normed space, if $\min (p(x),q(x))\geq 1$\
holds point-wise.

We say that $g:\mathbb{R}^{n}\rightarrow \mathbb{R}$ is \textit{locally }log%
\textit{-H\"{o}lder continuous}, abbreviated $g\in C_{\text{loc}}^{\log }$,
if there exists $c_{\log }(g)>0$ such that%
\begin{equation}
\left\vert g(x)-g(y)\right\vert \leq \frac{c_{\log }(g)}{\log
(e+1/\left\vert x-y\right\vert )}  \label{lo-log-Holder}
\end{equation}%
for all $x,y\in \mathbb{R}^{n}$. We say that $g$ satisfies the log\textit{-H%
\"{o}lder decay condition}, if there exists $g_{\infty }\in \mathbb{R}$ and
a constant $c_{\log }>0$ such that%
\begin{equation*}
\left\vert g(x)-g_{\infty }\right\vert \leq \frac{c_{\log }}{\log
(e+\left\vert x\right\vert )}
\end{equation*}%
for all $x\in \mathbb{R}^{n}$. We say that $g$ is \textit{globally}-log%
\textit{-H\"{o}lder continuous}, abbreviated $g\in C^{\log }$, if it is%
\textit{\ }locally log-H\"{o}lder continuous and satisfies the log-H\"{o}%
lder decay\textit{\ }condition.\textit{\ }The constants $c_{\log }(g)$ and $%
c_{\log }$ are called the \textit{locally }log\textit{-H\"{o}lder constant }%
and the log\textit{-H\"{o}lder decay constant}, respectively\textit{.} We
note that all functions $g\in C_{\text{loc}}^{\log }$ always belong to $%
L^{\infty }$.\vskip5pt

We define the following class of variable exponents%
\begin{equation*}
\mathcal{P}^{\mathrm{log}}:=\Big\{p\in \mathcal{P}:\frac{1}{p}\in C^{\log }%
\Big\},
\end{equation*}%
were introduced in $\mathrm{\cite[Section \ 2]{DHHMS}}$. We define $%
1/p_{\infty }:=\lim_{|x|\rightarrow \infty }1/p(x)$\ and we use the
convention $\frac{1}{\infty }=0$. Note that although $\frac{1}{p}$ is
bounded, the variable exponent $p$ itself can be unbounded. It was shown in $%
\mathrm{\cite{DHHR}}$\textrm{, }Theorem 4.3.8 that $\mathcal{M}:L^{p(\cdot
)}\rightarrow L^{p(\cdot )}$ is bounded if $p\in \mathcal{P}^{\mathrm{log}}$
and $p^{-}>1$, see also $\mathrm{\cite{DHHMS}}$, Theorem 1.2.\ Let $p\in 
\mathcal{P}^{\mathrm{log}}$, $\varphi \in L^{1}$ and $\Psi \left( x\right)
:=\sup_{\left\vert y\right\vert \geq \left\vert x\right\vert }\left\vert
\varphi \left( y\right) \right\vert $. We suppose that $\Psi \in L^{1}$.
Then it was proved in $\mathrm{\cite[Lemma \ 4.6.3]{DHHR}}$ that 
\begin{equation*}
\Vert \varphi _{\varepsilon }\ast f\Vert _{{p(\cdot )}}\leq c\Vert \Psi
\Vert _{{1}}\Vert f\Vert _{{p(\cdot )}}
\end{equation*}%
for all $f\in L^{p(\cdot )}$, where $\varphi _{\varepsilon }:=\frac{1}{%
\varepsilon ^{n}}\varphi \left( \frac{\cdot }{\varepsilon }\right) $. We
also refer to the papers $\mathrm{\cite{CFMP}}$ and $\mathrm{\cite{Di}}$%
\textrm{,} where various results on maximal function in variable Lebesgue
spaces were obtained.\vskip5pt

Recall that $\eta _{v,m}(x):=2^{nv}(1+2^{v}\left\vert x\right\vert )^{-m}$,
for any $x\in \mathbb{R}^{n}$, $v\in \mathbb{N}_{0}$ and $m>0$. Note that $%
\eta _{v,m}\in L^{1}$ when $m>n$ and that $\left\Vert \eta _{v,m}\right\Vert
_{1}=c_{m}$ is independent of $v$, where this type of function was
introduced in \cite{HN07} and \cite{DHHR}. By $c$ we denote generic positive
constants, which may have different values at different occurrences.

\subsection{Some technical lemmas}

In this subsection we present some results which are useful for us. The
following lemma is from \cite[Lemma 19]{KV122}, see also \cite[Lemma 6.1]%
{DHR}.

\begin{lemma}
\label{DHR-lemma}Let $\alpha \in C_{\mathrm{loc}}^{\log }$ and let $R\geq
c_{\log }(\alpha )$, where $c_{\log }(\alpha )$ is the constant from $%
\mathrm{\eqref{lo-log-Holder}}$ for $\alpha $. Then%
\begin{equation*}
2^{v\alpha (x)}\eta _{v,m+R}(x-y)\leq c\text{ }2^{v\alpha (y)}\eta
_{v,m}(x-y)
\end{equation*}%
with $c>0$ independent of $x,y\in \mathbb{R}^{n}$ and $v,m\in \mathbb{N}%
_{0}. $
\end{lemma}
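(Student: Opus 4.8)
The first step is to eliminate the parameter $m$. By the definition of $\eta_{v,m}$ we have $\eta_{v,m+R}(x-y)=(1+2^{v}|x-y|)^{-R}\,\eta_{v,m}(x-y)$, so the claim is equivalent to
\begin{equation*}
2^{v(\alpha(x)-\alpha(y))}\le c\,(1+2^{v}|x-y|)^{R}\qquad\text{for all }x,y\in\mathbb{R}^{n},\ v\in\mathbb{N}_{0}.
\end{equation*}
When $\alpha(x)\le\alpha(y)$, or $v=0$, or $x=y$, the right-hand side is $\ge1$ and the inequality is trivial, so I assume $\alpha(x)>\alpha(y)$, $v\ge1$ and $x\ne y$, and use the local log-H\"older estimate \eqref{lo-log-Holder} to bound $\alpha(x)-\alpha(y)\le c_{\log}(\alpha)/\log(e+1/|x-y|)$. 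Taking logarithms, choosing $c:=e^{c_{\log}(\alpha)}$, and exploiting $R\ge c_{\log}(\alpha)$ to replace the coefficient $c_{\log}(\alpha)$ of $\log(1+2^{v}|x-y|)$ by $R$, the statement reduces to the purely elementary inequality
\begin{equation*}
\frac{v\log2}{\log(e+1/|x-y|)}\le 1+\log(1+2^{v}|x-y|).
\end{equation*}

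To prove this I put $A:=\log(e+1/|x-y|)$ and $B:=\log(1+2^{v}|x-y|)$ and observe, first, that $A>1$ because $e+1/|x-y|>e$, and, second, that expanding $(e+1/|x-y|)(1+2^{v}|x-y|)$ into a sum of four positive terms exhibits $\frac{1}{|x-y|}\cdot 2^{v}|x-y|=2^{v}$ as one of them; hence $(e+1/|x-y|)(1+2^{v}|x-y|)\ge2^{v}$, and taking logarithms yields $A+B\ge v\log2$. Setting $w:=v\log2$ we thus have $B\ge\max(w-A,0)$, so it only remains to check that $w/A\le1+B$. If $A\ge w$ this is clear since $w/A\le1$; if $1\le A<w$ it amounts to $w\le A(1+w-A)$, which holds because $A(1+w-A)-w=(A-1)(w-A)\ge0$. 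Multiplying the bound for $\alpha(x)-\alpha(y)$ by $w$ and exponentiating then gives the required estimate.

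The only point that is not routine bookkeeping is the elementary inequality, and its heart is the trick of multiplying $e+1/|x-y|$ by $1+2^{v}|x-y|$: this couples the two competing scales — the first factor being large exactly when $|x-y|$ is small and the second exactly when $2^{v}|x-y|$ is large — and produces the cross term $2^{v}$ that compensates for the factor $v$ in the numerator. The reduction to $\alpha(x)>\alpha(y)$, the handling of constants, and the use of the hypothesis $R\ge c_{\log}(\alpha)$ are all immediate.
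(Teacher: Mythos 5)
Your proof is correct. Note that the paper itself does not prove this lemma: it is quoted from the literature (the text cites \cite[Lemma 19]{KV122} and \cite[Lemma 6.1]{DHR}), so there is no in-paper argument to compare against. Your reduction is the right one: since $\eta_{v,m+R}(x-y)=(1+2^{v}|x-y|)^{-R}\eta_{v,m}(x-y)$, the claim is exactly $2^{v(\alpha(x)-\alpha(y))}\leq c\,(1+2^{v}|x-y|)^{R}$, and after discarding the trivial cases and invoking \eqref{lo-log-Holder} together with $R\geq c_{\log}(\alpha)$ it suffices to prove $\frac{v\log 2}{\log(e+1/|x-y|)}\leq 1+\log(1+2^{v}|x-y|)$ with $c=e^{c_{\log}(\alpha)}$ (the degenerate case $c_{\log}(\alpha)=0$ being trivial since then $\alpha$ is constant). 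I checked your elementary inequality: the product $(e+1/|x-y|)(1+2^{v}|x-y|)$ indeed contains the cross term $2^{v}$, giving $A+B\geq w:=v\log 2$ with $A=\log(e+1/|x-y|)>1$ and $B=\log(1+2^{v}|x-y|)\geq 0$, and the identity $A(1+w-A)-w=(A-1)(w-A)$ settles the remaining case $1<A<w$. The argument usually given in the cited sources splits into the two regimes $2^{v}|x-y|\leq 1$ (where the log-H\"older bound alone gives $2^{v|\alpha(x)-\alpha(y)|}\leq e^{c_{\log}(\alpha)}$) and $2^{v}|x-y|>1$ (where the factor $(1+2^{v}|x-y|)^{R}$ does the work); your single inequality handles both regimes uniformly, which is a clean and slightly more self-contained way to package the same idea.
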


The previous lemma allows us to treat the variable smoothness in many cases
as if it were not variable at all, namely we can move the term inside the
convolution as follows:%
\begin{equation*}
2^{v\alpha (x)}\eta _{v,m+R}\ast f(x)\leq c\text{ }\eta _{v,m}\ast
(2^{v\alpha (\cdot )}f)(x).
\end{equation*}

\begin{lemma}
\label{r-trick}Let $r,R,N>0$, $m>n$ and $\theta ,\omega \in \mathcal{S}%
\left( \mathbb{R}^{n}\right) $ with $\mathrm{supp}\mathcal{F}\omega \subset 
\overline{B(0,1)}$. Then there exists $c=c(r,m,n)>0$ such that for all $g\in 
\mathcal{S}^{\prime }\left( \mathbb{R}^{n}\right) $, we have%
\begin{equation}
\left\vert \theta _{R}\ast \omega _{N}\ast g\left( x\right) \right\vert \leq
c\text{ }\max \Big(1,\Big(\frac{N}{R}\Big)^{m}\Big)(\eta _{N,m}\ast
\left\vert \omega _{N}\ast g\right\vert ^{r}(x))^{1/r},\quad x\in \mathbb{R}%
^{n},  \label{r-trick-est}
\end{equation}%
where $\theta _{R}(\cdot )=R^{n}\theta (R\cdot )$, $\omega _{N}(\cdot
)=N^{n}\omega (N\cdot )$ and $\eta _{N,m}:=N^{n}(1+N\left\vert \cdot
\right\vert )^{-m}$.
\end{lemma}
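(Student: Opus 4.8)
The plan is to reduce the estimate to a pointwise bound on the kernel $\theta_R * \omega_N$ and then apply the standard ``$r$-trick'' argument (the sub-mean-value inequality for functions whose Fourier transform has controlled support). First I would observe that $\omega_N * g$ is, after rescaling, a tempered distribution whose Fourier transform is supported in $\overline{B(0,N)}$, since $\mathrm{supp}\,\mathcal F\omega_N = N\cdot\mathrm{supp}\,\mathcal F\omega \subset \overline{B(0,N)}$; hence $\theta_R * \omega_N * g = (\theta_R * \omega_N) * (\omega_N * g)$ with the first factor a Schwartz function. The key is therefore to control $\theta_R * \omega_N$ and its derivatives by a multiple of $\max(1,(N/R)^m)\,\eta_{N,m}$.

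The main estimate I would establish is: for every $M>0$ there is $c=c(\theta,\omega,M,n)$ such that
\begin{equation*}
|\theta_R * \omega_N(z)| \leq c\,\max\!\Big(1,\Big(\tfrac NR\Big)^{M}\Big)\,N^{n}(1+N|z|)^{-M}.
\end{equation*}
To see this, write $\theta_R * \omega_N(z) = \int R^{n}\theta(R(z-y))\,N^{n}\omega(Ny)\,dy$ and split into the regimes $R\le N$ and $R>N$. When $R\le N$ one substitutes $y = w/N$, uses that $|\omega(w)|\lesssim (1+|w|)^{-L}$ for all $L$ and $|\theta|\lesssim 1$, and integrates; the slowly varying factor is $N^n(1+N|z|)^{-M}$ and the comparison of scales produces the power $(N/R)^{?}$, which one bounds by $\max(1,(N/R)^M)$ after choosing $L$ large. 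When $R>N$ the roles are reversed: $\theta_R$ is the more concentrated bump, $\omega_N$ is slowly varying on its support, and one gets the bound with constant $c$ (the ``$1$'' in the max). The same reasoning applied to $\partial^\beta(\theta_R*\omega_N)$ for $|\beta|$ up to some fixed order gives the analogous decay, which is what the $r$-trick consumes.

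Granting this kernel bound, I would finish exactly as in the classical case. Let $h := \omega_N * g$, so $\theta_R*\omega_N*g(x) = \int (\theta_R*\omega_N)(x-y)\,h(y)\,dy$. For the $r$-trick one writes, for $0<r\le 1$ (the case $r\ge 1$ being handled by Hölder and is easier, or one may simply reduce to small $r$ since $\eta_{N,m}\ast|h|^{r}$ is increasing in the relevant sense after renormalization),
\begin{equation*}
|h(y)| = |h(y)|^{1-r}\,|h(y)|^{r} \le \big(\sup_{z}(1+N|y-z|)^{s}|h(z)|\big)^{1-r}(1+N|y-\cdot|)^{-s(1-r)}\,|h(y)|^{r},
\end{equation*}
and uses the Peetre-type maximal bound $\sup_z (1+N|y-z|)^{s}|h(z)| \le c\,\sup_z (1+N|x-z|)^{s}|h(z)|\,(1+N|x-y|)^{s}$ together with the finiteness of $\sup_z(1+N|z|)^{s}|h(z)|$ for $s$ large (valid since $\mathcal F h$ has compact support, so $h$ is a smooth function of polynomial growth and the weighted sup is finite for $s$ large enough — this is where band-limitedness is used, via a Nikolskii-type inequality). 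Inserting the kernel bound and collecting the weights, the exponents are arranged so that a factor $(1+N|x-y|)^{-m}$ survives against $|h(y)|^{r}$ and the remaining powers are absorbed; integrating in $y$ and taking the $r$-th root yields $(\eta_{N,m}*|h|^{r}(x))^{1/r}$ with the constant $c\max(1,(N/R)^{m})$.

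The step I expect to be the main obstacle is the bookkeeping in the kernel estimate when $R<N$: one must be careful that the power of $N/R$ that genuinely appears does not exceed $m$, so that it can be absorbed into $\max(1,(N/R)^m)$, and simultaneously that enough decay $(1+N|z|)^{-m}$ is retained; this forces a judicious choice of how many derivatives of $\omega$ (equivalently, how much decay of $\theta_R$) to exploit, and is exactly the point where the hypothesis $\mathrm{supp}\,\mathcal F\omega\subset\overline{B(0,1)}$ (giving the scale $N$ rather than $N+R$ in $\eta_{N,m}$) is essential. The rest is routine manipulation of the weights $(1+N|\cdot|)^{-s}$ and an application of the triangle-type inequality $(1+N|x-y|)\le(1+N|x-z|)(1+N|z-y|)$.
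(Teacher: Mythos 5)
The paper itself does not prove this lemma (it is quoted from the reference [D6]), so your argument has to stand on its own, and as written it does not. The first problem is the opening reduction: the identity $\theta_R\ast\omega_N\ast g=(\theta_R\ast\omega_N)\ast(\omega_N\ast g)$ is false, since the right-hand side equals $\theta_R\ast\omega_N\ast\omega_N\ast g$. To convolve a genuine kernel against $h:=\omega_N\ast g$ you must introduce an auxiliary $\widetilde{\omega}\in\mathcal S(\mathbb R^n)$ with $\mathcal F\widetilde{\omega}\equiv1$ on $\overline{B(0,1)}$, so that $h=\widetilde{\omega}_N\ast h$ and $\theta_R\ast\omega_N\ast g=(\theta_R\ast\widetilde{\omega}_N)\ast h$; your kernel estimate (which is correct, and routine) then applies to $\theta_R\ast\widetilde{\omega}_N$. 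Relatedly, your Peetre-type manipulations have the weights upside down: the quantities $\sup_z(1+N|y-z|)^{s}|h(z)|$ and $\sup_z(1+N|z|)^{s}|h(z)|$ are generically infinite; what is finite for large $s$, and what satisfies the triangle-type inequality you quote, is $\sup_z|h(z)|(1+N|x-z|)^{-s}$. More importantly, the point where band-limitedness of $h$ is genuinely consumed is not the finiteness of that supremum but the Plancherel--Polya/Peetre inequality $\sup_z|h(z)|(1+N|x-z|)^{-m/r}\le c\,(\eta_{N,m}\ast|h|^{r}(x))^{1/r}$, which is the entire content of the ``$r$-trick'' for $r<1$; you never prove it or invoke it precisely, and without it there is no passage from $|h|$ to $(\eta_{N,m}\ast|h|^{r})^{1/r}$.

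The more serious issue is the one you yourself flag as ``the main obstacle'': the exponent of $N/R$. Running your scheme honestly (kernel bound, then the Peetre inequality with $a=m/r$) yields the constant $c\max\bigl(1,(N/R)^{m/r}\bigr)$, not $c\max\bigl(1,(N/R)^{m}\bigr)$, and for $r<1$ this loss cannot be removed by bookkeeping: take $n=1$, $h(y)=y^{2K}\bigl(\sin(\varepsilon y)/(\varepsilon y)\bigr)^{2}$, which is nonnegative, band-limited to $[-2\varepsilon,2\varepsilon]$, and grows like $|y|^{\beta}$ with $\beta=2K-2$. If $\beta r<m-n$ then $\eta_{N,m}\ast|h|^{r}(0)$ is finite, while for positive $\theta$ one has $\theta_R\ast h(0)\gtrsim R\int_{|y|\le 1/R}h(y)\,dy\gtrsim R^{-\beta}$ as $R\to0$; since $\beta$ may be taken arbitrarily close to $(m-n)/r$, which exceeds $m$ once $r<1-n/m$, the inequality with the constant $\max\bigl(1,(N/R)^{m}\bigr)$ fails in that regime. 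So the step you defer (``the powers are absorbed'') is not a gap you can close: the provable statement carries $\max\bigl(1,(N/R)^{m/r}\bigr)$, or any exponent at least $(m-n)/r$. This is harmless for the paper, since every application here takes $R=N$ so that the maximum equals $1$, but your proposal should either prove the weaker (true) constant or restrict to $r\ge1$ or $R\ge N$.
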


The proof of this lemma is given in \cite{D6}.

We will make use of the following statement, see \cite{DHHMS}, Lemma 3.3\
and \cite{DHHR}, Theorem 3.2.4.

\begin{theorem}
\label{DHHR-estimate}Let $p\in \mathcal{P}^{\mathrm{log}}$. Then for every $%
m>0$ there exists $\beta \in \left( 0,1\right) $ only depending on $m$ and $%
c_{\mathrm{log}}\left( p\right) $ such that%
\begin{eqnarray*}
&&\Big(\frac{\beta }{\left\vert Q\right\vert }\int_{Q}\left\vert
f(y)\right\vert dy\Big)^{p\left( x\right) } \\
&\leq &\frac{1}{\left\vert Q\right\vert }\int_{Q}\left\vert f(y)\right\vert
^{p\left( y\right) }dy \\
&&+\min \left( \left\vert Q\right\vert ^{m},1\right) \Big(\left(
e+\left\vert x\right\vert \right) ^{-m}+\frac{1}{\left\vert Q\right\vert }%
\int_{Q}\left( e+\left\vert y\right\vert \right) ^{-m}dy\Big),
\end{eqnarray*}%
for every cube $\mathrm{(}$or ball$\mathrm{)}$ $Q\subset \mathbb{R}^{n}$,
all $x\in Q\subset \mathbb{R}^{n}$and all $f\in L^{p\left( \cdot \right)
}+L^{\infty }$\ with $\left\Vert f\right\Vert _{p\left( \cdot \right)
}+\left\Vert f\right\Vert _{\infty }\leq 1$.
\end{theorem}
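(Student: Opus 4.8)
The statement to prove is Theorem~\ref{DHHR-estimate}, a pointwise estimate comparing the power of an average to the average of a power in the variable exponent setting. Since the excerpt explicitly says this result is taken from \cite{DHHMS} and \cite{DHHR}, I will reconstruct the proof that appears in those references.

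\textbf{Strategy.} The plan is to separate the two regimes according to whether the average $\frac{1}{|Q|}\int_Q |f(y)|\,dy$ is large or small, and to reduce everything to Jensen's inequality plus the log-H\"older control on the exponent. First I would write $p_Q^- := \operatorname{ess\,inf}_{y\in Q} p(y)$ and note that by local log-H\"older continuity, for every $x \in Q$ one has $|p(x) - p_Q^-| \lesssim c_{\log}(1/p) \cdot \frac{1}{\log(e+1/l(Q))}$ roughly speaking — more precisely one controls the oscillation of $1/p$ on $Q$, hence of $p$ on $Q$ when $p$ is bounded, and via the decay condition one also controls $|1/p(x) - 1/p_\infty|$ by $(\log(e+|x|))^{-1}$. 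The key quantitative consequence, which I would isolate as the first step, is that $|Q|^{p(x) - p_Q^-} \lesssim 1$ when $|Q| \le 1$ and more generally $t^{p(x)} \le C\, t^{p_Q^-} + (\text{small correction})$ for the relevant range of $t$.

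\textbf{Key steps.} Step~1: bound the exponent oscillation on $Q$ using \eqref{lo-log-Holder} applied to $1/p$, together with boundedness of $1/p$, to get $|Q|^{-|p(x) - p_Q^-|} \le C$ for $|Q| \le 1$ and a matching statement involving $(e+|x|)^{-m}$ coming from the decay condition when $Q$ is far from the origin. Step~2: apply Jensen's inequality with the convex function $t \mapsto t^{p_Q^-}$ (valid since $p_Q^- \ge 1$ when $p \in \mathcal{P}$, or after a harmless normalization in the $\mathcal{P}_0$ setting — here $p \in \mathcal{P}^{\log} \subset \mathcal{P}$) to obtain
\begin{equation*}
\Big(\frac{1}{|Q|}\int_Q |f(y)|\,dy\Big)^{p_Q^-} \le \frac{1}{|Q|}\int_Q |f(y)|^{p_Q^-}\,dy.
\end{equation*}
Step~3: pass from the exponent $p_Q^-$ inside the integral on the right to the variable exponent $p(y)$; since $\|f\|_{p(\cdot)} + \|f\|_\infty \le 1$ forces $|f(y)| \le 1$ a.e. when we only look at the $L^\infty$ part, and more carefully one splits $f = f\chi_{\{|f|\le 1\}} + f\chi_{\{|f|>1\}}$, on the set where $|f|\le 1$ one has $|f(y)|^{p_Q^-} \le |f(y)|^{p(y)}$ up to the correction term (using $p_Q^- \le p(y)$ and $|f| \le 1$, so the smaller exponent gives the larger value, which is the wrong direction — hence this is exactly where the correction term $\min(|Q|^m,1)((e+|x|)^{-m} + \text{avg})$ enters). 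Step~4: reinsert the factor $|Q|^{p(x) - p_Q^-}$ absorbed into $\beta^{p(x)}$ (this is why $\beta \in (0,1)$ depending on $m$ and $c_{\log}(p)$ appears — one chooses $\beta$ small enough that $\beta^{p(x)} \le \beta^{p^-} \cdot (\text{oscillation factor})^{-1}$ dominates the constant $C$ from Step~1), and combine.

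\textbf{Main obstacle.} The technical heart — and the step I expect to be most delicate — is Step~3 combined with Step~4: carefully tracking how the passage from the constant exponent $p_Q^-$ back to $p(y)$ under the sole hypothesis $\|f\|_{p(\cdot)} + \|f\|_\infty \le 1$ generates precisely the correction term $\min(|Q|^m,1)\big((e+|x|)^{-m} + \frac{1}{|Q|}\int_Q (e+|y|)^{-m}\,dy\big)$, and verifying that the choice of $\beta$ can be made uniform (depending only on $m$ and $c_{\log}(p)$, not on $Q$, $x$, or $f$). One handles the near-origin cubes and the far cubes separately: for $|Q| \le 1$ the local log-H\"older estimate gives $|Q|^{-c_{\log}} \le (e+1/l(Q))^{c_{\log}'}$ which is a bounded power, absorbed by $\beta$; for cubes far from the origin the decay condition replaces $\log(e+1/l(Q))$ control by $\log(e+|x|)$ control, producing the $(e+|x|)^{-m}$ term after raising to an appropriate power and using $\min(|Q|^m,1)$ to kill the contribution of large cubes. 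The rest (Jensen, the split of $f$) is routine. Since this is quoted verbatim from \cite{DHHR}, Theorem 3.2.4, I would simply refer the reader there for the full details, sketching the above as the essential mechanism.
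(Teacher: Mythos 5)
The paper contains no proof of this statement: it is imported verbatim from \cite{DHHMS} (Lemma 3.3) and \cite{DHHR} (Theorem 3.2.4), the author's only addition being the remark that the hypothesis may be relaxed to $\varrho_{p(\cdot)}(f)\leq 1$ and/or $\left\Vert f\right\Vert_{\infty}\leq 1$. So there is no in-paper argument to compare with, and your decision to sketch the mechanism and defer to the references is consistent with what the author does. Your sketch does contain the right ingredients: the splitting $f=f\chi_{\{|f|>1\}}+f\chi_{\{|f|\leq 1\}}$, Jensen's inequality, the local log-H\"older control of the oscillation of the exponent on $Q$, the decay condition producing $(e+|x|)^{-m}$, and the absorption of constants of the form $e^{c\,p(x)}$ into $\beta^{p(x)}$. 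The unbounded part is handled correctly in your outline: Jensen with $t\mapsto t^{p_{Q}^{-}}$, then $|f|^{p_{Q}^{-}}\leq|f|^{p(y)}$ because $|f|>1$, then $A:=\frac{1}{|Q|}\int_{Q}|f|^{p(y)}dy\leq|Q|^{-1}$ and $A^{p(x)/p_{Q}^{-}-1}\leq e^{c\,p(x)}$ by log-H\"older continuity of $1/p$.

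However, your Step 3 for the bounded part is a genuine gap. The claim that on $\{|f|\leq 1\}$ one has $|f(y)|^{p_{Q}^{-}}\leq|f(y)|^{p(y)}$ ``up to the correction term'' is not a valid pointwise inequality: for $a<b$ one has $\sup_{t\in[0,1]}(t^{a}-t^{b})\approx(b-a)/b$, which is a fixed constant (e.g.\ $1/4$ for $a=1$, $b=2$) and is certainly not dominated by $\min(|Q|^{m},1)(e+|y|)^{-m}$. Worse, any route through $p_{Q}^{-}$ fails for large cubes: if $Q$ is huge, $p_{Q}^{-}$ may be attained near the origin while $y$ is far away, and then $(p(y)/p_{Q}^{-}-1)\log(e+|y|)$ is unbounded, so no choice of $\beta$ repairs the comparison. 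The actual argument for the bounded part avoids $p_{Q}^{-}$ entirely: one applies Jensen with the convex function $s\mapsto s^{p(x)}$ to get $\big(\beta\frac{1}{|Q|}\int_{Q}|f|\big)^{p(x)}\leq\frac{1}{|Q|}\int_{Q}(\beta|f(y)|)^{p(x)}dy$, and then invokes the pointwise two-variable key lemma
\begin{equation*}
(\beta t)^{p(x)}\leq t^{p(y)}+\min\left(|Q|^{m},1\right)\tfrac{1}{2}\left((e+|x|)^{-m}+(e+|y|)^{-m}\right),\qquad t\in[0,1],\ x,y\in Q,
\end{equation*}
proved by the dichotomy ``either $(\beta t)^{p(x)}$ is already below the correction term, or $t$ is bounded below and the log-H\"older conditions (local one for $|Q|\leq 1$, decay condition for the $(e+|x|)^{-m}$, $(e+|y|)^{-m}$ terms) give $t^{p(x)-p(y)}\leq\beta^{-p(x)}$.'' This two-point comparison of $p(x)$ with $p(y)$, with $\beta$ inserted before the exponents are compared, is the step your sketch is missing; with it the argument closes and is exactly the one in the cited references.
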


Notice that in the proof of this theorem we need only 
\begin{equation*}
\int_{Q}\left\vert f(y)\right\vert ^{p\left( y\right) }dy\leq 1
\end{equation*}%
and/or $\left\Vert f\right\Vert _{\infty }\leq 1$. We set%
\begin{equation*}
\left\Vert \left( f_{v}\right) _{v}\right\Vert _{L_{p(\cdot )}^{\tau (\cdot
)}\left( \ell ^{q(\cdot )}\right) }:=\sup_{P\in \mathcal{Q}}\Big\|\Big(\frac{%
f_{v}}{|P|^{\tau (\cdot )}}\chi _{P}\Big)_{v\geq v_{P}^{+}}\Big\|%
_{L^{p(\cdot )}(\ell ^{q(\cdot )})},
\end{equation*}%
where, $v_{P}=-\log _{2}l(P)$ and $v_{P}^{+}=\max (v_{P},0)$. The following
result is from \cite[Theorem 3.2]{DHR}.

\begin{theorem}
\label{DHR-theorem}Let $p,q\in C_{\mathrm{loc}}^{\log }$ with $1<p^{-}\leq
p^{+}<\infty $ and $1<q^{-}\leq q^{+}<\infty $. Then the inequality%
\begin{equation*}
\left\Vert (\eta _{v,m}\ast f_{v})_{v}\right\Vert _{L^{p(\cdot )}(\ell
^{q(\cdot )})}\leq c\left\Vert (f_{v})_{v}\right\Vert _{L^{p(\cdot )}(\ell
^{q(\cdot )})}.
\end{equation*}%
holds for every sequence $(f_{v})_{v\in \mathbb{N}_{0}}$ of $L_{\mathrm{loc}%
}^{1}$-functions and constant $m>n+c_{\log }(1/q)$.
\end{theorem}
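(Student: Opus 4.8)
The plan is to pass to a modular estimate and then to \emph{freeze} the variable exponents by means of Theorem \ref{DHHR-estimate}. By homogeneity it suffices to treat sequences with $\|(f_{v})_{v}\|_{L^{p(\cdot)}(\ell^{q(\cdot)})}\leq 1$, and, replacing $f_{v}$ by $|f_{v}|$, we may assume $f_{v}\geq 0$. By the unit ball property this normalization is precisely the modular bound $\int_{\mathbb{R}^{n}}\big(\sum_{v}f_{v}(x)^{q(x)}\big)^{p(x)/q(x)}\,dx\leq 1$, and, using the unit ball property once more, the theorem follows once we produce a constant $\delta\in(0,1]$ — depending only on $n$, $m$ and the local log-H\"older data of $p$ and $q$ — such that $\int_{\mathbb{R}^{n}}\big(\sum_{v}(\delta\,\eta_{v,m}\ast f_{v})(x)^{q(x)}\big)^{p(x)/q(x)}\,dx\leq 1$; the theorem then holds with $c=1/\delta$. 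I also record that $f_{v}\leq F^{1/q(\cdot)}$, where $F:=\sum_{v}f_{v}^{q(\cdot)}$ lies in the unit ball of $L^{p(\cdot)/q(\cdot)}$; this is what will supply the normalization needed to invoke Theorem \ref{DHHR-estimate}.

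The first genuine step is a pointwise reduction of the convolution to local averages. Splitting $\eta_{v,m}$ over the dyadic shells $\{\,|x-y|\sim 2^{k-v}\,\}$, $k\geq 0$, and using $m>n$, one gets $\eta_{v,m}\ast f_{v}(x)\leq c\sum_{k\geq 0}2^{-k(m-n)}M_{B(x,2^{k-v})}f_{v}(x)$. Since $q^{-}>1$, H\"older's inequality in the shell index $k$ moves the exponent $q(x)$ inside, at the cost of a slightly smaller geometric weight:
\begin{equation*}
\big(\eta_{v,m}\ast f_{v}(x)\big)^{q(x)}\leq c\sum_{k\geq 0}2^{-\kappa k}\,\big(M_{B(x,2^{k-v})}f_{v}(x)\big)^{q(x)},
\end{equation*}
where $\kappa>0$ is free to be chosen less than $(m-n)q^{-}$.

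The core of the argument is to replace the variable-exponent averages by constant-exponent ones. Applying Theorem \ref{DHHR-estimate} — in the form that needs only local log-H\"older continuity — with the exponent $q$ on each ball $B(x,2^{k-v})$, and supplying the required normalization through a splitting of $f_{v}$ according to the size of $F$ together with the remark following that theorem, one bounds $\big(M_{B(x,2^{k-v})}f_{v}(x)\big)^{q(x)}$ by $c\,M_{B(x,2^{k-v})}\big(f_{v}^{q(\cdot)}\big)(x)$ plus an explicit error of fast polynomial decay, at the price of a factor governed by $c_{\log}(1/q)$ and by the mismatch between the radius $2^{k-v}$ and the natural scale $2^{-v}$. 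After this freezing (and, analogously, after freezing $p/q$ on dyadic cubes and using $M_{B(x,2^{k-v})}f_{v}(x)\leq\mathcal{M}f_{v}(x)$ on each cube) the right-hand side is controlled by the constant-exponent analogue of the quantity we wish to bound, for which the desired inequality is the classical Fefferman--Stein vector-valued maximal estimate; the surviving sum in the shell index $k$ is geometric and converges exactly under the hypothesis $m>n+c_{\log}(1/q)$, while the error terms, being integrable to any power, add only a bounded amount to the modular. Collecting all the constants produces the required $\delta$.

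The step I expect to be the real obstacle is this exponent-freezing in the absence of any decay hypothesis. One is given only $p,q\in C^{\log}_{\mathrm{loc}}$, so the Hardy--Littlewood maximal operator need not be bounded on $L^{p(\cdot)}$, and the localized averages $M_{B(x,2^{k-v})}$ cannot simply be dominated by $\mathcal{M}$; the whole estimate must therefore be carried out with these localized averages and then summed against the tails $\eta_{v,m}$, which are not summable in $v$. It is precisely the surplus $m-n-c_{\log}(1/q)>0$ that makes the resulting double sum (over $v$ and the shell index) converge, and doing the bookkeeping of the error terms coming from Theorem \ref{DHHR-estimate} uniformly in $v$ and in the spatial variable is the delicate part.
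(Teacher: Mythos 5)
First, a point of reference: the paper does not actually prove this theorem — it is quoted from \cite[Theorem 3.2]{DHR} — so I am comparing your proposal with the argument given there. Your opening moves are exactly the right ones and coincide with that proof: the reduction to a modular estimate via the unit ball property, the decomposition of $\eta_{v,m}$ over dyadic shells giving $\eta_{v,m}\ast f_{v}(x)\lesssim\sum_{k\geq 0}2^{-k(m-n)}M_{B(x,2^{k-v})}f_{v}(x)$, H\"older's inequality in the shell index to move $q(x)$ inside, the application of Theorem \ref{DHHR-estimate} with exponent $q$ (with the normalization supplied by splitting $f_{v}$ according to whether $F=\sum_{v}f_{v}^{q(\cdot)}$ exceeds $1$, exactly as the remark after that theorem permits), and the identification of the oscillation of $q$ over large balls as the source of the condition $m>n+c_{\log}(1/q)$.

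The endgame, however, contains a genuine gap. After freezing $q$ you are left with estimating $\|\sum_{v}(\text{averages of }f_{v}^{q(\cdot)})\|_{p(\cdot)/q(\cdot)}$, i.e.\ the inner exponent has become $\ell^{1}$; the classical Fefferman--Stein vector-valued maximal inequality is \emph{false} at the endpoint $q_{0}=1$, so it cannot serve as the ``constant-exponent analogue'' you invoke. Dominating $M_{B(x,2^{k-v})}f_{v}$ by $\mathcal{M}f_{v}$ is also self-defeating: the whole point of working with localized averages (which you yourself make) is that $\mathcal{M}$ is unbounded here, and after the second freezing the estimate lives at the $L^{1}$ level, where $\mathcal{M}$ is again unbounded. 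Finally, ``freezing $p/q$ on each dyadic cube and applying Fefferman--Stein on each cube'' is not a coherent global argument, since the frozen exponent changes from cube to cube and neither $\mathcal{M}$ nor the vector-valued inequality is local. What actually closes the proof in \cite{DHR} is a \emph{second} application of Theorem \ref{DHHR-estimate}, now with exponent $p(\cdot)/q(\cdot)$, which reduces the entire modular to an $L^{1}$ quantity of the form $\int_{\mathbb{R}^{n}}\sum_{v}\eta_{v,m'}\ast g_{v}\,dx$ plus integrable error terms built from $(e+|x|)^{-s}$; there one concludes by Fubini, $\int\eta_{v,m'}\ast g\,dx=\Vert\eta_{v,m'}\Vert_{1}\int g\,dx=c_{m'}\int g\,dx$, uniformly in $v$. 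No maximal-function boundedness enters at any stage. Your argument becomes correct if the Fefferman--Stein step is replaced by this second exponent-freezing followed by Fubini.
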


We will make use of the following statement (we use it, since the maximal
operator is in general not bounded on $L^{p(\cdot )}(\ell ^{q(\cdot )})$,
see \cite[Section 5]{DHR}).

\begin{lemma}
\label{DHR-lemma1}Let $\mathbb{\tau }\in C_{\mathrm{loc}}^{\log }$, $\tau
^{-}>0$, $p,q\in \mathcal{P}^{\log }$\ with\ $0<\frac{\tau ^{+}}{\tau ^{-}}%
<\min \left( p^{-},q^{-}\right) $. For $m$ large enough sucth that 
\begin{equation*}
m>n\tau ^{+}+2n+w,\text{ }w>n+c_{\log }(1/q)+c_{\log }(\tau )
\end{equation*}%
and every sequence $(f_{v})_{v\in \mathbb{N}_{0}}$ of $L_{\mathrm{loc}}^{1}$%
-functions, there exists $c>0$\ such that%
\begin{equation*}
\left\Vert (\eta _{v,m}\ast f_{v})_{v}\right\Vert _{L_{p(\cdot )}^{\tau
(\cdot )}(\ell ^{q(\cdot )})}\leq c\left\Vert (f_{v})_{v}\right\Vert
_{L_{p(\cdot )}^{\tau (\cdot )}(\ell ^{q(\cdot )})}.
\end{equation*}
\end{lemma}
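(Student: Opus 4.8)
The plan is to localize the estimate to a fixed dyadic cube $P\in\mathcal{Q}$ and reduce it, via splitting into a ``near'' part and a ``far'' part, to the non-$\tau$ estimate of Theorem \ref{DHR-theorem}. Fix $P\in\mathcal{Q}$; by homogeneity we may assume $\|(f_v)_v\|_{L_{p(\cdot)}^{\tau(\cdot)}(\ell^{q(\cdot)})}=1$, so that for every dyadic cube $R$ one has $\big\|\big(|R|^{-\tau(\cdot)}f_v\chi_R\big)_{v\geq v_R^+}\big\|_{L^{p(\cdot)}(\ell^{q(\cdot)})}\leq1$. I want to bound $\big\|\big(|P|^{-\tau(\cdot)}(\eta_{v,m}\ast f_v)\chi_P\big)_{v\geq v_P^+}\big\|_{L^{p(\cdot)}(\ell^{q(\cdot)})}$ by a constant. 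First I would dominate $\eta_{v,m}$ by a fixed convergent sum of its $w$-analogues that also control the contribution of distant dyadic cubes: writing $m = (m-w)+w$ and using that for $x\in P$ and $y$ in a dyadic cube $Q$ of the same generation as $P$ the factor $(1+2^v|x-y|)^{-(m-w)}$ is comparable to $2^{-j(m-w)}$ when $Q$ is at ``distance'' roughly $2^{j}$ from $P$, one gets the pointwise bound
\begin{equation*}
\eta_{v,m}\ast f_v(x)\lesssim \sum_{j\geq0}2^{-j(m-w-n)}\,\eta_{v,w}\ast\big(|f_v|\chi_{Q_j(P)}\big)(x),\qquad x\in P,
\end{equation*}
where $Q_j(P)$ is the union of the dyadic cubes of generation $v_P$ lying within distance $\sim2^{j}l(P)$ of $P$ (so $Q_0(P)\supset P$ and $|Q_j(P)|\approx 2^{jn}|P|$). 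This is the standard device that makes $\eta$-convolutions behave well with respect to the $\sup_P$ in the $\tau$-norm.

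Next I would apply Theorem \ref{DHR-theorem} (legitimate since $p,q\in C_{\mathrm{loc}}^{\log}$ and, by the hypothesis $\tau^+/\tau^-<\min(p^-,q^-)$ together with $\tau^->0$, one has $1<p^-\le p^+<\infty$ and $1<q^-\le q^+<\infty$ — or, if not, one first passes to the powered-up exponents $p/t$, $q/t$ with $1<t<\min(p^-,q^-)\cdot\tau^-/\tau^+$, using the $r$-trick in the spirit of Lemma \ref{r-trick} to absorb the power, and rescales at the end). For each fixed $j$ this yields
\begin{equation*}
\Big\|\big(\eta_{v,w}\ast(|f_v|\chi_{Q_j(P)})\,\chi_P\big)_{v\geq v_P^+}\Big\|_{L^{p(\cdot)}(\ell^{q(\cdot)})}\lesssim \Big\|\big(|f_v|\chi_{Q_j(P)}\big)_{v\geq v_P^+}\Big\|_{L^{p(\cdot)}(\ell^{q(\cdot)})}.
\end{equation*}
Now $Q_j(P)$ is a disjoint union of $\approx2^{jn}$ dyadic cubes $R$ of generation $v_P$, each with $|R|=|P|$ and $v_R^+\le v_P^+$, so using the definition of the $\tau$-norm on each such $R$ and the fact that $|P|^{-\tau(\cdot)}=|R|^{-\tau(\cdot)}$ on the support, the right-hand side is at most $|P|^{\tau(x)}$ times a sum over these $2^{jn}$ cubes; here Lemma \ref{DHR-lemma} (applied with $\tau$ in place of $\alpha$, which is why $w>n+c_{\log}(1/q)+c_{\log}(\tau)$ is imposed) lets me pass the varying exponent $|P|^{\tau(\cdot)}$ through the convolution and across cubes at controlled cost $\lesssim 2^{j\cdot(\text{const})}$, and crucially $|P|^{-\tau(x)}\cdot |R|^{\tau(x)}\cdot(\text{number of cubes})\lesssim 2^{jn\tau^+}$ after accounting for side lengths. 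Altogether
\begin{equation*}
\Big\|\big(|P|^{-\tau(\cdot)}(\eta_{v,m}\ast f_v)\chi_P\big)_{v\geq v_P^+}\Big\|_{L^{p(\cdot)}(\ell^{q(\cdot)})}\lesssim \sum_{j\geq0}2^{-j(m-w-n)}\,2^{jn\tau^+}\,2^{j\cdot(\text{const}\le n)},
\end{equation*}
and the series converges precisely because $m>n\tau^++2n+w$. Taking the supremum over $P$ finishes the proof.

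The main obstacle is the bookkeeping in the ``far'' part: one must verify that the decay $2^{-j(m-w-n)}$ coming from the tails of $\eta_{v,m}$ genuinely beats the growth $2^{jn\tau^+}$ coming from the $|R|^{\tau(\cdot)}/|P|^{\tau(\cdot)}$ mismatch (which can be as large as $|P|^{-\tau^+}$ relative to a far cube of the same size, i.e.\ a factor that is harmless in size but whose $\ell^{q(\cdot)}$-summation over $\approx2^{jn}$ cubes costs another $2^{jn}$) \emph{plus} the factor from applying Lemma \ref{DHR-lemma} to move the variable weight, the latter contributing at most a polynomial-in-$2^{j}$ loss of order $n$. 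Getting the exact exponent thresholds $m>n\tau^++2n+w$ and $w>n+c_{\log}(1/q)+c_{\log}(\tau)$ to line up is the delicate point; everything else is a routine combination of Theorem \ref{DHR-theorem}, Lemma \ref{DHR-lemma}, and the unit-ball normalization. I would also double-check that when $p^-$ or $q^-$ fails to exceed $1$ the reduction to powered exponents via the $r$-trick of Lemma \ref{r-trick} is compatible with the $\sup_P$ structure of the $\tau$-norm; this is standard but should be stated explicitly.
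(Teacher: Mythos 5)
Your overall architecture (normalize by homogeneity, split the convolution into a near part and far parts indexed by distance, apply Theorem \ref{DHR-theorem} to each piece, sum a geometric series using $m>n\tau^{+}+2n+w$) matches the skeleton of the paper's proof, and your Case $|P|\leq 1$ is essentially the paper's Case 2. But there is a genuine gap in the case $|P|>1$, which is where all the work in the paper is. You propose to move the variable weight $|P|^{-\tau(\cdot)}$ through the convolution and across cubes via Lemma \ref{DHR-lemma} ``with $\tau$ in place of $\alpha$,'' at a cost $\lesssim 2^{j\cdot(\mathrm{const})}$. That lemma controls $2^{v(\alpha(x)-\alpha(y))}$ by $(1+2^{v}|x-y|)^{c_{\log}(\alpha)}$ only for $v\geq 0$; here the relevant scale is $2^{v_{P}}$ with $v_{P}<0$. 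For $x\in P$ and $y$ in a far cube, $|x-y|\gtrsim 1$, so $\log(e+1/|x-y|)\approx 1$ and the transfer cost is $|P|^{|\tau(x)-\tau(y)|}\lesssim |P|^{c_{\log}(\tau)}$, which is unbounded as $|P|\to\infty$ and is not of the form $2^{j\cdot(\mathrm{const})}$. Since $\tau$ is only locally log-H\"older, no decay condition rescues this.

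The paper circumvents exactly this obstruction by a different mechanism: choose $d$ with $\tau^{+}<d\leq\tau^{-}\min(p^{-},q^{-})$ and raise the averaged quantities to the power $d/\tau(x)$, so that $\bigl(|P|^{-\tau(x)}A\bigr)^{d/\tau(x)}=|P|^{-d}A^{d/\tau(x)}$ has a \emph{constant} weight exponent; then Theorem \ref{DHHR-estimate} (whose applicability is guaranteed by the unit-ball normalization of $(f_{v})_{v}$) commutes the variable power $d/\tau(\cdot)$ with the averages $M_{Q}$ up to harmless error terms, a H\"older step with the auxiliary exponent $t(\cdot)$ defined by $\frac{1}{d}=\frac{1}{p(\cdot)\tau(\cdot)}+\frac{1}{t(\cdot)}$ separates the weight, and only then is Theorem \ref{DHR-theorem} applied in $L^{p(\cdot)\tau(\cdot)/d}$ (which is a legitimate exponent precisely because $d\leq\tau^{-}\min(p^{-},q^{-})$). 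Your proposal never invokes Theorem \ref{DHHR-estimate} and uses the hypothesis $\tau^{+}/\tau^{-}<\min(p^{-},q^{-})$ only to note $p^{-},q^{-}>1$; since that hypothesis is exactly what makes the choice of $d$ possible, its absence from your argument is the symptom of the missing idea. (A minor additional point: your fallback of ``powering up'' via the $r$-trick of Lemma \ref{r-trick} is not available here, since that lemma requires band-limited functions, whereas $(f_{v})_{v}$ are arbitrary $L^{1}_{\mathrm{loc}}$ functions; fortunately it is also unnecessary, as $p^{-},q^{-}>1$ already follows from the hypotheses.)
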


The proof of this lemma is postponed to the Appendix.

Let $\widetilde{L_{\tau (\cdot )}^{p(\cdot )}}$ be the collection of
functions $f\in L_{\text{loc}}^{p(\cdot )}(\mathbb{R}^{n})$ such that%
\begin{equation*}
\left\Vert f\right\Vert _{\widetilde{L_{\tau (\cdot )}^{p(\cdot )}}}:=\sup %
\Big\|\frac{f\chi _{P}}{|P|^{\tau (\cdot )}}\Big\|_{p(\cdot )}<\infty ,\quad
p\in \mathcal{P}_{0},\quad \tau :\mathbb{R}^{n}\rightarrow \mathbb{R}^{+},
\end{equation*}%
where the supremum is taken over all dyadic cubes $P$ with $|P|\geq 1$.
Notice that 
\begin{equation}
\left\Vert f\right\Vert _{\widetilde{L_{\tau (\cdot )}^{p(\cdot )}}}\leq
1\Leftrightarrow \sup_{P\in \mathcal{Q},|P|\geq 1}\Big\|\Big|\frac{f}{%
|P|^{\tau (\cdot )}}\Big|^{q(\cdot )}\chi _{P}\Big\|_{p(\cdot )/q(\cdot
)}\leq 1.  \label{mod-est}
\end{equation}

Let $\theta _{v}=2^{vn}\theta \left( 2^{v}\cdot \right) $. The following
lemma is from \cite{D7}.

\begin{lemma}
\label{key-estimate1}Let $v\in \mathbb{Z}$, $\mathbb{\tau }\in C_{\mathrm{loc%
}}^{\log }$, $\tau ^{-}>0$, $p\in \mathcal{P}_{0}^{\log }$, $0<r<p^{-}$ and $%
\theta ,\omega \in \mathcal{S}(\mathbb{R}^{n})$ with $\mathrm{supp}\mathcal{F%
}\omega \subset \overline{B(0,1)}$. For any $f\in \mathcal{S}^{\prime }(%
\mathbb{R}^{n})$ and any dyadic cube $P$ with $|P|\geq 1$, we have%
\begin{equation*}
\Big\|\frac{\theta _{v}\ast \omega _{v}\ast f}{|P|^{\tau (\cdot )}}\chi _{P}%
\Big\|_{p(\cdot )}\leq c\left\Vert \omega _{v}\ast f\right\Vert _{\widetilde{%
L_{\tau (\cdot )}^{p(\cdot )}}},
\end{equation*}%
such that the right-hand side is finite, where $c>0$ is independent of $v$
and $l(P).$
\end{lemma}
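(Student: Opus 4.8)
The plan is to reduce the estimate to a pointwise bound on $\theta_v \ast \omega_v \ast f$ via the $r$-trick (Lemma \ref{r-trick}), and then to handle the resulting maximal-type average using Theorem \ref{DHHR-estimate} together with the characterization \eqref{mod-est} of the unit ball in $\widetilde{L_{\tau(\cdot)}^{p(\cdot)}}$. By homogeneity we may assume $\left\Vert \omega_v \ast f\right\Vert_{\widetilde{L_{\tau(\cdot)}^{p(\cdot)}}} \le 1$ and it suffices to show $\big\|\,|P|^{-\tau(\cdot)}(\theta_v \ast \omega_v \ast f)\chi_P\,\big\|_{p(\cdot)} \le c$ for every dyadic $P$ with $|P|\ge 1$. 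Write $g = \omega_v \ast f$. Since $\mathrm{supp}\,\mathcal{F}\omega \subset \overline{B(0,1)}$, the function $g$ has Fourier support in $\overline{B(0,2^v)}$, so the convolution structure of Lemma \ref{r-trick} applies with $R = N = 2^v$ (after rescaling $\theta,\omega$ to dilation $2^v$), giving $\max(1,(N/R)^m) = 1$. Thus pointwise
\begin{equation*}
|\theta_v \ast \omega_v \ast f(x)| \le c\,\big(\eta_{v,m}\ast |g|^r(x)\big)^{1/r},
\end{equation*}
for any fixed $r$ with $0<r<p^-$ and $m>n$ (choosing $m$ as large as needed below).

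Next I would insert this pointwise bound and estimate the modular. Fix $\mu$ a bound for the $p(\cdot)$-quasinorm we are after; by the unit ball property it is enough to bound $\varrho_{p(\cdot)}\big(|P|^{-\tau(\cdot)}(\eta_{v,m}\ast |g|^r)^{1/r}\chi_P\big)$ by a constant. Writing $\eta_{v,m}\ast |g|^r(x) = c_m \sum_{k} 2^{vn}(1+2^v|x-y|)^{-m}$-weighted average, one discretizes: $\eta_{v,m}\ast |g|^r(x) \approx \sum_{j\ge 0} 2^{-jm} 2^{vn}\int_{2^{-v}(j-1)\le |x-y|< 2^{-v}j}|g(y)|^r dy$, i.e. an average of $|g|^r$ over balls $B_j$ centered at $x$ of radius $\sim 2^{-v}(j+1)$, with geometrically decaying weights $2^{j(n-m)}$. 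For each such ball $B_j$, I apply Theorem \ref{DHHR-estimate} to the function $|g|^r$ after normalizing: since $\big\| \,|P|^{-\tau(\cdot)}|g|\chi_P\,\big\|_{p(\cdot)} \le 1$ we have, using \eqref{mod-est} and the relation $p(\cdot)/r$ versus $\tau(\cdot)$, that $|g|^r$ restricted to dilates of $P$ satisfies the hypothesis of Theorem \ref{DHHR-estimate} with exponent $p(\cdot)/r$. This converts $\big(M_{B_j}|g|^r\big)^{p(x)/r}$ into $M_{B_j}\big(|g|^{r}\cdot(\text{correctly scaled})\big)^{p(y)/r}$ plus harmless decay terms $\min(|B_j|^m,1)\big((e+|x|)^{-m} + M_{B_j}(e+|\cdot|)^{-m}\big)$.

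The heart of the argument is bookkeeping the cube $P$ and the weight $|P|^{-\tau(x)p(x)}$ against the variable-exponent average. Since $|P|\ge 1$, one splits: if $B_j \subset c'P$ for a fixed dilate, the $\tau(\cdot)$-weight is essentially constant on $B_j$ up to log-H\"older oscillation (absorbed, as in the remark following Lemma \ref{DHR-lemma}, by enlarging $m$), and the normalization $\big\| |P|^{-\tau(\cdot)}|g|\chi_P\big\|_{p(\cdot)}\le 1$ directly controls $\int_P (|P|^{-\tau(y)}|g(y)|)^{p(y)}dy \le 1$; if $B_j$ is not contained in $c'P$ then $2^{-v}(j+1) \gtrsim l(P) \ge 1$, so $j$ is large, and the factor $2^{j(n-m)}$ has enough room (this is exactly why one needs $m$ large, larger than $n\tau^+ + 2n + w$ type thresholds as in Lemma \ref{DHR-lemma1}) to beat the growth $|P|^{\tau(x)} \le (2^{-v}(j+1))^{n\tau^+}$ coming from covering $P$ by $O((2^v l(P))^n)$ balls of radius $2^{-v}$ and the $\ell^{q}$-type summation. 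Summing the geometric series in $j$ and integrating the decay terms $(e+|x|)^{-m}$ over $P$ (finite since $m>n$) gives the constant bound. I expect this last step---propagating the $|P|\ge 1$ normalization through the off-diagonal balls $B_j$ while keeping the $\tau(\cdot)$ weight and the variable exponent under control---to be the main obstacle, and it is handled precisely by taking $m$ sufficiently large relative to $n$, $\tau^+$, $c_{\log}(\tau)$ and $c_{\log}(1/p)$; the variable-smoothness-free reductions are then routine via Lemma \ref{DHR-lemma} and Theorem \ref{DHHR-estimate}.
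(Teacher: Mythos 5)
A preliminary remark: the paper does not prove this lemma at all --- it is imported from the preprint \cite{D7} --- so the only arguments to measure your proposal against are the closely analogous Case~1 ($|P|>1$) portions of the Appendix proofs of Lemmas \ref{DHR-lemma1} and \ref{lamda-equi}. Your skeleton agrees with those: the reduction via Lemma \ref{r-trick} with $R=N=2^{v}$ to the pointwise bound $|\theta_{v}\ast\omega_{v}\ast f(x)|\leq c\,(\eta_{v,m}\ast|\omega_{v}\ast f|^{r}(x))^{1/r}$ is correct, the decomposition of the convolution into a near part and far annuli with weights $2^{j(n-m)}$ matches the paper's decomposition over the cubes $Q_{v}+kl(Q_{v})$, and the idea of enclosing each far piece in a cube $Q(c_{P},|k|2^{1-v_{P}})$ of volume $\geq 1$ so that the $\widetilde{L_{\tau(\cdot)}^{p(\cdot)}}$-normalization applies is the right one.

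There is, however, a genuine gap at the step you yourself identify as the heart of the matter. You claim that on each ball the weight $|P|^{-\tau(\cdot)}$ is ``essentially constant up to log-H\"older oscillation, absorbed by enlarging $m$.'' For $|P|\geq 1$ this is false. Local log-H\"older continuity only yields $|\tau(x)-\tau(y)|\leq c_{\log}(\tau)/\log(e+1/|x-y|)\leq c_{\log}(\tau)$ once $|x-y|\gtrsim 1$, so $\sup_{x,y\in P}|P|^{|\tau(x)-\tau(y)|}$ can be as large as $|P|^{c_{\log}(\tau)}$, which is unbounded as $l(P)\to\infty$ and does not depend on $m$ at all; moreover the kernel decay $(1+2^{v}|x-y|)^{-m}$ lives at scale $2^{-v}$, not $l(P)$, so the inequality $|P|^{-\tau(x)}\leq c\,|P|^{-\tau(y)}(1+2^{v}|x-y|)^{c_{\log}(\tau)}$ --- which the Appendix does use, but only in the case $|P|\leq 1$, where $v\geq v_{P}\geq 0$ --- simply fails when $v_{P}<0$. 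Hence you can neither pull $|P|^{-\tau(x)}$ inside the average $M_{B_j}|g|^{r}$ nor compare $|P|^{-\tau(x)p(x)}\bigl(M_{B_j}|g|^{r}\bigr)^{p(x)/r}$ with the quantity $\int\bigl(|Q|^{-\tau(y)}|g(y)|\bigr)^{p(y)}dy$ that the normalization actually controls. The device that closes this gap in the paper's companion proofs is to fix a constant $d$ with $\tau^{+}<d<\tau^{-}p^{-}/r$ (shrinking $r$ if necessary, which Lemma \ref{r-trick} permits) and rewrite
\begin{equation*}
\Bigl(\frac{A}{|P|^{\tau(x)}}\Bigr)^{p(x)}=\Bigl(\frac{A^{d/\tau(x)}}{|P|^{d}}\Bigr)^{p(x)\tau(x)/d},
\end{equation*}
so that the exponent of $|P|$ becomes the constant $d$ and never oscillates; Theorem \ref{DHHR-estimate} is then applied at the exponent $p(\cdot)\tau(\cdot)/d$ to $|g|^{d/\tau(\cdot)}$, its hypothesis being exactly the modular form of $\|\omega_{v}\ast f\|_{\widetilde{L_{\tau(\cdot)}^{p(\cdot)}}}\leq 1$ over the enclosing cube of volume $\geq 1$, and H\"older's inequality with $1/d=1/(p(\cdot)\tau(\cdot))+1/t(\cdot)$ distributes the $|k|$-powers. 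This is also where the hypothesis $\tau^{-}>0$ enters. Without this (or an equivalent) device, your bookkeeping step does not close.
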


The next three lemmas are from \cite{DHR} where the first tells us that in
most circumstances two convolutions are as good as one.

\begin{lemma}
\label{Conv-est}For $v_{0},v_{1}\in \mathbb{N}_{0}$ and $m>n$, we have%
\begin{equation*}
\eta _{v_{0},m}\ast \eta _{v_{1},m}\approx \eta _{\min (v_{0},v_{1}),m}
\end{equation*}%
with the constant depending only on $m$ and $n$.
\end{lemma}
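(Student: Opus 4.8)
The plan is to reduce the claimed equivalence $\eta_{v_0,m}\ast\eta_{v_1,m}\approx\eta_{\min(v_0,v_1),m}$ to two elementary estimates. By symmetry of the statement in $v_0,v_1$ we may assume $v_0\le v_1$, so that $\min(v_0,v_1)=v_0$ and we must show $\eta_{v_0,m}\ast\eta_{v_1,m}\approx\eta_{v_0,m}$. The upper bound $\eta_{v_0,m}\ast\eta_{v_1,m}\lesssim\eta_{v_0,m}$ is the substantive half; the lower bound is easy and I would dispose of it first.

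For the lower bound, I would simply note that $\eta_{v_1,m}$ has $L^1$-norm $c_m$ independent of $v_1$ (as recalled in the preliminaries), and that $\eta_{v_0,m}$ is, up to a constant, ``almost decreasing'' at the scale $2^{-v_0}$: more precisely, for $|x-y|\le 2^{-v_1}\le 2^{-v_0}$ one has $1+2^{v_0}|y|\le 2+2^{v_0}|x|\lesssim 1+2^{v_0}|x|$, hence $\eta_{v_0,m}(y)\gtrsim\eta_{v_0,m}(x)$ on that region. Integrating $\eta_{v_1,m}(x-y)$ against this over the ball $B(x,2^{-v_1})$, where $\eta_{v_1,m}\gtrsim 2^{v_1 n}$, gives $(\eta_{v_0,m}\ast\eta_{v_1,m})(x)\gtrsim\eta_{v_0,m}(x)$.

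For the upper bound, I would split the convolution integral $\int_{\re^n}\eta_{v_0,m}(y)\,\eta_{v_1,m}(x-y)\,dy$ according to whether $|y|\le|x|/2$ or $|x-y|\le|x|/2$ (these two regions cover $\re^n$). On the first region $|x-y|\ge|x|/2$, so $\eta_{v_1,m}(x-y)\lesssim\eta_{v_1,m}(x/2)\approx\eta_{v_1,m}(x)\lesssim 2^{(v_1-v_0)n}\eta_{v_0,m}(x)\lesssim\eta_{v_0,m}(x)$ using $v_1\ge v_0$ together with $2^{v_1 n}(1+2^{v_1}|x|)^{-m}\le 2^{v_1 n}(1+2^{v_0}|x|)^{-m}\cdot(2^{v_0}/2^{v_1})^0$... — more cleanly, $\eta_{v_1,m}(x)\le 2^{(v_1-v_0)n}2^{v_0 n}(1+2^{v_0}|x|)^{-m}\big(\tfrac{1+2^{v_0}|x|}{1+2^{v_1}|x|}\big)^{m}$ and the last factor is $\le 2^{(v_0-v_1)m}\le 1$ once $m\ge n$, so $\eta_{v_1,m}(x)\lesssim\eta_{v_0,m}(x)$; then pull this factor out and bound $\int\eta_{v_0,m}(y)\,dy=c_m$. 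On the second region $|y|\ge|x|/2$, symmetrically $\eta_{v_0,m}(y)\lesssim\eta_{v_0,m}(x)$, pull it out and bound $\int\eta_{v_1,m}(x-y)\,dy=c_m$. Summing the two contributions yields $(\eta_{v_0,m}\ast\eta_{v_1,m})(x)\lesssim\eta_{v_0,m}(x)$.

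The only point requiring a little care — and the one I would single out as the main obstacle — is the scaling comparison $\eta_{v_1,m}(x)\lesssim\eta_{v_0,m}(x)$ for $v_1\ge v_0$, i.e. controlling the factor $2^{(v_1-v_0)n}\big(\tfrac{1+2^{v_0}|x|}{1+2^{v_1}|x|}\big)^{m}$ uniformly; this is where the hypothesis $m>n$ is used (so that the decay in the denominator beats the $2^{(v_1-v_0)n}$ growth), and one should check it separately in the regimes $2^{v_1}|x|\le 1$ and $2^{v_1}|x|\ge 1$. All constants produced depend only on $m$ and $n$, as claimed.
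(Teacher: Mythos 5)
The paper does not give its own proof of this lemma (it is quoted from \cite{DHR}), so your argument has to stand on its own. Your lower bound is correct, and so is your treatment of the second region: the parenthetical claim that $\{|y|\le|x|/2\}$ and $\{|x-y|\le|x|/2\}$ cover $\mathbb{R}^{n}$ is false (take $y$ far from both $0$ and $x$), but this is harmless, since the only property you use on the second region is $|y|\ge|x|/2$, which holds on the entire complement of the first region; just define the second region to be that complement.

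The genuine gap is the pointwise comparison $\eta_{v_1,m}(x)\lesssim\eta_{v_0,m}(x)$ for $v_1\ge v_0$, precisely the step you single out as the main obstacle and on which your first-region estimate rests. It is false: at $x=0$ the ratio $\eta_{v_1,m}(0)/\eta_{v_0,m}(0)$ equals $2^{(v_1-v_0)n}$, which is unbounded. Your justification contains an algebraic slip: the factor $\bigl(\tfrac{1+2^{v_0}|x|}{1+2^{v_1}|x|}\bigr)^{m}$ satisfies $2^{(v_0-v_1)m}\le\cdot\le 1$, equal to $1$ at $x=0$ and decreasing to $2^{(v_0-v_1)m}$ only as $|x|\to\infty$; the inequality $\tfrac{1+2^{v_0}|x|}{1+2^{v_1}|x|}\le 2^{v_0-v_1}$ you assert would force $2^{v_1}\le 2^{v_0}$. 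The comparison does hold, with constant $2^{m}$, when $2^{v_0}|x|\ge 1$: there $1+2^{v_1}|x|\ge 2^{v_1}|x|$ and $1+2^{v_0}|x|\le 2\cdot 2^{v_0}|x|$ give a ratio at most $2^{m}2^{(v_1-v_0)(n-m)}\le 2^{m}$, and this is indeed where $m>n$ enters; that rescues your first region for such $x$. For $2^{v_0}|x|\le 1$ you need a different (and easier) argument, which you have not supplied: there $\eta_{v_0,m}(x)\ge 2^{-m}2^{v_0 n}$, so the trivial bound $\eta_{v_0,m}\ast\eta_{v_1,m}(x)\le\Vert\eta_{v_0,m}\Vert_{\infty}\Vert\eta_{v_1,m}\Vert_{1}=c_{m}2^{v_0 n}$ already gives the full upper bound. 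With that case split inserted the proof closes; as written, the delicate step is simply wrong near the origin.
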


\begin{lemma}
\label{Conv-est1}Let $v\in\mathbb{N}_{0}$ and $m>n$. Then for any $Q\in 
\mathcal{Q}$ with $l(Q) =2^{-v}$, $y\in Q$ and $x\in \mathbb{R}^{n}$, we have%
\begin{equation*}
\eta _{v,m}\ast \left( \frac{\chi _{Q}}{|Q|}\right) (x)\approx \eta
_{v,m}(x-y)
\end{equation*}%
with the constant depending only on $m$ and $n$.
\end{lemma}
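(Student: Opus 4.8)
The plan is to compute both sides explicitly and compare, exploiting the fact that a cube $Q$ with $l(Q)=2^{-v}$ has diameter comparable to $2^{-v}$, so that for $y\in Q$ the quantity $|x-y|$ is comparable to $2^{-v}+\mathrm{dist}(x,Q)$, uniformly in the choice of $y\in Q$. First I would write out the left-hand side:
\begin{equation*}
\eta_{v,m}\ast\Big(\frac{\chi_Q}{|Q|}\Big)(x)=\frac{1}{|Q|}\int_Q \eta_{v,m}(x-z)\,dz=\frac{2^{vn}}{|Q|}\int_Q\big(1+2^{v}|x-z|\big)^{-m}\,dz.
\end{equation*}
Since $|Q|=2^{-vn}$, the prefactor $2^{vn}/|Q|=2^{2vn}$, and the integral is over a set of measure $2^{-vn}$, so the whole expression equals the average of $2^{vn}(1+2^v|x-z|)^{-m}=\eta_{v,m}(x-z)$ over $z\in Q$.

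The core estimate is then the two-sided bound
\begin{equation*}
\eta_{v,m}(x-z)\approx\eta_{v,m}(x-y)\qquad\text{for all }y,z\in Q,
\end{equation*}
with constants depending only on $m$ and $n$. This follows because for $y,z\in Q$ we have $|y-z|\le\sqrt{n}\,2^{-v}$, hence by the triangle inequality $1+2^v|x-z|\le 1+2^v|x-y|+2^v|y-z|\le(1+\sqrt n)(1+2^v|x-y|)$ and symmetrically in the other direction; raising to the power $-m$ (and noting $m>0$) gives the comparability with constant $(1+\sqrt n)^{m}$. Averaging this over $z\in Q$ — which does not affect a quantity that is already pinched between two multiples of $\eta_{v,m}(x-y)$ — yields
\begin{equation*}
\eta_{v,m}\ast\Big(\frac{\chi_Q}{|Q|}\Big)(x)\approx\eta_{v,m}(x-y),
\end{equation*}
which is the claim.

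There is essentially no obstacle here; the only point requiring a little care is keeping the constants uniform in $v$, $Q$, $x$ and $y$, which is automatic since the dilation structure of $\eta_{v,m}$ and the scaling $l(Q)=2^{-v}$ are matched, so after the substitution $w=2^v(x-z)$ everything reduces to the scale-invariant statement about $(1+|\cdot|)^{-m}$ on a cube of unit side length. I would also remark that the hypothesis $m>n$ is not actually needed for this pointwise comparison — it is only there so that $\eta_{v,m}\in L^1$ and the convolution makes classical sense — but I keep it to match the statement.
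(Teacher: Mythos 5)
Your proof is correct and is the standard argument; the paper itself does not prove this lemma (it is quoted from the reference [DHR]), and your computation — writing the left-hand side as the average of $\eta_{v,m}(x-z)$ over $z\in Q$ and using $|y-z|\le\sqrt{n}\,2^{-v}$ together with the triangle inequality to get $1+2^{v}|x-z|\approx 1+2^{v}|x-y|$ with constant $(1+\sqrt{n})^{m}$ — is exactly how it is established there. Your side remark that $m>n$ is not needed for this pointwise comparison is also accurate.
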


\begin{lemma}
\label{Conv-est2}Let $v,j\in \mathbb{N}_{0}$, $r\in (0,1]$ and $m>\frac{n}{r}
$. Then for any $Q\in \mathcal{Q}$ with $l(Q)=2^{-v}$, we have%
\begin{equation*}
(\eta _{j,m}\ast \eta _{v,m}\ast \chi _{Q})^{r}\approx
2^{(v-j)^{+}n(1-r)}\eta _{j,mr}\ast \eta _{v,mr}\ast \chi _{Q},
\end{equation*}%
where the constant depends only on $m$, $n$ and $r$.
\end{lemma}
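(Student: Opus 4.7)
The plan is to reduce both sides of the claimed equivalence to pointwise expressions in $x - c_{Q}$ and then to match the resulting powers of two.

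First, the hypothesis $m > n/r$ together with $r \in (0,1]$ yields both $m > n$ and $mr > n$, so Lemma \ref{Conv-est} applies at either exponent: setting $k := \min(j,v)$, we have $\eta_{j,m} \ast \eta_{v,m} \approx \eta_{k,m}$ and $\eta_{j,mr} \ast \eta_{v,mr} \approx \eta_{k,mr}$. Taking $r$-th powers on the left (which preserves the equivalence up to a multiplicative constant depending only on $r$), the assertion reduces to
\[(\eta_{k,m}\ast \chi_{Q})^{r}\approx 2^{(v-j)^{+}n(1-r)}\,\eta_{k,mr}\ast \chi_{Q}.\]

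Second, I establish a pointwise estimate for $\eta_{k,\mu}\ast \chi_{Q}$ that mildly extends Lemma \ref{Conv-est1} to the mismatched scales $k \leq v$. For every $y \in Q$ one has $|y-c_{Q}|\leq \sqrt{n}\,2^{-v-1}\leq \sqrt{n}\,2^{-k-1}$, hence $\bigl|\,2^{k}|x-y|-2^{k}|x-c_{Q}|\,\bigr|\leq \sqrt{n}/2$ uniformly in $x$, which forces $\eta_{k,\mu}(x-y)\approx \eta_{k,\mu}(x-c_{Q})$ uniformly in $y\in Q$. Integrating against $\chi_{Q}$ gives
\[\eta_{k,\mu}\ast \chi_{Q}(x)\approx |Q|\,\eta_{k,\mu}(x-c_{Q})=2^{-vn}\,\eta_{k,\mu}(x-c_{Q})\qquad (\mu\in\{m,\,mr\}),\]
which coincides with Lemma \ref{Conv-est1} in the case $k=v$.

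Third, raising the $\mu = m$ case to the $r$-th power and using the identity $\eta_{k,m}(z)^{r}=2^{-kn(1-r)}\,\eta_{k,mr}(z)$ (obtained directly from $\eta_{k,m}(z)=2^{kn}(1+2^{k}|z|)^{-m}$), one obtains
\[\bigl(\eta_{k,m}\ast \chi_{Q}(x)\bigr)^{r}\approx 2^{-vnr-kn(1-r)}\,\eta_{k,mr}(x-c_{Q}),\]
while the right-hand side of the target equivalence equals $2^{(v-j)^{+}n(1-r)-vn}\,\eta_{k,mr}(x-c_{Q})$ by the second step. The proof then closes by checking $v-k=(v-j)^{+}$: if $j\leq v$ then $k=j$ and both sides equal $v-j$, while if $j>v$ then $k=v$ and both sides vanish.

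The main obstacle is bookkeeping rather than conceptual: one must recognize that Lemma \ref{Conv-est1} needs to be extended to scales $k \leq v$ (where $\eta_{k,\mu}$ is essentially constant on $Q$), and then track three sources of powers of two, namely the factor $|Q|^{r}=2^{-vnr}$ from the volume of $Q$, the factor $2^{-kn(1-r)}$ from the behavior of $\eta_{k,m}$ under raising to the $r$-th power, and the target factor $2^{(v-j)^{+}n(1-r)}$.
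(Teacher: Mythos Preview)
Your proof is correct. The reduction via Lemma~\ref{Conv-est} to $\eta_{k,\mu}$ with $k=\min(j,v)$, the extension of Lemma~\ref{Conv-est1} to the mismatched scales $k\le v$, and the identity $\eta_{k,m}^{\,r}=2^{-kn(1-r)}\eta_{k,mr}$ together yield the claimed equivalence; the exponent bookkeeping $v-k=(v-j)^{+}$ is exactly right.

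Note that the paper does not actually prove this lemma: it is stated (together with Lemmas~\ref{Conv-est} and~\ref{Conv-est1}) as a quotation from \cite{DHR}, so there is no in-paper argument to compare against. Your approach is the natural one given the two preceding lemmas and is essentially how the result is obtained in the cited source.
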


The next lemma is a Hardy-type inequality which is easy to prove.

\begin{lemma}
\label{lq-inequality}\textit{Let }$0<a<1,J\in \mathbb{Z}$\textit{\ and }$%
0<q\leq \infty $\textit{. Let }$(\varepsilon _{k})_{k}$\textit{\ be a
sequences of positive real numbers and denote} $\delta
_{k}=\sum_{j=J^{+}}^{k}a^{k-j}\varepsilon _{j}$, $k\geq J^{+}$.\textit{\ }%
Then there exists constant $c>0\ $\textit{depending only on }$a$\textit{\
and }$q$ such that%
\begin{equation*}
\Big(\sum\limits_{k=J^{+}}^{\infty }\delta _{k}^{q}\Big)^{1/q}\leq c\text{ }%
\Big(\sum\limits_{k=J^{+}}^{\infty }\varepsilon _{k}^{q}\Big)^{1/q}.
\end{equation*}
\end{lemma}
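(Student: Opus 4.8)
The statement to prove is Lemma~\ref{lq-inequality}, a standard Hardy-type inequality for sequences: with $0<a<1$, $\delta_k=\sum_{j=J^+}^k a^{k-j}\varepsilon_j$, one has $\big(\sum_{k\ge J^+}\delta_k^q\big)^{1/q}\lesssim\big(\sum_{k\ge J^+}\varepsilon_k^q\big)^{1/q}$ with constant depending only on $a$ and $q$.

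=== PROOF PROPOSAL ===

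The plan is to split into the two regimes $q\ge 1$ and $0<q<1$, which require slightly different tools but the same underlying idea: the geometric weights $a^{k-j}$ are summable, so convolution with them is bounded on $\ell^q$. Throughout I may assume $J^+=0$ after relabeling the index, and I write $b:=\sum_{i=0}^\infty a^i=1/(1-a)<\infty$.

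First, suppose $q\ge 1$. I would simply apply Minkowski's inequality in $\ell^q$ to the representation $\delta_k=\sum_{i=0}^{k} a^{i}\varepsilon_{k-i}$ (reindexing $i=k-j$ and extending $\varepsilon_\ell:=0$ for $\ell<0$), getting
\begin{equation*}
\Big(\sum_{k=0}^\infty\delta_k^q\Big)^{1/q}=\Big(\sum_{k=0}^\infty\Big(\sum_{i=0}^\infty a^i\varepsilon_{k-i}\Big)^q\Big)^{1/q}\le\sum_{i=0}^\infty a^i\Big(\sum_{k=0}^\infty\varepsilon_{k-i}^q\Big)^{1/q}\le\frac{1}{1-a}\Big(\sum_{k=0}^\infty\varepsilon_k^q\Big)^{1/q},
\end{equation*}
which is the claim with $c=1/(1-a)$. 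For the case $0<q<1$ I would instead use the elementary inequality $\big(\sum t_i\big)^q\le\sum t_i^q$ valid for nonnegative $t_i$ when $q\le 1$, applied to $t_i=a^i\varepsilon_{k-i}$, to get $\delta_k^q\le\sum_{i=0}^\infty a^{iq}\varepsilon_{k-i}^q$. Summing over $k$ and interchanging the (nonnegative) sums yields $\sum_{k=0}^\infty\delta_k^q\le\big(\sum_{i=0}^\infty a^{iq}\big)\big(\sum_{k=0}^\infty\varepsilon_k^q\big)=\frac{1}{1-a^q}\sum_{k=0}^\infty\varepsilon_k^q$, so the claim holds with $c=(1-a^q)^{-1/q}$. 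The case $q=\infty$ is immediate since $\delta_k\le b\sup_j\varepsilon_j$.

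There is essentially no obstacle here; the only points requiring a modicum of care are the reindexing $j\mapsto k-j$ together with the convention $\varepsilon_\ell=0$ for $\ell<0$ (so that the inner sums become genuine convolutions over all of $\mathbb{N}_0$), and the justification of interchanging the order of summation, which is legitimate because every term is nonnegative (Tonelli). One then combines the two constants by taking $c=\max\big((1-a)^{-1},(1-a^q)^{-1/q}\big)$, which depends only on $a$ and $q$ as asserted.
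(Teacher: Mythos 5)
Your proof is correct: the paper omits the argument entirely (it only remarks that the lemma "is easy to prove"), and your treatment — Minkowski's inequality in $\ell^q$ for $q\ge 1$, the subadditivity $(\sum t_i)^q\le\sum t_i^q$ for $0<q<1$, and the trivial bound for $q=\infty$, each giving a constant depending only on $a$ and $q$ — is exactly the standard argument the author has in mind. No gaps; the reindexing, the zero-extension convention, and the Tonelli justification are all handled properly.
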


\begin{lemma}
\label{Key-lemma}Let $\alpha ,\mathbb{\tau }\in C_{\mathrm{loc}}^{\log }$, $%
\mathbb{\tau }^{-}\geq 0$ and $p,q\in \mathcal{P}_{0}^{\log }$ with $%
0<q^{-}\leq q^{+}<\infty $. Let $(f_{k})_{k\in \mathbb{N}_{0}}$ be a
sequence of measurable functions on $\mathbb{R}^{n}$. For all $v\in \mathbb{N%
}_{0}$ and $x\in \mathbb{R}^{n}$, let $g_{v}(x)=\sum_{k=0}^{\infty
}2^{-|k-v|\delta }f_{k}(x)$. Then there exists a positive constant $c$,
independent of $(f_{k})_{k\in \mathbb{N}_{0}}$ such that%
\begin{equation*}
\left\Vert (g_{v})_{v}\right\Vert _{L_{p(\cdot )}^{\tau (\cdot )}\left( \ell
^{q(\cdot )}\right) }\leq c\left\Vert (f_{v})_{v}\right\Vert _{L_{p(\cdot
)}^{\tau (\cdot )}(\ell ^{q(\cdot )})},\quad \delta >\tau ^{+}.
\end{equation*}
\end{lemma}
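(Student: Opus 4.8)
\textbf{Proof plan for Lemma \ref{Key-lemma}.}
The plan is to reduce the doubly-infinite sum defining $g_v$ to a form controlled by Lemma \ref{lq-inequality}, after first absorbing the variable smoothness and the $\tau$-weight into a single $L_{p(\cdot)}^{\tau(\cdot)}(\ell^{q(\cdot)})$-estimate. First I would split $g_v = \sum_{k\le v} 2^{-(v-k)\delta} f_k + \sum_{k>v} 2^{-(k-v)\delta} f_k =: g_v^{(1)} + g_v^{(2)}$ and treat the two pieces separately, since the quasi-norm is (equivalent to) subadditive in the relevant range. For $g_v^{(2)}$, the exponents $2^{-(k-v)\delta}$ are summable over $k>v$ with the sum of all of them bounded, so by the convexity/unit-ball argument one controls $\|(g_v^{(2)})_v\|$ directly in terms of $\sup_k \|(f_k)\|$ — but to keep the estimate sharp I would instead rewrite the sum as a nested sum and apply Lemma \ref{lq-inequality} with $a = 2^{-\delta}$ after reindexing. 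The genuinely delicate piece is $g_v^{(1)} = \sum_{j\ge 0} 2^{-j\delta} f_{v-j}$ (with the convention $f_{v-j}=0$ for $j>v$): here I would fix a dyadic cube $P$, work inside $\|\cdot\|_{L^{p(\cdot)}(\ell^{q(\cdot)})}$ restricted to $P$ with the weight $|P|^{-\tau(\cdot)}$, and use the homogeneity of the $\ell^{q(\cdot)}$ (quasi-)norm in the index together with a discrete Minkowski / Hölder step to pull out the geometric factor.

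Concretely, the key steps in order: (1) By homogeneity we may assume $\|(f_v)_v\|_{L_{p(\cdot)}^{\tau(\cdot)}(\ell^{q(\cdot)})}\le 1$ and aim to bound $\|(g_v)_v\|$ by a constant; equivalently, via \eqref{mod-est}-type reasoning, it suffices to bound the associated modular uniformly over $P\in\mathcal{Q}$. (2) Pick $\varepsilon>0$ small with $\delta - \varepsilon > \tau^+$ and split $2^{-|k-v|\delta} = 2^{-|k-v|\varepsilon}\cdot 2^{-|k-v|(\delta-\varepsilon)}$; apply Hölder's inequality in $\ell^{q(x)}$ (or in $\ell^1$ over $k$, pointwise in $x$) to get $|g_v(x)|^{q(x)} \lesssim \big(\sum_k 2^{-|k-v|\varepsilon q(x)}\big)^{q(x)/q'(x)}\sum_k 2^{-|k-v|(\delta-\varepsilon)q(x)}|f_k(x)|^{q(x)}$, and bound the first factor by a constant since $q^- > 0$. (3) Sum over $v$: $\sum_v 2^{v s q(x)}|g_v(x)|^{q(x)} \lesssim \sum_k |f_k(x)|^{q(x)} \sum_v 2^{-|k-v|(\delta-\varepsilon)q(x)} 2^{(v-k)sq(x)} 2^{ksq(x)}$ — but here the $\tau$-weight on cubes, not a smoothness weight $2^{vs}$, is what appears, so in fact step (3) must be done more carefully with the cube weight $|P|^{-\tau(x)}$ held fixed, which makes the $v$-sum a clean geometric series $\sum_v 2^{-|k-v|(\delta-\varepsilon)q(x)} \lesssim 1$ uniformly in $x$ and $k$. (4) Conclude $\sum_v |g_v(x)/|P|^{\tau(x)}|^{q(x)} \lesssim \sum_k |f_k(x)/|P|^{\tau(x)}|^{q(x)}$ pointwise on $P$, hence $\big\|(g_v/|P|^{\tau(\cdot)}\chi_P)_v\big\|_{L^{p(\cdot)}(\ell^{q(\cdot)})} \lesssim \big\|(f_k/|P|^{\tau(\cdot)}\chi_P)_k\big\|_{L^{p(\cdot)}(\ell^{q(\cdot)})} \le 1$; taking the supremum over $P$ gives the claim. (5) Restore homogeneity.

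The main obstacle is step (2)--(3): making the pointwise Hölder split in $k$ legitimate requires $q^- > 0$ (available) and, more importantly, requires that after summing in $v$ the residual geometric factor in the $k$-sum does not depend on the variable exponent in a harmful way — since $\delta - \varepsilon > \tau^+ \ge 0$ already guarantees $\sum_v 2^{-|k-v|(\delta-\varepsilon)q(x)} \le C$ with $C$ independent of $x$ (as $q(x)>q^->0$), this works, but one must be attentive that the cube weight $|P|^{\tau(x)}$ is the \emph{same} for every index $v$ and $k$ (it depends only on $P$), so it factors out cleanly and no interaction between the weight and the summation index occurs; this is precisely why the condition is $\delta > \tau^+$ rather than something involving $q$. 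A secondary technical point is the truncation $f_{v-j}=0$ for $j>v$ and the switch between the two halves $g_v^{(1)},g_v^{(2)}$, which only costs a factor of $2$ in the quasi-norm after using its subadditivity-up-to-constant; alternatively one can avoid the split entirely by noting $\sum_k 2^{-|k-v|\delta}|f_k| = \sum_{j\in\mathbb{Z}} 2^{-|j|\delta}|f_{v-j}|\chi_{\{v-j\ge 0\}}$ and applying the pointwise Hölder argument directly to this single sum.
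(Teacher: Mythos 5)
Your argument works as written only for the cubes $P$ with $|P|\ge 1$ (where $v_P^+=0$), and the place where it breaks is exactly the point you single out as unproblematic. The quasi-norm $\left\Vert (f_v)_v\right\Vert_{L_{p(\cdot)}^{\tau(\cdot)}(\ell^{q(\cdot)})}$ controls, for a given dyadic cube $P$, only the \emph{truncated} sequence $\bigl(f_k|P|^{-\tau(\cdot)}\chi_P\bigr)_{k\ge v_P^+}$. In your step (4) you bound $\sum_{v\ge v_P^+}\bigl|g_v/|P|^{\tau}\bigr|^{q(x)}$ pointwise by $\sum_{k\ge 0}\bigl|f_k/|P|^{\tau}\bigr|^{q(x)}$ and then assert that the latter has $L^{p(\cdot)}(\ell^{q(\cdot)})$-norm at most $1$. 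For a small cube ($v_P>0$) this is false in general: the low-frequency terms $0\le k<v_P$ are never measured against the weight $|P|^{-\tau(\cdot)}$ in the hypothesis; they are only controlled via the ancestor dyadic cube $P_k\supset P$ with $l(P_k)=2^{-k}$, and converting weights costs
\begin{equation*}
\frac{f_k}{|P|^{\tau(x)}}=2^{n(v_P-k)\tau(x)}\,\frac{f_k}{|P_k|^{\tau(x)}},
\end{equation*}
an exponentially growing factor in $v_P-k$. This factor must be absorbed by the decay $2^{-(v-k)\delta}\le 2^{-(v_P-k)\delta}$, and that is precisely where the hypothesis on $\delta$ is consumed (the bookkeeping with $|P|=2^{-nv_P}$ actually produces $2^{n(v_P-k)\tau^{+}}$, so the condition on $\delta$ arises here and nowhere else). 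Your remark that the weight ``factors out cleanly and no interaction between the weight and the summation index occurs'' would, if correct, prove the lemma for every $\delta>0$ and make the hypothesis $\delta>\tau^{+}$ vacuous — a sign that the mechanism has been misidentified.

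The repair is the standard one, and is what the source the paper defers to (\cite[Lemma 2.10]{D6}) does: for $P$ with $v_P>0$ split $g_v=\sum_{k\ge v_P}2^{-|k-v|\delta}f_k+\sum_{0\le k<v_P}2^{-(v-k)\delta}f_k$. The first sum is handled exactly by your geometric-series argument (with the subadditivity $(\sum_k a_k)^{q(x)}\le\sum_k a_k^{q(x)}$ replacing H\"older when $q(x)<1$), and there only $\delta>0$ is needed. For the second sum insert the ancestor cubes $P_k$, absorb $2^{n(v_P-k)\tau^{+}}$ into $2^{-(v_P-k)\delta}$, and sum the remaining geometric tail in $k$ and $v$, e.g.\ via Lemma \ref{lq-inequality}. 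Your steps (1), (2), (5) and the case $|P|\ge 1$ are otherwise sound.
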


The proof of Lemma \ref{Key-lemma} can be obtained by the same arguments
used in \cite[Lemma 2.10]{D6}.

The following statement can be found in \cite{BM}, that plays an essential
role later on.

\begin{lemma}
\label{BM-lemma}Let real numbers $s_{1}<s_{0}$ be given, and $\sigma \in
]0,1[$. For $0<q\leq \infty $ and $J\in \mathbb{N}_{0}$ there is $c>0$ such
that%
\begin{equation*}
\left\Vert (2^{\left( \sigma s_{0}+\left( 1-\sigma \right) s_{1}\right)
j}a_{j})_{j\geq J}\right\Vert _{\ell ^{_{q}}}\leq c\left\Vert
(2^{s_{0}j}a_{j})_{j\geq J}\right\Vert _{\ell ^{_{\infty }}}^{\sigma
}\left\Vert (2^{s_{1}j}a_{j})_{j\geq J}\right\Vert _{\ell ^{_{\infty
}}}^{1-\sigma }
\end{equation*}%
holds for all complex sequences $(2^{s_{0}j}a_{j})_{j\in \mathbb{N}_{0}}$ in 
$\ell ^{_{\infty }}$.
\end{lemma}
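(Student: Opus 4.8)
The plan is to reduce everything to a single-index estimate and then sum. First I would write $\sigma s_0 + (1-\sigma)s_1 = s_1 + \sigma(s_0-s_1)$, and set $\Delta := s_0 - s_1 > 0$ and $A_0 := \|(2^{s_0 j}a_j)_{j\geq J}\|_{\ell^\infty}$, $A_1 := \|(2^{s_1 j}a_j)_{j\geq J}\|_{\ell^\infty}$. If either $A_0$ or $A_1$ is zero the statement is trivial (the sequence vanishes), and by homogeneity in $(a_j)$ we may assume $A_1 = 1$, so that $|2^{s_1 j}a_j| \leq 1$ and $|2^{s_0 j}a_j| \leq A_0$ for all $j\geq J$. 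The goal is then to bound $\big(\sum_{j\geq J} 2^{q(\sigma s_0 + (1-\sigma)s_1)j}|a_j|^q\big)^{1/q}$ by $c\,A_0^{\sigma}$.

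The key pointwise step: for each $j\geq J$ we have two bounds on $|a_j|$, namely $|a_j| \leq 2^{-s_1 j}$ and $|a_j| \leq A_0 2^{-s_0 j}$. Interpolating these with weights $1-\sigma$ and $\sigma$ gives $|a_j| \leq A_0^{\sigma} 2^{-(s_1 + \sigma\Delta)j} = A_0^{\sigma} 2^{-(\sigma s_0 + (1-\sigma)s_1)j}$, so that $2^{(\sigma s_0 + (1-\sigma)s_1)j}|a_j| \leq A_0^{\sigma}$ for \emph{every} $j$. This already settles the case $q=\infty$ with constant $1$. For finite $q$ one cannot simply sum this uniform bound, so I would instead exploit the gap between the two scales: split the sum at the index $j_0$ where the two competing bounds $2^{-s_1 j}$ and $A_0 2^{-s_0 j}$ cross, i.e. near $j_0 \approx \Delta^{-1}\log_2 A_0$. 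For $j \geq j_0$ use $|a_j|\leq A_0 2^{-s_0 j}$, giving a geometric series $\sum_{j\geq j_0} 2^{q(\sigma s_0 + (1-\sigma)s_1 - s_0)j} = \sum_{j\geq j_0} 2^{-q(1-\sigma)\Delta j}$ which converges (since $(1-\sigma)\Delta>0$) and whose value at the cutoff is $\lesssim A_0^{q\sigma}$; for $J\leq j < j_0$ use $|a_j|\leq 2^{-s_1 j}$, giving $\sum_{j<j_0} 2^{q(\sigma s_0 + (1-\sigma)s_1 - s_1)j} = \sum_{j<j_0} 2^{q\sigma\Delta j}$, a geometric series summing to $\lesssim 2^{q\sigma\Delta j_0} \lesssim A_0^{q\sigma}$. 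Adding the two contributions yields the $\ell^q$ norm $\lesssim A_0^{\sigma}$, with the constant $c$ depending only on $q$ and $\Delta = s_0-s_1$ (and being independent of $J$), as claimed.

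I do not expect a genuine obstacle here; the only mild care needed is the bookkeeping when $j_0 < J$ (then only the ``large $j$'' regime is present and the bound is immediate) and the observation that the geometric series constants are uniform in $J$ because in each regime the series is dominated by its value at the crossover index regardless of where the summation starts. Everything else is the two-line convexity inequality $|a_j|\leq A_0^{\sigma}A_1^{1-\sigma}2^{-(\sigma s_0+(1-\sigma)s_1)j}$ followed by splitting a convergent geometric series at the balance point.
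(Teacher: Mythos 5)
Your proof is correct, and the paper itself offers no proof of this lemma --- it is simply quoted from the reference [BM] (Brezis--Mironescu). Your argument (normalize $A_1=1$, use the two-sided bound $|a_j|\leq \min\big(2^{-s_1 j},\,A_0 2^{-s_0 j}\big)$, and split the $\ell^q$ sum at the crossover index $j_0=\Delta^{-1}\log_2 A_0$ so that each half is a geometric series dominated by its value at $j_0$, which equals $A_0^{q\sigma}$ up to a constant) is exactly the standard proof of this inequality, and your handling of the edge cases ($q=\infty$, vanishing norms, $j_0<J$) is sound; note also that for $J\geq 0$ and $s_1<s_0$ one automatically has $A_1\leq A_0<\infty$, so the normalization is always legitimate.
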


\section{The spaces\textbf{\ }$\mathfrak{F}_{p(\cdot ),q(\cdot )}^{\protect%
\alpha (\cdot ),\protect\tau (\cdot )}$}

In this section we\ present the Fourier analytical definition of
Triebel-Lizorkin-type spaces of variable smoothness and integrability\ and
we prove the basic properties in analogy to the Triebel-Lizorkin-type spaces
with fixed exponents. Select a pair of Schwartz functions $\Phi $ and $%
\varphi $ satisfy%
\begin{equation}
\text{supp}\mathcal{F}\Phi \subset \overline{B(0,2)}\text{ and }|\mathcal{F}%
\Phi (\xi )|\geq c\text{ if }|\xi |\leq \frac{5}{3}  \label{Ass1}
\end{equation}%
and 
\begin{equation}
\text{supp}\mathcal{F}\varphi \subset \overline{B(0,2)}\backslash B(0,1/2)%
\text{ and }|\mathcal{F}\varphi (\xi )|\geq c\text{ if }\frac{3}{5}\leq |\xi
|\leq \frac{5}{3}  \label{Ass2}
\end{equation}%
where $c>0$. Let $\alpha :\mathbb{R}^{n}\rightarrow \mathbb{R}$, $p,q,\tau
\in \mathcal{P}_{0}$ and $\Phi $ and $\varphi $ satisfy $\mathrm{\eqref{Ass1}%
}$ and $\mathrm{\eqref{Ass2}}$, respectively and we put $\varphi
_{v}=2^{vn}\varphi (2^{v}\cdot )$.

\begin{definition}
\label{B-F-def}Let $\alpha :\mathbb{R}^{n}\rightarrow \mathbb{R}$, $\tau :%
\mathbb{R}^{n}\rightarrow \mathbb{R}^{+}$, $p,q\in \mathcal{P}_{0}$ and $%
\Phi $ and $\varphi $ satisfy $\mathrm{\eqref{Ass1}}$ and $\mathrm{%
\eqref{Ass2}}$, respectively and we put $\varphi _{v}=2^{vn}\varphi
(2^{v}\cdot )$. The Triebel-Lizorkin-type space $\mathfrak{F}_{p(\cdot
),q(\cdot )}^{\alpha (\cdot ),\tau (\cdot )}$\ is the collection of all $%
f\in \mathcal{S}^{\prime }(\mathbb{R}^{n})$\ such that 
\begin{equation}
\left\Vert f\right\Vert _{\mathfrak{F}_{p(\cdot ),q(\cdot )}^{\alpha (\cdot
),\tau (\cdot )}}:=\sup_{P\in \mathcal{Q}}\Big\|\Big(\frac{2^{v\alpha \left(
\cdot \right) }\varphi _{v}\ast f}{|P|^{\tau (\cdot )}}\chi _{P}\Big)_{v\geq
v_{P}^{+}}\Big\|_{L^{p(\cdot )}(\ell ^{q(\cdot )})}<\infty ,  \label{B-def}
\end{equation}%
where $\varphi _{0}$ is replaced by $\Phi $.
\end{definition}

Using the system $(\varphi _{v})_{v\in \mathbb{N}_{0}}$ we can define the
norm%
\begin{equation*}
\left\Vert f\right\Vert _{F_{p,q}^{\alpha ,\tau }}:=\sup_{P\in \mathcal{Q}}%
\frac{1}{\left\vert P\right\vert ^{\tau }}\Big\|\Big(\sum%
\limits_{v=v_{P}^{+}}^{\infty }2^{v\alpha q}\left\vert \varphi _{v}\ast
f\right\vert ^{q}\chi _{P}\Big)|^{1/q}\Big\|_{p}
\end{equation*}%
for constants $\alpha $ and $p,q\in (0,\infty ]$. The Triebel-Lizorkin-type
space $F_{p,q}^{\alpha ,\tau }$ consist of all distributions $f\in \mathcal{S%
}^{\prime }(\mathbb{R}^{n})$ for which $\left\Vert f\right\Vert
_{F_{p,q}^{\alpha ,\tau }}<\infty $. It is well-known that these spaces do
not depend on the choice of the system $(\varphi _{v})_{v\in \mathbb{N}_{0}}$
(up to equivalence of quasinorms). Further details on the classical theory
of these spaces can be found in \cite{D1} and \cite{WYY}; see also \cite{D4}
for recent developments.

One recognizes immediately that if $\alpha $, $\tau $, $p$ and $q$ are
constants, then $\mathfrak{F}_{p(\cdot ),q(\cdot )}^{\alpha (\cdot ),\tau
(\cdot )}=F_{p,q}^{\alpha ,\tau }$. When, $q:=\infty $\ the
Triebel-Lizorkin-type space $\mathfrak{F}_{p(\cdot ),\infty }^{\alpha (\cdot
),\tau (\cdot )}$\ consist of all distributions $f\in \mathcal{S}^{\prime }(%
\mathbb{R}^{n})$\ such that 
\begin{equation*}
\Big\|\sup_{P\in \mathcal{Q},v\geq v_{P}^{+}}\frac{2^{v\alpha \left( \cdot
\right) }\varphi _{v}\ast f}{|P|^{\tau (\cdot )}}\chi _{P}\Big\|_{p(\cdot
)}<\infty .
\end{equation*}%
Let $B_{J}$ be any ball of $\mathbb{R}^{n}$ with radius $2^{-J}$, $J\in 
\mathbb{Z}$. In the definition of the spaces $\mathfrak{F}_{p(\cdot
),q(\cdot )}^{\alpha (\cdot ),\tau (\cdot )}$ if we replace the dyadic cubes 
$P$ by the balls $B_{J}$, then we obtain equivalent quasi-norms. From these
if we replace dyadic cubes $P$ in Definition \ref{B-F-def} by arbitrary
cubes $P$, we then obtain equivalent quasi-norms.

The Triebel-Lizorkin space of variable smoothness and integrability $%
F_{p(\cdot ),q(\cdot )}^{\alpha (\cdot )}$ is the collection of all $f\in 
\mathcal{S}^{\prime }(\mathbb{R}^{n})$\ such that 
\begin{equation*}
\left\Vert f\right\Vert _{F_{p(\cdot ),q(\cdot )}^{\alpha (\cdot
)}}:=\left\Vert \left( 2^{v\alpha \left( \cdot \right) }\varphi _{v}\ast
f\right) _{v\geq 0}\right\Vert _{L^{p(\cdot )}(\ell ^{q(\cdot )})}<\infty ,
\end{equation*}%
which introduced and investigated in \cite{DHR}, see \cite{KV122}\ and \cite%
{IN14} for further results. Obviously, $\mathfrak{F}_{p(\cdot ),q(\cdot
)}^{\alpha (\cdot ),0}=F_{p(\cdot ),q(\cdot )}^{\alpha (\cdot )}$. We refer
the reader to the recent paper \cite{YHSY} for further details, historical
remarks and more references on embeddings of Triebel-Lizorkin-type spaces
with fixed exponents. More information about Triebel-Lizorkin spaces can be
found in [35-38].

Sometimes it is of great service if one can restrict sup$_{P\in \mathcal{Q}}$
in the definition to a supremum taken with respect to dyadic cubes with side
length $\leq 1$.

\begin{lemma}
\label{new-equinorm}Let $\alpha ,\mathbb{\tau }\in C_{\mathrm{loc}}^{\log }$%
, $\mathbb{\tau }^{-}\geq 0$ and $p,q\in \mathcal{P}_{0}^{\log }$ with $%
\left( \mathbb{\tau }-\frac{1}{p}\right) ^{-}\geq 0$ and $%
0<q^{+},p^{+}<\infty $. A tempered distribution $f$ belongs to $\mathfrak{F}%
_{p(\cdot ),q(\cdot )}^{\alpha (\cdot ),\tau (\cdot )}$ if and only if,%
\begin{equation*}
\left\Vert f\right\Vert _{\mathfrak{F}_{p(\cdot ),q(\cdot )}^{\alpha (\cdot
),\tau (\cdot )}}^{\#}:=\sup_{P\in \mathcal{Q},|P|\leq 1}\Big\|\Big(\frac{%
2^{v\alpha \left( \cdot \right) }\varphi _{v}\ast f}{|P|^{\tau (\cdot )}}%
\chi _{P}\Big)_{v\geq v_{P}}\Big\|_{L^{p(\cdot )}(\ell ^{q(\cdot )})}<\infty
.
\end{equation*}%
Furthermore, the quasi-norms $\left\Vert f\right\Vert _{\mathfrak{F}%
_{p(\cdot ),q(\cdot )}^{\alpha (\cdot ),\tau (\cdot )}}$ and $\left\Vert
f\right\Vert _{\mathfrak{F}_{p(\cdot ),q(\cdot )}^{\alpha (\cdot ),\tau
(\cdot )}}^{\#}$ are equivalent.
\end{lemma}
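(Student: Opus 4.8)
The plan is to show the two quasi-norms are equivalent. One direction is trivial: since the supremum in $\|f\|_{\mathfrak{F}_{p(\cdot),q(\cdot)}^{\alpha(\cdot),\tau(\cdot)}}^{\#}$ runs over a subfamily of dyadic cubes (those with $|P|\le 1$, i.e.\ $v_P\ge 0$, so $v_P^+=v_P$), we immediately get $\|f\|_{\mathfrak{F}_{p(\cdot),q(\cdot)}^{\alpha(\cdot),\tau(\cdot)}}^{\#}\le \|f\|_{\mathfrak{F}_{p(\cdot),q(\cdot)}^{\alpha(\cdot),\tau(\cdot)}}$. The real work is the reverse inequality: bounding the contribution of a \emph{large} dyadic cube $P$ (with $|P|\ge 1$, so $v_P\le 0$ and $v_P^+=0$) by a constant multiple of $\|f\|_{\mathfrak{F}_{p(\cdot),q(\cdot)}^{\alpha(\cdot),\tau(\cdot)}}^{\#}$. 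The idea is to tile $P$ by the $2^{-nv_P}$ dyadic unit cubes $\{Q\in\mathcal{Q}: Q\subset P,\ l(Q)=1\}$ it contains, and to compare $|P|^{\tau(\cdot)}$ with $|Q|^{\tau(\cdot)}=1$ using the $\log$-H\"older continuity of $\tau$.

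First I would fix a large dyadic cube $P$ with $l(P)=2^{-v_P}\ge 1$ and write $\chi_P=\sum_{Q}\chi_Q$, the sum over the unit dyadic cubes $Q\subset P$. For each such $Q$ one has $v_Q=0=v_P^+$, so the inner $\ell^{q(\cdot)}$-sum ranges over the same set $v\ge 0$ in both expressions. The key pointwise estimate is that on $Q$ we have $|P|^{-\tau(x)}\le c\,|Q|^{-\tau(x)}=c$ provided $\tau$ is bounded and $\log$-H\"older; more precisely, since $|P|\ge 1$ and $\tau\ge 0$, trivially $|P|^{-\tau(x)}\le 1$, so in fact $\frac{2^{v\alpha(\cdot)}\varphi_v\ast f}{|P|^{\tau(\cdot)}}\chi_Q$ is pointwise dominated by $2^{v\alpha(\cdot)}\varphi_v\ast f\,\chi_Q=\frac{2^{v\alpha(\cdot)}\varphi_v\ast f}{|Q|^{\tau(\cdot)}}\chi_Q$. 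Hence, writing $g_v:=2^{v\alpha(\cdot)}\varphi_v\ast f$,
\begin{equation*}
\Big\|\Big(\frac{g_v}{|P|^{\tau(\cdot)}}\chi_P\Big)_{v\ge 0}\Big\|_{L^{p(\cdot)}(\ell^{q(\cdot)})}
=\Big\|\sum_{Q}\Big\|\Big(\frac{g_v}{|P|^{\tau(\cdot)}}\chi_Q\Big)_v\Big\|_{\ell^{q(\cdot)}}\Big\|_{p(\cdot)}
\le\Big\|\sum_{Q}\Big\|\big(g_v\chi_Q\big)_v\Big\|_{\ell^{q(\cdot)}}\Big\|_{p(\cdot)},
\end{equation*}
where I used that the $Q$ are disjoint so the $\ell^{q(x)}$-norm over $P$ splits as a sum over $Q$ (the $\chi_Q$ have disjoint supports, so only one term is nonzero at each $x$).

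Next, the sum $\sum_Q \|(g_v\chi_Q)_v\|_{\ell^{q(\cdot)}}$ is a sum of functions with pairwise disjoint supports, and each summand equals $\|(g_v)_v\|_{\ell^{q(\cdot)}}\chi_Q$. So this whole quantity is just $\|(g_v)_v\|_{\ell^{q(\cdot)}}\chi_P$, and its $L^{p(\cdot)}$-norm is controlled by the sum (or, via the disjointness and the embedding $\ell^{p^-}\hookrightarrow\ell^{\min(1,p^-)}$-type estimate for quasi-norms of disjointly supported functions, see e.g.\ the standard block estimate in variable Lebesgue spaces) of the $L^{p(\cdot)}$-norms over the individual $Q$. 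Concretely, for functions with disjoint supports one has $\|\sum_Q h_Q\|_{p(\cdot)}\le c\big(\sum_Q\|h_Q\|_{p(\cdot)}^{\underline{p}}\big)^{1/\underline{p}}$ with $\underline{p}=\min(1,p^-)$ is \emph{not} directly what we want since the number of cubes is unbounded; instead the correct move is to bound $\|(g_v)_v\|_{\ell^{q(\cdot)}}\chi_P$ directly: since $|P|\ge1$ is a dyadic cube, it is itself a candidate in the $\#$-supremum only if $|P|\le 1$, which fails, so we cannot use $P$ itself. This is the main obstacle, and it is resolved exactly by the hypothesis $\big(\tau-\tfrac1p\big)^-\ge 0$: this guarantees that replacing the normalization $|P|^{\tau(\cdot)}$ by $|Q|^{\tau(\cdot)}=1$ while simultaneously passing from the single large integral over $P$ to the supremum over unit cubes $Q$ does not lose the $|P|^{1/p}$-type factor coming from the measure of $P$. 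I would make this precise by applying the $\widetilde{L_{\tau(\cdot)}^{p(\cdot)}}$-type rewriting \eqref{mod-est}: reduce to showing that the modular $\sum_Q\varrho_{p(\cdot)/q(\cdot)}\big(\|(g_v)_v\|_{\ell^{q(\cdot)}}^{q(\cdot)}\chi_Q\big)\le 1$ when $\|f\|^{\#}\le 1$, which follows because each summand is $\le 1$ and $\ldots$ no — rather, one uses that for each $Q$, $\varrho_{p(\cdot)/q(\cdot)}(\cdots\chi_Q)\le \|f\|^{\#\,\underline{q}}$ and these are summed against the normalization $|P|^{(\tau-1/p)q(\cdot)}$ which, being $\ge$ a positive power of $|P|=2^{-nv_P}\ge\#\{Q\}$, absorbs the number of cubes. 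Running the estimate with the modular characterization and $\log$-H\"older continuity of $\tau$ (via Lemma~\ref{DHR-lemma}-type reasoning applied to $\tau$) to pass between $|P|^{\tau(x)}$ and $|P|^{\tau(x_Q)}$ inside each $Q$, taking $p(\cdot)/q(\cdot)$-quasi-norms, and summing the geometric-type series over the dyadic generations, then taking $1/\underline{q}$-th powers, yields $\|f\|_{\mathfrak{F}_{p(\cdot),q(\cdot)}^{\alpha(\cdot),\tau(\cdot)}}\le c\,\|f\|_{\mathfrak{F}_{p(\cdot),q(\cdot)}^{\alpha(\cdot),\tau(\cdot)}}^{\#}$, completing the proof.

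The step I expect to be the genuine obstacle is precisely the bookkeeping that shows the uniform bound over \emph{large} cubes $P$: one must verify that the loss incurred by tiling $P$ with unit cubes and re-normalizing is exactly compensated by the condition $(\tau-\tfrac1p)^-\ge 0$, and that the $\log$-H\"older control on $\tau$ lets one treat $|P|^{\tau(\cdot)}$ as essentially constant ($\approx|P|^{\tau(x_P)}$) on the scale of $P$ — the relevant oscillation bound being $|P|^{|\tau(x)-\tau(y)|}\le c$ for $x,y\in P$, which holds since $|x-y|\lesssim l(P)$ and $\tau\in C_{\mathrm{loc}}^{\log}$ combined with $\tau\in L^\infty$. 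Everything else (the trivial direction, the disjoint-support splitting of the $\ell^{q(x)}$ and $L^{p(x)}$ norms) is routine.
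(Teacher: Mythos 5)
Your proposal is correct and follows essentially the same route as the paper: tile a large dyadic cube $P$ by its $|P|=2^{-Jn}$ unit subcubes, use the unit-ball property to bound each unit-cube modular by $1$, and observe that $\left(\tau-\tfrac{1}{p}\right)^{-}\geq 0$ gives the pointwise bound $|P|^{-\tau(x)p(x)}\leq |P|^{-1}$, which exactly cancels the number of unit cubes. One caveat: your side remark that $|P|^{|\tau(x)-\tau(y)|}\leq c$ for $x,y\in P$ via log-H\"{o}lder continuity is false for large $P$ (the oscillation of $\tau$ over distances comparable to $l(P)$ is only bounded by a constant, so that factor grows like a power of $|P|$); fortunately this freezing of $\tau$ is not needed, since the pointwise inequality $|P|^{\tau(x)p(x)}\geq|P|$ already suffices.
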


\begin{proof}
Let $P$ be a dyadic cube such that $|P|=2^{-Jn}$, for some $-J\in \mathbb{N}$%
. Let $\{Q_{m}$ $:m=1,...,2^{-Jn}\}$ be the collection of all dyadic cubes
with volume $1$ and such that $P=\cup _{m=1}^{2^{-Jn}}Q_{m}$. Due to the
homogeneity, we may assume that $\left\Vert f\right\Vert _{\mathfrak{F}%
_{p(\cdot ),q(\cdot )}^{\alpha (\cdot ),\tau (\cdot )}}^{\#}=1$. It suffices
to show that%
\begin{equation*}
\int_{P}\Big(\sum\limits_{v=v_{P}^{+}}^{\infty }2^{v\alpha (x)q(x)}\frac{%
\left\vert \varphi _{v}\ast f(x)\right\vert ^{q(x)}}{|P|^{\tau (x)q(x)}}\Big)%
^{p(x)/q(x)}dx\lesssim 1
\end{equation*}%
for any dyadic cube $P$, with $|P|\geq 1$. The left-hand side can be
rewritten us%
\begin{eqnarray*}
&&\sum_{m=1}^{2^{-Jn}}\int_{Q_{m}}\Big(\sum\limits_{v=v_{P}^{+}}^{\infty
}2^{v\alpha (x)q(x)}\frac{\left\vert \varphi _{v}\ast f(x)\right\vert ^{q(x)}%
}{|P|^{\tau (x)q(x)}}\Big)^{p(x)/q(x)}dx \\
&\leq &2^{Jn}\sum_{m=1}^{2^{-Jn}}\int_{Q_{m}}\Big(\sum%
\limits_{v=v_{Q_{m}}^{+}}^{\infty }2^{v\alpha (x)q(x)}\frac{\left\vert
\varphi _{v}\ast f(x)\right\vert ^{q(x)}}{|Q_{m}|^{\tau (x)q(x)}}\Big)%
^{p(x)/q(x)}dx \\
&\lesssim &2^{Jn}\sum_{m=1}^{2^{-Jn}}1=1,
\end{eqnarray*}%
where we used $|P|^{p(x)\tau (x)}\geq 2^{-Jn}$, which completes the proof.
\end{proof}

\begin{remark}
We like to point out that this result with fixed exponents is given in \cite[%
Lemma 2.2]{WYY}.
\end{remark}

\begin{theorem}
Let $\alpha ,\mathbb{\tau }\in C_{\mathrm{loc}}^{\log }$, $\mathbb{\tau }%
^{-}\geq 0$ and $p,q\in \mathcal{P}_{0}^{\log }$ with $0<q^{+},p^{+}<\infty $%
. If $\left( \mathbb{\tau }-\frac{1}{p}\right) ^{-}>0$ or $\left( \mathbb{%
\tau }-\frac{1}{p}\right) ^{-}\geq 0$ and $q:=\infty $, then%
\begin{equation*}
\mathfrak{F}_{p(\cdot ),q(\cdot )}^{\alpha (\cdot ),\tau (\cdot )}=B_{\infty
,\infty }^{\alpha (\cdot )+n(\tau (\cdot )-1/p(\cdot ))}
\end{equation*}%
with equivalent quasi-norms.
\end{theorem}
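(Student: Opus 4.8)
My plan is to prove the two inclusions $\mathfrak{F}_{p(\cdot ),q(\cdot )}^{\alpha (\cdot ),\tau (\cdot )}\hookrightarrow B_{\infty ,\infty }^{s(\cdot )}$ and $B_{\infty ,\infty }^{s(\cdot )}\hookrightarrow \mathfrak{F}_{p(\cdot ),q(\cdot )}^{\alpha (\cdot ),\tau (\cdot )}$ separately, where $s(\cdot ):=\alpha (\cdot )+n(\tau (\cdot )-1/p(\cdot ))$. Since $\alpha ,\tau \in C_{\mathrm{loc}}^{\log }$ and $1/p\in C^{\log }$, $s$ is bounded and lies in $C_{\mathrm{loc}}^{\log }$, and I use the $\varphi $-transform description $\Vert f\Vert _{B_{\infty ,\infty }^{s(\cdot )}}=\sup_{j\geq 0}\Vert 2^{js(\cdot )}\varphi _{j}\ast f\Vert _{\infty }$ (with $\varphi _{0}=\Phi $), built from the \emph{same} system $(\varphi _{v})_{v\geq 0}$ fixed in Definition~\ref{B-F-def}, on both sides. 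In each direction I normalise the right-hand quasi-norm to $1$ and conclude by homogeneity. Only the embedding $\mathfrak{F}_{p(\cdot ),q(\cdot )}^{\alpha (\cdot ),\tau (\cdot )}\hookrightarrow B_{\infty ,\infty }^{s(\cdot )}$ will use the full structure of the space; the reverse inclusion is where the hypothesis $(\tau -1/p)^{-}>0$, resp.\ $(\tau -1/p)^{-}\geq 0$ with $q=\infty $, enters, exactly as in the classical identity $F_{p,q}^{s,\tau }=B_{\infty ,\infty }^{s+n(\tau -1/p)}$, and it is the step I expect to need the most care.

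To prove $B_{\infty ,\infty }^{s(\cdot )}\hookrightarrow \mathfrak{F}_{p(\cdot ),q(\cdot )}^{\alpha (\cdot ),\tau (\cdot )}$ I would assume $\Vert f\Vert _{B_{\infty ,\infty }^{s(\cdot )}}=1$, i.e.\ $|\varphi _{v}\ast f(x)|\leq 2^{-vs(x)}$ for all $v\geq 0$ and $x$, fix a dyadic cube $P$ with $|P|=2^{-Jn}$, $J\in \mathbb{Z}$, so $v_{P}^{+}=\max (J,0)$, and estimate, for $x\in P$ and $v\geq v_{P}^{+}$,
\[
\frac{2^{v\alpha (x)}|\varphi _{v}\ast f(x)|}{|P|^{\tau (x)}}\leq 2^{-vn(\tau (x)-1/p(x))}\,2^{Jn\tau (x)}.
\]
Taking the $\ell ^{q(x)}$-norm over $v\geq v_{P}^{+}$ — a geometric series with a bound independent of $x$ when $(\tau -1/p)^{-}>0$, and a supremum attained at $v=v_{P}^{+}$ when $q=\infty $ and $(\tau -1/p)^{-}\geq 0$ — shows the left side is pointwise bounded by a constant times $2^{-v_{P}^{+}n(\tau (x)-1/p(x))}2^{Jn\tau (x)}\chi _{P}(x)$, which equals $2^{Jn/p(x)}\chi _{P}(x)$ if $J\geq 0$ and $2^{Jn\tau (x)}\chi _{P}(x)$ if $J<0$. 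The $L^{p(\cdot )}$-modular of the former is $2^{Jn}|P|=1$, and that of the latter is $\int_{P}2^{Jn\tau (x)p(x)}\,dx\leq 2^{Jn}|P|=1$ (using $\tau (x)p(x)\geq 1$, valid since $p^{+}<\infty $). By the unit ball property and homogeneity this yields $\Vert f\Vert _{\mathfrak{F}_{p(\cdot ),q(\cdot )}^{\alpha (\cdot ),\tau (\cdot )}}\lesssim \Vert f\Vert _{B_{\infty ,\infty }^{s(\cdot )}}$.

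For $\mathfrak{F}_{p(\cdot ),q(\cdot )}^{\alpha (\cdot ),\tau (\cdot )}\hookrightarrow B_{\infty ,\infty }^{s(\cdot )}$ I would assume $\Vert f\Vert _{\mathfrak{F}_{p(\cdot ),q(\cdot )}^{\alpha (\cdot ),\tau (\cdot )}}=1$ and fix $j\geq 0$, $x_{0}\in \mathbb{R}^{n}$. Keeping only the $v=j$ term in the $\ell ^{q(\cdot )}$-norm and taking $P=Q$ in the definition shows $\Vert 2^{j(\alpha (\cdot )+n\tau (\cdot ))}\varphi _{j}\ast f\,\chi _{Q}\Vert _{p(\cdot )}\leq 1$ for every dyadic $Q$ with $l(Q)=2^{-j}$; writing $g:=2^{js(\cdot )}|\varphi _{j}\ast f|=2^{-jn/p(\cdot )}2^{j(\alpha (\cdot )+n\tau (\cdot ))}|\varphi _{j}\ast f|$, the unit ball property then gives $\int_{Q}g(y)^{p(y)}\,dy\leq |Q|$ for every such $Q$. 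Since $\mathcal{F}(\varphi _{j}\ast f)$ is supported in $\overline{B(0,2^{j+1})}$, I write $\varphi _{j}\ast f=\omega _{2^{j+2}}\ast \omega _{2^{j+2}}\ast (\varphi _{j}\ast f)$ for a suitable $\omega \in \mathcal{S}(\mathbb{R}^{n})$ with $\mathrm{supp}\,\mathcal{F}\omega \subset \overline{B(0,1)}$ and $\mathcal{F}\omega \equiv 1$ on $\overline{B(0,1/2)}$, and apply Lemma~\ref{r-trick} with $R=N=2^{j+2}$ to obtain, for a fixed $0<r<p^{-}$, any $m>n$ and any $R_{0}\geq c_{\log }(rs)$,
\[
|\varphi _{j}\ast f(x_{0})|^{r}\lesssim \eta _{j,m+R_{0}}\ast |\varphi _{j}\ast f|^{r}(x_{0}).
\]
Multiplying by $2^{jrs(x_{0})}$ and moving the weight inside the convolution by Lemma~\ref{DHR-lemma} (with the $C_{\mathrm{loc}}^{\log }$ exponent $rs$) yields $2^{jrs(x_{0})}|\varphi _{j}\ast f(x_{0})|^{r}\lesssim \eta _{j,m}\ast g^{r}(x_{0})$. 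Splitting $\eta _{j,m}\ast g^{r}(x_{0})=\sum_{Q:\,l(Q)=2^{-j}}\int_{Q}\eta _{j,m}(x_{0}-y)g(y)^{r}\,dy$, using Lemma~\ref{Conv-est1} ($\eta _{j,m}(x_{0}-y)\lesssim \frac{1}{|Q|}\eta _{j,m}\ast \chi _{Q}(x_{0})$ for $y\in Q$) and Theorem~\ref{DHHR-estimate} applied to $g^{r}$ with exponent $p(\cdot )/r\in \mathcal{P}^{\log }$ and the bound $\int_{Q}g^{p(y)}\,dy\leq |Q|\leq 1$ (which gives $\frac{1}{|Q|}\int_{Q}g^{r}\lesssim 1$ uniformly in $Q$ and $j$), I obtain $\eta _{j,m}\ast g^{r}(x_{0})\lesssim \sum_{Q:\,l(Q)=2^{-j}}\eta _{j,m}\ast \chi _{Q}(x_{0})=\Vert \eta _{j,m}\Vert _{1}\lesssim 1$. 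Hence $2^{js(x_{0})}|\varphi _{j}\ast f(x_{0})|\lesssim 1$; taking the supremum over $x_{0}$ and $j\geq 0$ and using homogeneity gives $\Vert f\Vert _{B_{\infty ,\infty }^{s(\cdot )}}\lesssim \Vert f\Vert _{\mathfrak{F}_{p(\cdot ),q(\cdot )}^{\alpha (\cdot ),\tau (\cdot )}}$.

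The decisive point — and the main obstacle — is the order of operations in this second embedding: by absorbing the target weight $2^{jrs(\cdot )}$ into $|\varphi _{j}\ast f|^{r}$ (via Lemmas~\ref{r-trick} and~\ref{DHR-lemma}) \emph{before} decomposing into dyadic cubes, the local averages $\frac{1}{|Q|}\int_{Q}g^{r}$ become uniformly bounded by Theorem~\ref{DHHR-estimate}, so the cube sum collapses to $\Vert \eta _{j,m}\Vert _{1}$. Decomposing first would instead force a delicate summation in which the log-Hölder oscillation of $1/p$ must be balanced against the polynomial decay of $\eta _{j,m}$, forcing $m$ to be chosen in terms of $c_{\log }(1/p)$; the reordering avoids this altogether. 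The remaining verifications are routine but require care: choosing $r<p^{-}$ and $m>n$ (so $m+R_{0}>n$ as well) large enough for Lemmas~\ref{r-trick},~\ref{DHR-lemma} and~\ref{Conv-est1}, checking $p(\cdot )/r\in \mathcal{P}^{\log }$, and keeping track of the normalisation constants when passing between the quasi-norms, the modulars and the pointwise estimates.
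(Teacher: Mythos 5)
Your proposal is correct, and for the harder embedding it takes a genuinely different route from the paper. For $B_{\infty,\infty}^{s(\cdot)}\hookrightarrow\mathfrak{F}_{p(\cdot),q(\cdot)}^{\alpha(\cdot),\tau(\cdot)}$ your computation is essentially the paper's (the paper first reduces to cubes with $|P|\leq 1$ via Lemma \ref{new-equinorm}, whereas you treat $|P|>1$ directly; same algebra, same geometric series). For the converse the paper also starts from Lemma \ref{r-trick}, but with $r=p^{-}$, and then applies H\"older's inequality in variable Lebesgue spaces with an auxiliary exponent $t(\cdot)$ defined by $1/p^{-}=1/p(\cdot)+1/t(\cdot)$, splits the kernel $(1+2^{v}|x-\cdot|)^{-m}$ between the two factors, bounds one factor by a modular computation using the log-H\"older decay of $1/t$, and controls the other by an annular decomposition over $2^{i-v}\leq|y-x|<2^{i-v+1}$ together with the ball version of the $\mathfrak{F}$-quasi-norm. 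You instead take a small $r<p^{-}$, push the full target weight $2^{jrs(\cdot)}$ inside the convolution with Lemma \ref{DHR-lemma}, decompose into dyadic cubes of side $2^{-j}$, and use Theorem \ref{DHHR-estimate} (with exponent $p(\cdot)/r$ and the observation that only $\int_{Q}g^{p(y)}dy\leq 1$ is needed) to make the local averages $\frac{1}{|Q|}\int_{Q}g^{r}$ uniformly bounded, so that the cube sum collapses to $\Vert\eta_{j,m}\Vert_{1}$ via Lemma \ref{Conv-est1}. This avoids the H\"older step and the auxiliary exponent $t(\cdot)$ entirely and is closer in spirit to the paper's proof of the Sobolev embedding theorem; the price is the (routine) verification that $p(\cdot)/r\in\mathcal{P}^{\log}$ and that $rs\in C_{\mathrm{loc}}^{\log}$ so that Lemma \ref{DHR-lemma} applies. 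Both arguments are sound.

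One small correction: in the case $J<0$ of the first embedding you justify $\tau(x)p(x)\geq 1$ by ``$p^{+}<\infty$'', which is not the reason; the inequality $\tau(x)p(x)\geq 1$ is equivalent to $\tau(x)\geq 1/p(x)$, i.e.\ it is exactly the standing hypothesis $\left(\tau-\frac{1}{p}\right)^{-}\geq 0$. The estimate you need is therefore available, but you should cite the correct hypothesis.
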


\begin{proof}
We consider only $\left( \mathbb{\tau }-\frac{1}{p}\right) ^{-}>0$. The case 
$\left( \mathbb{\tau }-\frac{1}{p}\right) ^{-}\geq 0$ and $q:=\infty $ can
be proved analogously with the necessary modifications. Since $\left( 
\mathbb{\tau }-\frac{1}{p}\right) ^{-}>0$, then we use the equivalent norm
given in the previous lemma. First let us prove the following estimate%
\begin{equation*}
\left\Vert f\right\Vert _{\mathfrak{F}_{p(\cdot ),q(\cdot )}^{\alpha (\cdot
),\tau (\cdot )}}\lesssim \left\Vert f\right\Vert _{B_{\infty ,\infty
}^{\alpha (\cdot )+n(\tau (\cdot )-1/p(\cdot ))}}
\end{equation*}%
for any $f\in B_{\infty ,\infty }^{\alpha (\cdot )+n(\tau (\cdot )-1/p(\cdot
))}$. Let $P$ be a dyadic cube with volume $2^{-nv_{P}}$, $v_{P}\in \mathbb{N%
}$. We obtain that%
\begin{eqnarray*}
\frac{2^{v\alpha (x)}|\varphi _{v}\ast f(x)|}{|P|^{\tau (x)}} &\leq &c\text{ 
}2^{v(\alpha (x)+n(\tau (x)-1/p(x))+n(v_{P}-v)(\tau
(x)-1/p(x))+nv_{P}/p(x)}|\varphi _{v}\ast f(x)| \\
&\leq &c\text{ }2^{n(v_{P}-v)(\tau (x)-1/p(x))+nv_{P}/p(x)}\left\Vert
f\right\Vert _{B_{\infty ,\infty }^{\alpha (\cdot )+n(\tau (\cdot
)-1/p(\cdot ))}}
\end{eqnarray*}%
for any $x\in P$. Then for any $v\geq v_{P}$%
\begin{eqnarray*}
&&\Big\|\Big(\dsum\limits_{v=v_{P}}^{\infty }\left\vert \frac{2^{v\alpha
(\cdot )}\varphi _{v}\ast f}{|P|^{\tau (\cdot )}}\right\vert ^{q(\cdot
)}\chi _{P}\Big)^{1/q(\cdot )}\Big\|_{p(\cdot )} \\
&\lesssim &\left\Vert f\right\Vert _{B_{\infty ,\infty }^{\alpha (\cdot
)+n(\tau (\cdot )-1/p(\cdot ))}}\Big\|\Big(\dsum\limits_{v=v_{P}}^{\infty
}\left\vert 2^{n(v_{P}-v)(\tau (\cdot )-1/p(\cdot ))+nv_{P}/p(\cdot
)}\right\vert ^{q(\cdot )}\Big)^{1/q(\cdot )}\chi _{P}\Big\|_{p(\cdot )} \\
&\lesssim &c\text{ }\left\Vert f\right\Vert _{B_{\infty ,\infty }^{\alpha
(\cdot )+n(\tau (\cdot )-1/p(\cdot ))}}\left\Vert 2^{nv_{P}/p(\cdot )}\chi
_{P}\right\Vert _{\frac{p(\cdot )}{q(\cdot )}}\lesssim 1,
\end{eqnarray*}%
since $\left( \mathbb{\tau }-\frac{1}{p}\right) ^{-}>0$. Let $f\in \mathfrak{%
F}_{p(\cdot ),q(\cdot )}^{\alpha (\cdot ),\tau (\cdot )}$, with $\left\Vert
f\right\Vert _{\mathfrak{F}_{p(\cdot ),q(\cdot )}^{\alpha (\cdot ),\tau
(\cdot )}}=1$. By Lemma \ref{r-trick} we have for any $x\in \mathbb{R}%
^{n},m>n$ 
\begin{eqnarray*}
&&2^{v(\alpha (x)+n(\tau (x)-1/p(x))}|\varphi _{v}\ast f(x)| \\
&\leq &c\text{ }2^{v(\alpha (x)+n(\tau (x)-1/p(x))}(\eta _{v,m}\ast |\varphi
_{v}\ast f|^{p^{-}}(x))^{1/p^{-}} \\
&\leq &c\left\Vert 2^{v(\alpha (x)+n\tau (x))}\varphi _{v}\ast f(\cdot
)(1+2^{v}|x-\cdot |)^{-m/2p^{-}}\right\Vert _{p(\cdot )}\left\Vert
2^{vn/t(x)}(1+2^{v}|x-\cdot |)^{-m/2p^{-}}\right\Vert _{t(\cdot )},
\end{eqnarray*}%
by H\"{o}lder's inequality, with $\frac{1}{p^{-}}=\frac{1}{p(\cdot )}+\frac{1%
}{t(\cdot )}$. The second norm on the right-hand side is bounded if $m>\frac{%
2p^{-}}{t^{-}}(n+c_{\log }(1/t))$ (this is possible since $m$ can be taken
large enough). To show that, we investigate the corresponding modular:%
\begin{eqnarray*}
\varrho _{t(\cdot )}(2^{vn/t(x)}(1+2^{v}|x-\cdot |)^{-m/2p^{-}}) &=&\int_{%
\mathbb{R}^{n}}2^{vnt(y)/t(x)}(1+2^{v}|x-y|)^{-mt(y)/2p^{-}}dy \\
&\leq &2^{vn}\int_{\mathbb{R}^{n}}(1+2^{v}|x-y|)^{-(m-c_{\log
}(1/t))t^{-}/2p^{-}}dy<\infty ,
\end{eqnarray*}%
where we used Lemma \ref{DHR-lemma}. Again by the same lemma the first norm
is bounded by%
\begin{equation*}
\left\Vert 2^{v(\alpha (\cdot )+n\tau (\cdot ))}\varphi _{v}\ast f(\cdot
)(1+2^{v}|x-\cdot |)^{-h}\right\Vert _{p(\cdot )},
\end{equation*}%
where $h=\frac{m}{2p^{-}}-c_{\log }(\alpha +n\tau )$. Let now prove that
this expression is bounded. We investigate the corresponding modular:%
\begin{eqnarray}
&&\varrho _{p(\cdot )}(2^{v(\alpha (\cdot )+n\tau (\cdot ))}\varphi _{v}\ast
f(\cdot )(1+2^{v}|x-\cdot |)^{-h})  \notag \\
&=&\int_{\mathbb{R}^{n}}2^{v(\alpha (y)+n\tau (y))p(y)}|\varphi _{v}\ast
f(y)|^{p(y)}(1+2^{v}|x-y|)^{-hp(y)}dy  \notag \\
&=&\int_{|y-x|<2^{-v}}(\cdot \cdot \cdot )dy+\sum_{i=1}^{\infty
}\int_{2^{i-v}\leq |y-x|<2^{i-v+1}}(\cdot \cdot \cdot )dy  \notag \\
&\leq &\sum_{i=0}^{\infty }2^{-ihp^{-}}\int_{|y-x|<2^{i-v+1}}2^{v(\alpha
(y)+n\tau (y))p(y)}|\varphi _{v}\ast f(y)|^{p(y)}dy.  \label{est}
\end{eqnarray}%
Then the right-hand side of $\mathrm{\eqref{est}}$ is bounded by%
\begin{eqnarray*}
&&\sum_{i=0}^{\infty }2^{(n\tau ^{+}p^{+}-hp^{-})i}\int_{B(x,2^{i-v+1})}%
\frac{2^{v\alpha (y)p(y)}|\varphi _{v}\ast f(y)|^{p(y)}}{\left\vert
B(x,2^{i-v+1})\right\vert ^{p(y)\tau (y)}}dy \\
&\leq &\sum_{i=0}^{\infty }2^{(n\tau ^{+}p^{+}-hp^{-})i}\int_{B(x,2^{i-v+1})}%
\Big(\sum_{j=(v-i-1)^{+}}^{\infty }\Big(\frac{2^{j\alpha (y)q(y)}|\varphi
_{j}\ast f(y)|^{q(y)}}{\left\vert B(x,2^{i-v+1})\right\vert ^{q(y)\tau (y)}}%
\Big)\Big)^{p(y)/q(y)}dy
\end{eqnarray*}%
and since $\left\Vert f\right\Vert _{\mathfrak{F}_{p(\cdot ),q(\cdot
)}^{\alpha (\cdot ),\tau (\cdot )}}=1$, the lat term is bounded by%
\begin{equation*}
C\sum_{i=0}^{\infty }2^{(n\tau ^{+}p^{+}-hp^{-})i}<\infty
\end{equation*}%
for any $h>\tau ^{+}p^{+}/p^{-}$. The proof is completed by the scaling
argument.
\end{proof}

\begin{remark}
\label{new-est}$\mathrm{From}$ $\mathrm{this}$ $\mathrm{theorem}$ $\mathrm{we%
}$ $\mathrm{obtain}$%
\begin{equation}
2^{v(\alpha (x)+n(\tau (x)-1/p(x))}|\varphi _{v}\ast f(x)|\leq c\left\Vert
f\right\Vert _{\mathfrak{F}_{p(\cdot ),q(\cdot )}^{\alpha (\cdot ),\tau
(\cdot )}}  \label{emd}
\end{equation}%
$\mathrm{for}$ $\mathrm{any}$ $f\in \mathfrak{F}_{p(\cdot ),q(\cdot
)}^{\alpha (\cdot ),\tau (\cdot )}$, $x\in \mathbb{R}^{n}$, $\alpha ,\tau
\in C_{\mathrm{loc}}^{\log }$, $\mathbb{\tau }^{-}\geq 0$ $\mathrm{and}$ $%
p,q\in \mathcal{P}_{0}^{\log }$.
\end{remark}

In the following theorem we prove the possibility to define these spaces by
replacing $v\geq v_{P}^{+}$ by $v\geq 0$, in Definition \ref{B-F-def}. For
fixed exponents, see \cite{Si}.

\begin{theorem}
Let $\alpha ,\mathbb{\tau }\in C_{\mathrm{loc}}^{\log }$, $\mathbb{\tau }%
^{-}\geq 0$ and $p,q\in \mathcal{P}_{0}^{\log }$ with $0<q^{+},p^{+}<\infty $%
. If $\left( \mathbb{\tau }-\frac{1}{p}\right) ^{+}<0$ or\ $\left( \mathbb{%
\tau }-\frac{1}{p}\right) ^{+}\leq 0$\ and $q:=\infty $, then%
\begin{equation*}
\left\Vert f\right\Vert _{\mathfrak{F}_{p(\cdot ),q(\cdot )}^{\alpha (\cdot
),\tau (\cdot )}}^{\ast }=\sup_{P\in \mathcal{Q}}\Big\|\Big(\frac{2^{\alpha
\left( \cdot \right) }\varphi _{v}\ast f}{|P|^{\tau (\cdot )}}\chi _{P}\Big)%
_{v\geq 0}\Big\|_{L^{p(\cdot )}(\ell ^{q(\cdot )})},
\end{equation*}%
is an equivalent quasi-norm in $\mathfrak{F}_{p(\cdot ),q(\cdot )}^{\alpha
(\cdot ),\tau (\cdot )}$.
\end{theorem}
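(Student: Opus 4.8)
I would first dispose of the trivial inequality $\|f\|_{\mathfrak{F}_{p(\cdot),q(\cdot)}^{\alpha(\cdot),\tau(\cdot)}}\le\|f\|_{\mathfrak{F}_{p(\cdot),q(\cdot)}^{\alpha(\cdot),\tau(\cdot)}}^{\ast}$: for a dyadic cube $P$ with $|P|\ge1$ one has $v_{P}^{+}=0$, so the two expressions under the supremum coincide, while for $|P|<1$ the summation set $\{v\ge v_{P}^{+}\}=\{v\ge v_{P}\}$ is contained in $\{v\ge0\}$ and the $\ell^{q(x)}$ quasi-norm only grows when nonnegative terms are added. The real content is the reverse estimate, and by homogeneity it suffices to prove $\|f\|_{\mathfrak{F}_{p(\cdot),q(\cdot)}^{\alpha(\cdot),\tau(\cdot)}}^{\ast}\lesssim1$ whenever $\|f\|_{\mathfrak{F}_{p(\cdot),q(\cdot)}^{\alpha(\cdot),\tau(\cdot)}}=1$. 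So I fix a dyadic cube $P$; if $|P|\ge1$ the corresponding term of $\|\cdot\|^{\ast}$ is a term of $\|\cdot\|_{\mathfrak{F}_{p(\cdot),q(\cdot)}^{\alpha(\cdot),\tau(\cdot)}}$, hence $\le1$. If $|P|=2^{-nv_{P}}$ with $v_{P}\in\mathbb{N}$, I would split the inner $\ell^{q(\cdot)}$ quasi-norm into the two ranges $v\ge v_{P}$ and $0\le v<v_{P}$ (which costs only a constant depending on $p^{-}$ and $q^{-}$); the first block is exactly the $P$-term of $\|f\|_{\mathfrak{F}_{p(\cdot),q(\cdot)}^{\alpha(\cdot),\tau(\cdot)}}$, hence $\le1$, so everything reduces to bounding
\[
A:=\Big\|\Big(\frac{2^{v\alpha(\cdot)}\varphi_{v}\ast f}{|P|^{\tau(\cdot)}}\chi_{P}\Big)_{0\le v<v_{P}}\Big\|_{L^{p(\cdot)}(\ell^{q(\cdot)})}
\]
by an absolute constant.

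The heart of the argument is the control of these missing low frequencies, and for it I would invoke the pointwise estimate \eqref{emd} of Remark \ref{new-est}, which holds under exactly the present hypotheses on $\alpha,\tau,p,q$ and gives, since $\|f\|_{\mathfrak{F}_{p(\cdot),q(\cdot)}^{\alpha(\cdot),\tau(\cdot)}}=1$,
\[
\frac{2^{v\alpha(x)}|\varphi_{v}\ast f(x)|}{|P|^{\tau(x)}}\chi_{P}(x)\le c\,2^{nv_{P}\tau(x)}\,2^{vn(1/p(x)-\tau(x))}\chi_{P}(x),\qquad 0\le v<v_{P}.
\]
Both alternatives in the hypothesis force $\tau(x)\le1/p(x)$ a.e., so $n(1/p(x)-\tau(x))\ge0$, and the dichotomy is precisely what makes the sum over $v$ harmless. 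If $(\tau-\tfrac{1}{p})^{+}<0$, then $1/p(x)-\tau(x)\ge\delta:=-(\tau-\tfrac{1}{p})^{+}>0$ a.e., so $\sum_{v=0}^{v_{P}-1}2^{vn(1/p(x)-\tau(x))q(x)}\le c\,2^{v_{P}n(1/p(x)-\tau(x))q(x)}$ with $c=c(n,\delta,q^{-})$; raising this to the power $1/q(x)$ and using $0<q^{-}\le q^{+}<\infty$ produces the pointwise bound $c\,2^{nv_{P}\tau(x)}\,2^{v_{P}n(1/p(x)-\tau(x))}\chi_{P}(x)$ for the $\ell^{q(x)}$-quantity. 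If $(\tau-\tfrac{1}{p})^{+}\le0$ and $q=\infty$, the sequence $\big(2^{vn(1/p(x)-\tau(x))}\big)_{v}$ is nondecreasing, so its supremum over $0\le v<v_{P}$ equals $2^{(v_{P}-1)n(1/p(x)-\tau(x))}\le2^{v_{P}n(1/p(x)-\tau(x))}$, which gives the same bound. In either case
\[
A\le c\,\big\|2^{nv_{P}\tau(\cdot)}\,2^{v_{P}n(1/p(\cdot)-\tau(\cdot))}\chi_{P}\big\|_{p(\cdot)}=c\,\big\|2^{nv_{P}/p(\cdot)}\chi_{P}\big\|_{p(\cdot)}.
\]

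To finish, I would compute the modular $\varrho_{p(\cdot)}\big(2^{nv_{P}/p(\cdot)}\chi_{P}\big)=\int_{P}2^{nv_{P}}\,dx=2^{nv_{P}}|P|=1$, so the unit ball property yields $\|2^{nv_{P}/p(\cdot)}\chi_{P}\|_{p(\cdot)}\le1$ and hence $A\lesssim1$. Together with the first block this shows that the $P$-term of $\|f\|_{\mathfrak{F}_{p(\cdot),q(\cdot)}^{\alpha(\cdot),\tau(\cdot)}}^{\ast}$ is $\lesssim1$ uniformly in $P$, so $\|f\|_{\mathfrak{F}_{p(\cdot),q(\cdot)}^{\alpha(\cdot),\tau(\cdot)}}^{\ast}\lesssim1$, and a scaling argument removes the normalisation. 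The one genuinely delicate point is the middle step: one must show that the extra low-frequency block $0\le v<v_{P}$ contributes a quantity bounded uniformly in $P$ and $x$, and this is exactly where the sign condition on $\tau-1/p$ — or, in the borderline situation, the choice $q=\infty$ — enters the picture; everything else is routine bookkeeping with the quasi-norm of $L^{p(\cdot)}(\ell^{q(\cdot)})$ and with \eqref{emd}.
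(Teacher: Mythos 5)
Your proof is correct and follows essentially the same route as the paper: both reduce to the low-frequency block $0\le v<v_{P}$, control it pointwise by the embedding estimate \eqref{emd}, sum the resulting geometric series using the sign condition on $\tau-\frac{1}{p}$, and conclude from the modular computation $\varrho_{p(\cdot)}\big(2^{nv_{P}/p(\cdot)}\chi_{P}\big)=1$. The only difference is that you spell out the trivial direction and the $q=\infty$ case, which the paper dismisses as "easily solved."
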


\begin{proof}
Clearly, it suffices to prove that $\left\Vert f\right\Vert _{\mathfrak{F}%
_{p(\cdot ),q(\cdot )}^{\alpha (\cdot ),\tau (\cdot )}}^{\ast }\lesssim
\left\Vert f\right\Vert _{\mathfrak{F}_{p(\cdot ),q(\cdot )}^{\alpha (\cdot
),\tau (\cdot )}}$. In view of the proof of previous theorem, we have%
\begin{equation*}
2^{v(\alpha (x)+n(\tau (x)-1/p(x))}|\varphi _{v}\ast f(x)|\leq c\left\Vert
f\right\Vert _{\mathfrak{F}_{p(\cdot ),q(\cdot )}^{\alpha (\cdot ),\tau
(\cdot )}}
\end{equation*}%
for any $x\in \mathbb{R}^{n}$. Then 
\begin{eqnarray*}
&&\Big\|\Big(\dsum\limits_{v=0}^{v_{P}}\left\vert \frac{2^{v\alpha (\cdot
)}\varphi _{v}\ast f}{|P|^{\tau (\cdot )}}\right\vert ^{q(\cdot )}\Big)%
^{1/q(\cdot )}\chi _{P}\Big\|_{p(\cdot )} \\
&\lesssim &\left\Vert f\right\Vert _{\mathfrak{F}_{p(\cdot ),q(\cdot
)}^{\alpha (\cdot ),\tau (\cdot )}}\Big\|\Big(\dsum\limits_{v=0}^{v_{P}}%
\left\vert 2^{(v_{P}-v)(\tau (\cdot )-1/p(\cdot ))+nv_{P}/p(\cdot
)}\right\vert ^{q(\cdot )}\Big)^{1/q(\cdot )}\chi _{P}\Big\|_{p(\cdot )} \\
&\lesssim &\left\Vert f\right\Vert _{\mathfrak{F}_{p(\cdot ),q(\cdot
)}^{\alpha (\cdot ),\tau (\cdot )}}\left\Vert \left\vert 2^{nv_{P}/p(\cdot
)}\right\vert ^{q(\cdot )}\chi _{P}\right\Vert _{\frac{p(\cdot )}{q(\cdot )}%
}\lesssim 1,
\end{eqnarray*}%
since $\left( \mathbb{\tau }-\frac{1}{p}\right) ^{+}<0$. The case $q:=\infty 
$ can be easily solved.
\end{proof}

Let $\Phi $ and $\varphi $ satisfy, respectively $\mathrm{\eqref{Ass1}}$ and 
$\mathrm{\eqref{Ass2}}$. By \cite[pp. 130--131]{FJ90}, there exist \
functions $\Psi \in \mathcal{S}(\mathbb{R}^{n})$ satisfying $\mathrm{%
\eqref{Ass1}}$ and $\psi \in \mathcal{S}(\mathbb{R}^{n})$ satisfying $%
\mathrm{\eqref{Ass2}}$ such that for all $\xi \in \mathbb{R}^{n}$%
\begin{equation}
\mathcal{F}\widetilde{\Phi }(\xi )\mathcal{F}\Psi (\xi )+\sum_{j=1}^{\infty }%
\mathcal{F}\widetilde{\varphi }(2^{-j}\xi )\mathcal{F}\psi (2^{-j}\xi
)=1,\quad \xi \in \mathbb{R}^{n}.  \label{Ass4}
\end{equation}

Furthermore, we have the following identity for all $f\in \mathcal{S}%
^{\prime }(\mathbb{R}^{n})$; see \cite[(12.4)]{FJ90}%
\begin{eqnarray*}
f &=&\Psi \ast \widetilde{\Phi }\ast f+\sum_{v=1}^{\infty }\psi _{v}\ast 
\widetilde{\varphi }_{v}\ast f \\
&=&\sum_{m\in \mathbb{Z}^{n}}\widetilde{\Phi }\ast f(m)\Psi (\cdot
-m)+\sum_{v=1}^{\infty }2^{-vn}\sum_{m\in \mathbb{Z}^{n}}\widetilde{\varphi }%
_{v}\ast f(2^{-v}m)\psi _{v}(\cdot -2^{-v}m).
\end{eqnarray*}%
Recall that the $\varphi $-transform $S_{\varphi }$ is defined by setting $%
(S_{\varphi })_{0,m}=\langle f,\Phi _{m}\rangle $ where $\Phi _{m}(x)=\Phi
(x-m)$ and $(S_{\varphi })_{v,m}=\langle f,\varphi _{v,m}\rangle $ where $%
\varphi _{v,m}(x)=2^{vn/2}\varphi (2^{v}x-m)$ and $v\in \mathbb{N}$. The
inverse $\varphi $-transform $T_{\psi }$ is defined by 
\begin{equation*}
T_{\psi }\lambda =\sum_{m\in \mathbb{Z}^{n}}\lambda _{0,m}\Psi
_{m}+\sum_{v=1}^{\infty }\sum_{m\in \mathbb{Z}^{n}}\lambda _{v,m}\psi _{v,m},
\end{equation*}%
where $\lambda =\{\lambda _{v,m}\in \mathbb{C}:v\in \mathbb{N}_{0},m\in 
\mathbb{Z}^{n}\}$, see \cite{FJ90}.

For any $\gamma \in \mathbb{Z}$, we put%
\begin{equation*}
\left\Vert f\right\Vert _{\mathfrak{F}_{p(\cdot ),q(\cdot )}^{\alpha (\cdot
),\tau (\cdot )}}^{\ast }:=\sup_{P\in \mathcal{Q}}\Big\|\Big(\frac{%
2^{v\alpha \left( \cdot \right) }\varphi _{v}\ast f}{|P|^{\tau (\cdot )}}%
\chi _{P}\Big)_{v\geq v_{P}^{+}-\gamma }\Big\|_{L^{p(\cdot )}(\ell ^{q(\cdot
)})}<\infty
\end{equation*}%
where $\varphi _{-\gamma }$ is replaced by $\Phi _{-\gamma }$.

\begin{lemma}
\label{new-equinorm3}Let $\alpha ,\mathbb{\tau }\in C_{\mathrm{loc}}^{\log }$%
, $\mathbb{\tau }^{-}>0$, $p,q\in \mathcal{P}_{0}^{\log }$ and $%
0<q^{+},p^{+}<\infty $. The quasi-norms $\left\Vert f\right\Vert _{\mathfrak{%
F}_{p(\cdot ),q(\cdot )}^{\alpha (\cdot ),\tau (\cdot )}}^{\ast }$ and $%
\left\Vert f\right\Vert _{\mathfrak{F}_{p(\cdot ),q(\cdot )}^{\alpha (\cdot
),\tau (\cdot )}}$ are equivalent with equivalent constants depending on $%
\gamma $.
\end{lemma}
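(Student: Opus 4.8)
The plan is to prove $\left\Vert f\right\Vert _{\mathfrak{F}_{p(\cdot ),q(\cdot )}^{\alpha (\cdot ),\tau (\cdot )}}^{\ast }\approx \left\Vert f\right\Vert _{\mathfrak{F}_{p(\cdot ),q(\cdot )}^{\alpha (\cdot ),\tau (\cdot )}}$; I write $\Vert f\Vert _{\mathfrak{F}}$ and $\Vert f\Vert _{\mathfrak{F}}^{\ast }$ for the two sides. It suffices to treat $\gamma \in \mathbb{N}$: for $\gamma =0$ the quasi-norms coincide, and $\gamma <0$ is handled by the same device — here placing the low-pass $\Phi _{-\gamma }$ at the smallest index is exactly what makes the shifted Littlewood--Paley system resolve all frequencies, so that only boundedly many band-limited pieces must be recovered. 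The inequality $\Vert f\Vert _{\mathfrak{F}}\lesssim \Vert f\Vert _{\mathfrak{F}}^{\ast }$ is the easier one: for $|P|<1$ the $\ell ^{q(\cdot )}$-sequence defining $\Vert f\Vert _{\mathfrak{F}}$ is a sub-sequence of the one defining $\Vert f\Vert _{\mathfrak{F}}^{\ast }$, and for $|P|\geq 1$ the only discrepancy is the bottom entry $\Phi \ast f$, which is treated exactly as the low-frequency summands below (via \eqref{Ass4} and Lemma \ref{key-estimate1}); so I concentrate on $\Vert f\Vert _{\mathfrak{F}}^{\ast }\lesssim \Vert f\Vert _{\mathfrak{F}}$.

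Fix a dyadic cube $P$ and put $v_{P}=-\log _{2}l(P)$. Since $q^{+}<\infty $, the quasi-triangle inequality holds in $\ell ^{q(\cdot )}$ with constant depending only on $q^{-}$, so I split $\big( \tfrac{2^{v\alpha (\cdot )}\varphi _{v}\ast f}{|P|^{\tau (\cdot )}}\chi _{P}\big) _{v\geq v_{P}^{+}-\gamma }$ into a \emph{principal part} — the entries with $v\geq \max (v_{P}^{+},1)$ — and a \emph{remainder} — the at most $\gamma +1$ entries with $v_{P}^{+}-\gamma \leq v\leq \max (v_{P}^{+},1)-1$ (the bottom one being $\Phi _{-\gamma }\ast f$ when $|P|\geq 1$). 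The principal part is, for $|P|<1$, exactly the sequence defining $\Vert f\Vert _{\mathfrak{F}}$ for $P$, and for $|P|\geq 1$ a sub-sequence of it; in either case its $L^{p(\cdot )}(\ell ^{q(\cdot )})$-norm is $\leq \Vert f\Vert _{\mathfrak{F}}$ after taking $\sup _{P}$. It remains to bound $\big\Vert \tfrac{2^{v\alpha (\cdot )}\varphi _{v}\ast f}{|P|^{\tau (\cdot )}}\chi _{P}\big\Vert _{p(\cdot )}\leq c(\gamma )\Vert f\Vert _{\mathfrak{F}}$ for each of the boundedly many remainder entries.

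For a remainder entry of index $v\geq 1$ — which forces $|P|<1$ — estimate \eqref{emd} gives $\tfrac{2^{v\alpha (x)}|\varphi _{v}\ast f(x)|}{|P|^{\tau (x)}}\leq c\,\Vert f\Vert _{\mathfrak{F}}\,2^{n(v_{P}-v)\tau (x)}2^{vn/p(x)}$ on $P$; since $1\leq v_{P}-v\leq \gamma $, one has $2^{n(v_{P}-v)\tau (x)}\leq 2^{n\gamma \tau ^{+}}$, and summing the geometric factor over the $\leq \gamma $ indices shows the $\ell ^{q(x)}$-norm of the remainder is $\lesssim _{\gamma }\Vert f\Vert _{\mathfrak{F}}\,2^{v_{P}n/p(x)}$ on $P$; the identity $\varrho _{p(\cdot )}\big(2^{v_{P}n/p(\cdot )}\chi _{P}\big)=2^{v_{P}n}|P|=1$ then gives $\Vert 2^{v_{P}n/p(\cdot )}\chi _{P}\Vert _{p(\cdot )}\leq 1$, hence this part is $\lesssim _{\gamma }\Vert f\Vert _{\mathfrak{F}}$. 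Every other remainder entry is a low-frequency piece $g$ (one of the $\varphi _{v}\ast f$ with $v\leq 0$, or $\Phi _{-\gamma }\ast f$), occurring only when $v_{P}\leq \gamma $ and satisfying $\mathrm{supp}\,\mathcal{F}g\subset \overline{B(0,2)}$. For such $g$, convolving the reproducing identity \eqref{Ass4} with the compactly supported multiplier defining $g$ kills all but boundedly many channels, and a smooth frequency partition gives $g=\sum_{i}A_{i}\ast h_{i}$ with fixed $A_{i}\in \mathcal{S}(\mathbb{R}^{n})$ and $h_{i}$ in the fixed finite set $\{\Phi \ast f,\varphi _{1}\ast f,\varphi _{2}\ast f\}$. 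Each $h_{i}$ is band-limited and $\Vert h_{i}\Vert _{\widetilde{L_{\tau (\cdot )}^{p(\cdot )}}}\lesssim \Vert f\Vert _{\mathfrak{F}}$, because for every dyadic $Q$ with $|Q|\geq 1$ the quantity $\Vert \tfrac{h_{i}}{|Q|^{\tau }}\chi _{Q}\Vert _{p(\cdot )}$ is, up to the bounded factor $2^{j\alpha (\cdot )}$, one of the entries of the $\ell ^{q(\cdot )}$-sequence defining $\Vert f\Vert _{\mathfrak{F}}$ for $Q$, hence bounded by its $\ell ^{q(\cdot )}$-norm. Finally, writing $h_{i}=\omega _{\nu }\ast h_{i}$ for a suitable $\omega \in \mathcal{S}(\mathbb{R}^{n})$ with $\mathrm{supp}\,\mathcal{F}\omega \subset \overline{B(0,1)}$ and a fixed $\nu $ (possible since $h_{i}$ is band-limited), Lemma \ref{key-estimate1} applied with $f$ replaced by $h_{i}$ and $\theta $ chosen so that $\theta _{\nu }=A_{i}$ — this is where $\tau ^{-}>0$ is used, and where $|P|\geq 1$ is needed in the difficult case — gives $\big\Vert \tfrac{A_{i}\ast h_{i}}{|P|^{\tau }}\chi _{P}\big\Vert _{p(\cdot )}\lesssim \Vert \omega _{\nu }\ast h_{i}\Vert _{\widetilde{L_{\tau (\cdot )}^{p(\cdot )}}}=\Vert h_{i}\Vert _{\widetilde{L_{\tau (\cdot )}^{p(\cdot )}}}\lesssim \Vert f\Vert _{\mathfrak{F}}$ when $|P|\geq 1$, and the same for $2^{-\gamma n}\leq |P|<1$ after restricting $\chi _{P}$ to the unit dyadic cube $Q_{P}\supset P$ and absorbing the then bounded factor $|P|^{-\tau (\cdot )}$ into the constant. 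Summing over the remainder and taking $\sup _{P}$ completes the proof.

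The main obstacle is precisely this last family of low-frequency summands on large cubes $|P|\geq 1$: there a crude pointwise estimate is useless, because $\Vert \chi _{P}\Vert _{p(\cdot )}$ — and even $\Vert |P|^{-\tau (\cdot )}\chi _{P}\Vert _{p(\cdot )}$ — grows without bound as $|P|\to \infty $, so one is forced to exploit the internal structure of the pieces through Lemma \ref{key-estimate1} (where $\tau ^{-}>0$ is essential) and the Calder\'{o}n decomposition \eqref{Ass4}; the one delicate, if routine, point is the Fourier-support bookkeeping guaranteeing that only finitely many — and, thanks to the principal-part comparison, $\gamma $-independently many in the norm that matters — Littlewood--Paley channels are involved.
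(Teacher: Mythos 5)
Your proof is correct and follows essentially the same strategy as the paper's: isolate the at most $\gamma+1$ extra (or discrepant) entries, recover each low-frequency piece from the finitely many channels $\Phi\ast f,\varphi_{1}\ast f,\dots$ via a Calder\'on-type factorization, and control the resulting convolutions on large cubes through Lemma \ref{key-estimate1} together with the bound $\Vert \varphi_{j}\ast f\Vert_{\widetilde{L_{\tau(\cdot)}^{p(\cdot)}}}\lesssim \Vert f\Vert_{\mathfrak{F}_{p(\cdot),q(\cdot)}^{\alpha(\cdot),\tau(\cdot)}}$; you also correctly identify this as the genuinely nontrivial part. The one place you diverge is the mid-frequency remainder entries $1\leq v\leq v_{P}-1$ on small cubes: the paper compares $P$ with the enclosing dyadic cubes $P(\gamma)$ (resp. $P(2^{v_{P}})$) and absorbs the harmless factor $2^{n v_{P}\tau^{+}}\leq 2^{n\gamma\tau^{+}}$, whereas you invoke the pointwise embedding \eqref{emd} together with the modular identity $\varrho_{p(\cdot)}(2^{nv_{P}/p(\cdot)}\chi_{P})=1$. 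Both are legitimate at this point of the paper (Remark \ref{new-est} precedes the lemma); the paper's enclosing-cube argument is more self-contained, while yours is shorter at the price of importing \eqref{emd}.
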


\begin{proof}
{The proof follows the ideas in} \cite{WYY}\ and \cite{D6}. By similarity,
we only consider the case $\gamma >0$. First let us prove that $\left\Vert
f\right\Vert _{\mathfrak{F}_{p(\cdot ),q(\cdot )}^{\alpha (\cdot ),\tau
(\cdot )}}^{\ast }\leq c\left\Vert f\right\Vert _{\mathfrak{F}_{p(\cdot
),q(\cdot )}^{\alpha (\cdot ),\tau (\cdot )}}$. By the scaling argument, it
suffices to consider the case $\left\Vert f\right\Vert _{\mathfrak{F}%
_{p(\cdot ),q(\cdot )}^{\alpha (\cdot ),\tau (\cdot )}}=1$ and show that the
modular of $f$ on the left-hand side is bounded. In particular, we will show
that 
\begin{equation*}
\Big\|\Big(\sum_{v=v_{P}^{+}-\gamma }^{\infty }\left\vert \frac{2^{v\alpha
(\cdot )}\varphi _{v}\ast f}{|P|^{\tau (\cdot )}}\right\vert ^{q(\cdot )}%
\Big)^{1/q(\cdot )}\chi _{P}\Big\|_{p(\cdot )}\leq c
\end{equation*}%
for any dyadic cube $P$. We will use the same arguments of \cite{D6}. As in 
\cite[Lemma 2.6]{WYY}, it suffices to prove that for all dyadic cube $P$
with $l(P)\geq 1$,%
\begin{equation*}
I_{P}=\Big\|\Big(\sum_{v=-\gamma }^{0}\left\vert \frac{2^{v\alpha (\cdot
)}\varphi _{v}\ast f}{|P|^{\tau (\cdot )}}\right\vert ^{q(\cdot )}\Big)%
^{1/q(\cdot )}\chi _{P}\Big\|_{p(\cdot )}\leq c
\end{equation*}%
and for all dyadic cube $P$ with $l(P)<1$,%
\begin{equation*}
J_{P}=\Big\|\Big(\sum_{v=v_{P}-\gamma }^{v_{P}-1}\left\vert \frac{2^{v\alpha
(\cdot )}\varphi _{v}\ast f}{|P|^{\tau (\cdot )}}\right\vert ^{q(\cdot )}%
\Big)^{1/q(\cdot )}\chi _{P}\Big\|_{p(\cdot )}\leq c.
\end{equation*}%
The estimate of $I_{P}$, clearly follows from the inequality $\left\Vert 
\frac{\varphi _{v}\ast f}{|P|^{\tau (\cdot )}}\chi _{P}\right\Vert _{p(\cdot
)}\leq c$ for any $v=-\gamma ,...,0$ and any dyadic cube $P$ with $l(P)\geq
1 $. By $\mathrm{\eqref{Ass1}}$ and $\mathrm{\eqref{Ass2}}$, there exist $%
\omega _{v}\in \mathcal{S}(\mathbb{R}^{n})$, $v=-\gamma ,\cdot \cdot \cdot
,-1$ and $\eta _{1},\eta _{2}\in \mathcal{S}(\mathbb{R}^{n})$ such that%
\begin{equation*}
\varphi _{v}=\omega _{v}\ast \Phi ,\quad v=-\gamma ,\cdot \cdot \cdot ,-1%
\text{\quad and\quad }\varphi =\varphi _{0}=\eta _{1}\ast \Phi +\eta
_{2}\ast \varphi _{1}.
\end{equation*}%
Hence $\varphi _{v}\ast f=\omega _{v}\ast \Phi \ast f$ for $v=-\gamma
,...,-1 $ and $\varphi _{0}\ast f=\eta _{1}\ast \Phi \ast f+\eta _{2}\ast
\varphi _{1}\ast f$. Applying Lemma \ref{key-estimate1}, $\mathrm{%
\eqref{mod-est}}$ and the fact that $\left\Vert f\right\Vert _{\mathfrak{F}%
_{p(\cdot ),q(\cdot )}^{\alpha (\cdot ),\tau (\cdot )}}\leq 1$ to estimate $%
\left\Vert \frac{\varphi _{v}\ast f}{|P|^{\tau (\cdot )}}\chi
_{P}\right\Vert _{p(\cdot )}$ by 
\begin{equation*}
C\left\Vert \Phi \ast f\right\Vert _{\widetilde{L_{\tau (\cdot )}^{p(\cdot )}%
}}+C\left\Vert \varphi _{1}\ast f\right\Vert _{\widetilde{L_{\tau (\cdot
)}^{p(\cdot )}}}\leq c.
\end{equation*}%
To estimate $J_{P}$, denote by $P(\gamma )$ the dyadic cube containing $P$
with $l(P(\gamma ))=2^{\gamma }l(P)$. If $v_{P}\geq \gamma +1$, applying the
fact that $v_{P(\gamma )}=v_{P}-\gamma $ and $P\subset P(\gamma )$, we then
have%
\begin{equation*}
J_{P}\lesssim \Big\|\Big(\sum_{v=v_{P(\gamma )}}^{v_{P}-1}\left\vert \frac{%
2^{v\alpha (\cdot )}\varphi _{v}\ast f}{|P(\gamma )|^{\tau (\cdot )}}%
\right\vert ^{q(\cdot )}\Big)^{1/q(\cdot )}\chi _{P(\gamma )}\Big\|_{p(\cdot
)}\leq c.
\end{equation*}%
If $1\leq v_{P}\leq \gamma $, we write $J_{P}=\sum_{v=v_{P}-\gamma
}^{-1}\cdot \cdot \cdot +\sum_{v=0}^{v_{P}-1}\cdot \cdot \cdot
=J_{P}^{1}+J_{P}^{2}$. Let $P(2^{v_{P}})$ the dyadic cube containing $P$
with $l(P(2^{v_{P}}))=2^{v_{P}}l(P)=1$. By the fact that $\frac{%
|P(2^{v_{P}})|^{\tau (\cdot )}}{|P|^{\tau (\cdot )}}\lesssim 2^{nv_{P}\tau
^{+}}\lesssim c(\gamma )$ we have%
\begin{equation*}
J_{P}^{2}\lesssim \Big\|\Big(\sum_{v=v_{P(2^{v_{P}})}}^{v_{P}-1}\left\vert 
\frac{2^{v\alpha (\cdot )}\varphi _{v}\ast f}{|P(2^{v_{P}})|^{\tau (\cdot )}}%
\right\vert ^{q(\cdot )}\Big)^{1/q(\cdot )}\chi _{P(2^{v_{P}})}\Big\|%
_{p(\cdot )}\leq c.
\end{equation*}%
By a similar argument to the estimate for $I_{P}$, we see that $%
J_{P}^{1}\leq c$. For the converse estimate, it suffices to show that 
\begin{equation*}
\Big\|\Big|\frac{\Phi \ast f}{|P|^{\tau (\cdot )}}\Big|^{q(\cdot )}\chi _{P}%
\Big\|_{p(\cdot )/q(\cdot )}\leq c
\end{equation*}%
for all $P\in \mathcal{Q}$ with $l(P)\geq 1$ and all $f\in \mathfrak{F}%
_{p(\cdot ),q(\cdot )}^{\alpha (\cdot ),\tau (\cdot )}$ with $\left\Vert
f\right\Vert _{\mathfrak{F}_{p(\cdot ),q(\cdot )}^{\alpha (\cdot ),\tau
(\cdot )}}^{\ast }\leq 1$. This claim can be reformulated as showing that $%
\left\Vert \frac{\Phi \ast f}{|P|^{\tau (\cdot )}}\chi _{P}\right\Vert
_{p(\cdot )}\leq c$. Using the fact that there exist $\varrho _{v}\in 
\mathcal{S}(\mathbb{R}^{n})$, $v=-\gamma ,$\textperiodcentered
\textperiodcentered \textperiodcentered $,1$, such that $\Phi \ast f=\varrho
_{-\gamma }\ast \Phi _{-\gamma }\ast f+\sum_{v=1-\gamma }^{1}\varrho
_{v}\ast \varphi _{v}\ast f$, see \cite[p. 130]{FJ90}. Applying Lemma \ref%
{key-estimate1} we obtain%
\begin{equation*}
\left\Vert \varrho _{-\gamma }\ast \Phi _{-\gamma }\ast f\right\Vert _{%
\widetilde{L_{\tau (\cdot )}^{p(\cdot )}}}\lesssim \left\Vert \Phi _{-\gamma
}\ast f\right\Vert _{\widetilde{L_{\tau (\cdot )}^{p(\cdot )}}}\leq c,
\end{equation*}%
and%
\begin{equation*}
\left\Vert \varrho _{v}\ast \varphi _{v}\ast f\right\Vert _{\widetilde{%
L_{\tau (\cdot )}^{p(\cdot )}}}\lesssim \left\Vert \varphi _{v}\ast
f\right\Vert _{\widetilde{L_{\tau (\cdot )}^{p(\cdot )}}}\leq c,\quad
v=1-\gamma ,\cdot \cdot \cdot ,1,
\end{equation*}%
by using $\mathrm{\eqref{mod-est}}$ and the fact that $\left\Vert
f\right\Vert _{\mathfrak{F}_{p(\cdot ),q(\cdot )}^{\alpha (\cdot ),\tau
(\cdot )}}^{\ast }\leq 1$. The proof is complete.
\end{proof}

\begin{definition}
\label{sequence-space}Let $p,q\in \mathcal{P}_{0}$, $\tau :\mathbb{R}%
^{n}\rightarrow \mathbb{R}^{+}$\ and let $\alpha $ $:\mathbb{R}%
^{n}\rightarrow \mathbb{R}$. Then for all complex valued sequences $\lambda
=\{\lambda _{v,m}\in \mathbb{C}\}_{v\in \mathbb{N}_{0},m\in \mathbb{Z}^{n}}$
we define%
\begin{equation*}
\mathfrak{f}_{p(\cdot ),q(\cdot )}^{\alpha (\cdot ),\tau (\cdot )}:=\Big\{%
\lambda :\left\Vert \lambda \right\Vert _{\mathfrak{f}_{p(\cdot ),q(\cdot
)}^{\alpha (\cdot ),\tau (\cdot )}}<\infty \Big\},
\end{equation*}%
where%
\begin{equation*}
\left\Vert \lambda \right\Vert _{\mathfrak{f}_{p(\cdot ),q(\cdot )}^{\alpha
(\cdot ),\tau (\cdot )}}:=\sup_{P\in \mathcal{Q}}\Big\|\Big(\frac{%
\sum\limits_{m\in \mathbb{Z}^{n}}2^{v(\alpha \left( \cdot \right)
+n/2)}\lambda _{v,m}\chi _{v,m}}{|P|^{\tau (\cdot )}}\chi _{P}\Big)_{v\geq
v_{P}^{+}}\Big\|_{L^{p(\cdot )}(\ell ^{q(\cdot )})}.
\end{equation*}
\end{definition}

If we replace dyadic cubes $P$ by arbitrary balls $B_{J}$ of $\mathbb{R}^{n}$
with $J\in \mathbb{Z}$, we then obtain equivalent quasi-norms, where the
supremum is taken over all $J\in \mathbb{Z}$\ and all balls $B_{J}$\ of $%
\mathbb{R}^{n}$. As in \cite{D6}, we obtain the following two proprties.

\begin{lemma}
Let $\alpha ,\mathbb{\tau }\in C_{\mathrm{loc}}^{\log }$, $\tau ^{-}\geq 0$, 
$p,q\in \mathcal{P}_{0}^{\log }$, $0<q^{+},p^{+}<\infty $, $v\in \mathbb{N}%
_{0},m\in \mathbb{Z}^{n}$, $x\in Q_{v,m}$ and $\lambda \in \mathfrak{f}%
_{p(\cdot ),q(\cdot )}^{\alpha (\cdot ),\tau (\cdot )}$. Then there exists $%
c>0$ independent of $v$ and $m$ such that%
\begin{equation*}
|\lambda _{v,m}|\leq c\text{ }2^{-v(\alpha (x)+n/2)}|Q_{v,m}|^{\tau
(x)}\left\Vert \lambda \right\Vert _{\mathfrak{f}_{p(\cdot ),q(\cdot
)}^{\alpha (\cdot ),\tau (\cdot )}}\left\Vert \chi _{v,m}\right\Vert
_{p(\cdot )}^{-1}.
\end{equation*}
\end{lemma}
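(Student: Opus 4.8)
The plan is to extract the pointwise bound on $|\lambda_{v,m}|$ directly from the definition of $\left\Vert\lambda\right\Vert_{\mathfrak{f}_{p(\cdot),q(\cdot)}^{\alpha(\cdot),\tau(\cdot)}}$ by testing the supremum on a single well-chosen dyadic cube, namely $P=Q_{v,m}$ itself (or an enclosing cube of volume $\geq 1$ when $v$ is large; see below). Fix $v\in\mathbb{N}_0$, $m\in\mathbb{Z}^n$ and $x\in Q_{v,m}$. Since $Q_{v,m}$ is a dyadic cube, it is admissible in the supremum defining the $\mathfrak{f}$-norm, and for this particular $P$ we have $v_P^+\le v$, so the index $v$ appears in the $\ell^{q(\cdot)}$-sum. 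On $Q_{v,m}$ only the term $m'=m$ of $\sum_{m'}2^{v(\alpha(\cdot)+n/2)}\lambda_{v,m'}\chi_{v,m'}$ survives (the dyadic cubes $Q_{v,m'}$ at level $v$ are disjoint), so dropping all other summands in $v'$ and $m'$ gives the lower bound
\begin{equation*}
\left\Vert\lambda\right\Vert_{\mathfrak{f}_{p(\cdot),q(\cdot)}^{\alpha(\cdot),\tau(\cdot)}}\;\geq\;\Big\|\frac{2^{v(\alpha(\cdot)+n/2)}|\lambda_{v,m}|\,\chi_{Q_{v,m}}}{|Q_{v,m}|^{\tau(\cdot)}}\Big\|_{p(\cdot)}.
\end{equation*}

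Next I would pull the (variable) scalars out of the $L^{p(\cdot)}$-norm by comparing them to their values at the fixed point $x\in Q_{v,m}$. The functions $\alpha$ and $\tau$ are in $C_{\mathrm{loc}}^{\log}$, and $Q_{v,m}$ has diameter comparable to $2^{-v}$, so for any $y\in Q_{v,m}$ one has $|\alpha(y)-\alpha(x)|\lesssim c_{\log}(\alpha)/\log(e+2^v)$, hence $2^{v\alpha(y)}\approx 2^{v\alpha(x)}$ with constants independent of $v,m$ (the standard estimate behind Lemma \ref{DHR-lemma}); similarly $|Q_{v,m}|^{\tau(y)}\approx|Q_{v,m}|^{\tau(x)}$, using $\tau^{-}\ge 0$ and that $|Q_{v,m}|=2^{-vn}$. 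Therefore
\begin{equation*}
\Big\|\frac{2^{v(\alpha(\cdot)+n/2)}|\lambda_{v,m}|\,\chi_{Q_{v,m}}}{|Q_{v,m}|^{\tau(\cdot)}}\Big\|_{p(\cdot)}\;\approx\;2^{v(\alpha(x)+n/2)}|Q_{v,m}|^{-\tau(x)}|\lambda_{v,m}|\,\big\|\chi_{v,m}\big\|_{p(\cdot)},
\end{equation*}
and rearranging yields exactly the claimed inequality $|\lambda_{v,m}|\le c\,2^{-v(\alpha(x)+n/2)}|Q_{v,m}|^{\tau(x)}\left\Vert\lambda\right\Vert_{\mathfrak{f}_{p(\cdot),q(\cdot)}^{\alpha(\cdot),\tau(\cdot)}}\left\Vert\chi_{v,m}\right\Vert_{p(\cdot)}^{-1}$.

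The one point that needs a little care — and the main (though minor) obstacle — is the admissibility of $P=Q_{v,m}$ together with the index constraint: the supremum in Definition \ref{sequence-space} runs only over dyadic cubes $P\in\mathcal{Q}$, and the $\ell^{q(\cdot)}$-sum starts at $v_P^{+}=\max(v_P,0)$. When $v_P=v\ge 0$ (i.e. $l(Q_{v,m})\le 1$) this is immediate since $v\ge v_P^{+}=v$. When $v=0$ we may simply take $P=Q_{0,m}$, for which $v_P^{+}=0=v$. The only genuinely degenerate case would be if one wanted $v<v_P^{+}$, which never happens here because $v_P=v$. So no enclosing-cube trick is actually needed; testing on $P=Q_{v,m}$ always works, and the proof is complete. (I expect the author's proof to proceed along precisely these lines, invoking the log-Hölder continuity of $\alpha$ and $\tau$ to localize the exponents, as in \cite{D6}.)
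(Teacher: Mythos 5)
Your proof is correct and is exactly the standard argument the paper has in mind (the paper itself omits the proof, deferring to \cite{D6}, where the analogous lemma is proved in precisely this way): test the supremum on $P=Q_{v,m}$, keep only the single term $v'=v$, $m'=m$, and use the log-H\"older continuity of $\alpha$ and $\tau$ to replace $2^{v\alpha(\cdot)}|Q_{v,m}|^{-\tau(\cdot)}$ by its value at $x$, up to constants uniform in $v$ and $m$. Your observation that $v_P^{+}=v$ for $P=Q_{v,m}$ with $v\in\mathbb{N}_0$, so no enclosing-cube trick is needed, is also right.
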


\begin{lemma}
Let $\alpha ,\mathbb{\tau }\in C_{\mathrm{loc}}^{\log }$, $\mathbb{\tau }%
^{-}\geq 0$, $p,q\in \mathcal{P}_{0}^{\log }$\ with $0<q^{+},p^{+}<\infty $
and $\Psi $, $\psi \in \mathcal{S}(\mathbb{R}^{n})$ satisfy, respectively, $%
\mathrm{\eqref{Ass1}}$ and $\mathrm{\eqref{Ass2}}$. Then for all $\lambda
\in \mathfrak{f}_{p(\cdot ),q(\cdot )}^{\alpha (\cdot ),\tau (\cdot )}$%
\begin{equation*}
T_{\psi }\lambda :=\sum_{m\in \mathbb{Z}^{n}}\lambda _{0,m}\Psi
_{m}+\sum_{v=1}^{\infty }\sum_{m\in \mathbb{Z}^{n}}\lambda _{v,m}\psi _{v,m},
\end{equation*}%
converges in $\mathcal{S}^{\prime }(\mathbb{R}^{n})$; moreover, $T_{\psi }:%
\mathfrak{f}_{p(\cdot ),q(\cdot )}^{\alpha (\cdot ),\tau (\cdot
)}\rightarrow \mathcal{S}^{\prime }(\mathbb{R}^{n})$ is continuous.
\end{lemma}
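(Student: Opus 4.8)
The plan is to show that the series defining $T_\psi\lambda$ converges in $\mathcal{S}'(\mathbb{R}^n)$ and that the linear map is continuous by testing against an arbitrary $\phi\in\mathcal{S}(\mathbb{R}^n)$ and producing a bound of the form $|\langle T_\psi\lambda,\phi\rangle|\le c\, p_N(\phi)\,\|\lambda\|_{\mathfrak{f}_{p(\cdot),q(\cdot)}^{\alpha(\cdot),\tau(\cdot)}}$ for some Schwartz seminorm $p_N$. The first step is to invoke the previous lemma, which gives the pointwise coefficient estimate $|\lambda_{v,m}|\le c\,2^{-v(\alpha(x)+n/2)}|Q_{v,m}|^{\tau(x)}\|\lambda\|_{\mathfrak{f}_{p(\cdot),q(\cdot)}^{\alpha(\cdot),\tau(\cdot)}}\|\chi_{v,m}\|_{p(\cdot)}^{-1}$ for $x\in Q_{v,m}$. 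Since $|Q_{v,m}|=2^{-vn}$ and the localisation of $\psi_{v,m}$ is near $2^{-v}m$, I would take a representative $x=x_{Q_{v,m}}=2^{-v}m$ (or the center) and reduce everything to controlling $2^{-v(\alpha(x_{Q_{v,m}})+n/2)}2^{-vn\tau(x_{Q_{v,m}})}\|\chi_{v,m}\|_{p(\cdot)}^{-1}$.

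Next I would estimate $\|\chi_{v,m}\|_{p(\cdot)}^{-1}$ from below/above using the norm–modular relation for $L^{p(\cdot)}$: on a cube of side $2^{-v}\le 1$ one has $\|\chi_{v,m}\|_{p(\cdot)}\approx 2^{-vn/p(x_{Q_{v,m}})}$ up to constants controlled by $c_{\log}(1/p)$ (this is the standard estimate that follows from Theorem \ref{DHHR-estimate} / the unit ball property), while for $v=0$ the norm is bounded above and below by constants times a polynomially decaying factor in $|m|$ coming from the log-Hölder decay. Plugging in, the relevant factor behaves like $2^{-v(\alpha(x_{Q_{v,m}})+n/2)+vn\tau(x_{Q_{v,m}})-vn/p(x_{Q_{v,m}})}$; since $\alpha,\tau$ are locally log-Hölder hence bounded, and $1/p$ is bounded, and $\tau^-\ge 0$, this is dominated by $c\,2^{-v(n/2+\alpha^- )}2^{vn\tau^+}$, which is summable in $v$ once $\psi$'s moment/decay properties absorb any polynomial growth. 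The other half of the argument is the classical one from \cite{FJ90}: because $\psi$ has all vanishing moments (its Fourier transform is supported away from the origin by \eqref{Ass2}) and $\phi$ is Schwartz, $|\langle \psi_{v,m},\phi\rangle|\le c_N\,2^{-vn/2}2^{-vL}(1+|2^{-v}m|)^{-N}$ for every $L,N$, where $c_N$ is a finite sum of Schwartz seminorms of $\phi$; and similarly $|\langle\Psi_m,\phi\rangle|\le c_N(1+|m|)^{-N}$.

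Combining, $\langle T_\psi\lambda,\phi\rangle=\sum_{m}\lambda_{0,m}\langle\Psi_m,\phi\rangle+\sum_{v\ge1}\sum_m\lambda_{v,m}\langle\psi_{v,m},\phi\rangle$ is bounded in absolute value by
\begin{equation*}
c\,\|\lambda\|_{\mathfrak{f}_{p(\cdot),q(\cdot)}^{\alpha(\cdot),\tau(\cdot)}}\sum_{v\ge0}2^{-vL'}\sum_{m\in\mathbb{Z}^n}(1+|2^{-v}m|)^{-N}2^{vn\max(\tau^+,1)}2^{vn}p_N(\phi),
\end{equation*}
where $L'$ can be made as large as we wish by choosing $L$ large in the estimate for $\langle\psi_{v,m},\phi\rangle$ (this is where the vanishing moments of $\psi$ are essential — without them there is no decaying factor $2^{-vL}$ to kill the $2^{vn}$ from the lattice sum and the $2^{vn\tau^+}$ blow-up), and the inner sum over $m$ converges for $N>n$ with value $\approx 2^{vn}$. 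Choosing $L$ so that $L'=L-C>0$ after absorbing all the $v$-powers yields a finite geometric sum, proving absolute convergence of the series in $\mathcal{S}'$ and the bound $|\langle T_\psi\lambda,\phi\rangle|\le c\,p_N(\phi)\|\lambda\|_{\mathfrak{f}_{p(\cdot),q(\cdot)}^{\alpha(\cdot),\tau(\cdot)}}$, i.e. continuity of $T_\psi:\mathfrak{f}_{p(\cdot),q(\cdot)}^{\alpha(\cdot),\tau(\cdot)}\to\mathcal{S}'(\mathbb{R}^n)$. The main obstacle is purely bookkeeping in the variable-exponent estimate of $\|\chi_{v,m}\|_{p(\cdot)}$ away from the origin: one must check that the polynomial-in-$|m|$ corrections coming from the log-Hölder decay of $1/p$, $\alpha$, $\tau$ are no worse than $(1+|2^{-v}m|)^{\varepsilon v}$-type factors, which are then comfortably beaten by the $(1+|2^{-v}m|)^{-N}$ decay of $\langle\psi_{v,m},\phi\rangle$ for $N$ large. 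This is the same computation carried out in \cite{D6}, and I would simply refer to it for the details after setting up the reduction.
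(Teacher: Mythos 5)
Your argument is correct and is essentially the standard Frazier--Jawerth-type proof that the paper itself omits, deferring to \cite{D6}: bound $|\lambda_{v,m}|$ via the preceding coefficient lemma together with $\|\chi_{v,m}\|_{p(\cdot)}\approx 2^{-vn/p(x)}$, pair this with the rapid decay of $\langle\psi_{v,m},\phi\rangle$ coming from the vanishing moments of $\psi$ and the Schwartz seminorms of $\phi$, and sum. The only (harmless) imprecision is the description of $\|\chi_{0,m}\|_{p(\cdot)}$, which is in fact uniformly comparable to $1$ for unit cubes under the log-H\"older decay condition; since you only need a uniform lower bound there and your final summation absorbs polynomial factors in $|m|$ anyway, nothing breaks.
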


For a sequence $\lambda =\{\lambda _{v,m}\in \mathbb{C}\}_{v\in \mathbb{N}%
_{0},m\in \mathbb{Z}^{n}}$, $0<r\leq \infty $ and a fixed $d>0$, set%
\begin{equation*}
\lambda _{v,m,r,d}^{\ast }:=\Big(\sum_{h\in \mathbb{Z}^{n}}\frac{|\lambda
_{v,h}|^{r}}{(1+2^{v}|2^{-v}h-2^{-v}m|)^{d}}\Big)^{1/r}
\end{equation*}%
and $\lambda _{r,d}^{\ast }:=\{\lambda _{v,m,r,d}^{\ast }\in \mathbb{C}%
\}_{v\in \mathbb{N}_{0},m\in \mathbb{Z}^{n}}$.

\begin{lemma}
\label{lamda-equi}Let $\alpha ,\mathbb{\tau }\in C_{\mathrm{loc}}^{\log }$, $%
\mathbb{\tau }^{-}>0$, $p,q\in \mathcal{P}_{0}^{\log }$, $%
0<q^{+},p^{+}<\infty $, $w>n+c_{\log }(1/q)+c_{\log }\left( \tau \right) $
and $\frac{\tau ^{+}}{\tau ^{-}}<\frac{\min \left( p^{-},q^{-}\right) }{r}$.
Let $a=r\max (c_{\log }(\alpha ),\alpha ^{+}-\alpha ^{-})$. Then%
\begin{equation*}
\left\Vert \lambda _{r,d}^{\ast }\right\Vert _{\mathfrak{f}_{p(\cdot
),q(\cdot )}^{\alpha (\cdot ),\tau (\cdot )}}\approx \left\Vert \lambda
\right\Vert _{\mathfrak{f}_{p(\cdot ),q(\cdot )}^{\alpha (\cdot ),\tau
(\cdot )}}
\end{equation*}%
where 
\begin{equation*}
d>nr\tau ^{+}+n\tau ^{-}+n+a+w.
\end{equation*}
\end{lemma}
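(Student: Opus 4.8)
The plan is to prove the two inequalities $\left\Vert \lambda \right\Vert _{\mathfrak{f}_{p(\cdot ),q(\cdot )}^{\alpha (\cdot ),\tau (\cdot )}}\lesssim \left\Vert \lambda _{r,d}^{\ast }\right\Vert _{\mathfrak{f}_{p(\cdot ),q(\cdot )}^{\alpha (\cdot ),\tau (\cdot )}}$ and $\left\Vert \lambda _{r,d}^{\ast }\right\Vert _{\mathfrak{f}_{p(\cdot ),q(\cdot )}^{\alpha (\cdot ),\tau (\cdot )}}\lesssim \left\Vert \lambda \right\Vert _{\mathfrak{f}_{p(\cdot ),q(\cdot )}^{\alpha (\cdot ),\tau (\cdot )}}$. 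The first is immediate: since the term $h=m$ appears in the sum defining $\lambda _{v,m,r,d}^{\ast }$ with weight $1$, we have $|\lambda _{v,m}|\leq \lambda _{v,m,r,d}^{\ast }$ pointwise, hence the monotonicity of the quasi-norm gives the bound directly. So the content is entirely in the reverse inequality, which is where the main obstacle lies.

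For the reverse inequality, I would first reduce to a pointwise estimate. Fix $v\in\mathbb N_0$ and $x\in Q_{v,m}$. The key is to estimate $\sum_{m}2^{v(\alpha(x)+n/2)}\lambda_{v,m,r,d}^{\ast}\chi_{v,m}(x)$ in terms of a convolution of $\eta$-type functions applied to $\sum_{h}2^{v(\alpha(\cdot)+n/2)}|\lambda_{v,h}|\chi_{v,h}$. The strategy mirrors \cite{D6}: using $0<r\le$ (something $\le 1$ after possibly shrinking $r$; note the hypotheses only force $r<p^-$ via the condition $\tfrac{\tau^+}{\tau^-}<\tfrac{\min(p^-,q^-)}{r}$, but one may assume $r\le 1$ WLOG since decreasing $r$ only enlarges $\lambda^\ast_{r,d}$ pointwise up to constants) one writes, for $x\in Q_{v,m}$,
\begin{equation*}
\big(\lambda_{v,m,r,d}^{\ast}\big)^{r}=\sum_{h}\frac{|\lambda_{v,h}|^{r}}{(1+2^{v}|2^{-v}h-2^{-v}m|)^{d}}\lesssim \sum_{h}|\lambda_{v,h}|^{r}\,\eta_{v,d}(x-y_{v,h})\cdot 2^{-vn}
\end{equation*}
for suitable representative points, and then recognizes the right side as $c\,\eta_{v,d}\ast\big(\sum_{h}|\lambda_{v,h}|^{r}\chi_{v,h}/|Q_{v,h}|\big)(x)$ via Lemma \ref{Conv-est1}. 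Next I would pull the variable smoothness weight $2^{v\alpha(x)r}$ inside the convolution using Lemma \ref{DHR-lemma}, at the cost of replacing $d$ by $d-a$ with $a=r\max(c_{\log}(\alpha),\alpha^+-\alpha^-)$ — this is exactly why $a$ appears in the lower bound on $d$; the term $\alpha^+-\alpha^-$ handles comparing $2^{v\alpha(x)}$ with $2^{v\alpha(y)}$ when $v=0$ is small but more carefully it is the cross-term from moving the weight in and out. After this we obtain
\begin{equation*}
\Big(\sum_{m}2^{v(\alpha(x)+n/2)}\lambda_{v,m,r,d}^{\ast}\chi_{v,m}(x)\Big)^{r}\lesssim \eta_{v,(d-a)/?}\ast\Big(\sum_{h}2^{v(\alpha(\cdot)+n/2)r}|\lambda_{v,h}|^{r}\chi_{v,h}/|Q_{v,h}|\Big)(x),
\end{equation*}
i.e. $g_{v}^{\ast}(x)\lesssim \big(\eta_{v,M}\ast g_{v}^{r}(x)\big)^{1/r}$ where $g_v=\sum_m 2^{v(\alpha(\cdot)+n/2)}|\lambda_{v,m}|\chi_{v,m}$ and $g_v^\ast=\sum_m 2^{v(\alpha(\cdot)+n/2)}\lambda^\ast_{v,m,r,d}\chi_{v,m}$, with $M$ still large (this is where the remaining room $d>nr\tau^++n\tau^-+n+a+w$ is consumed, leaving $M/r>n\tau^++$ suitable margin).

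The final step is to apply Lemma \ref{DHR-lemma1} in its natural form. Writing $\|\lambda^\ast_{r,d}\|_{\mathfrak f}=\|(g_v^\ast)_v\|_{L^{\tau}_{p}(\ell^q)}$ and similarly for $\lambda$, and setting $P=p/r$, $Q=q/r$, $T=r\tau$, $A=r\alpha$, the pointwise bound gives $(g_v^\ast)^r\lesssim \eta_{v,M}\ast(g_v^r)$, so that
\begin{equation*}
\big\|(g_v^\ast)_v\big\|_{L^{\tau}_{p}(\ell^q)}^{r}=\big\|\big((g_v^\ast)^{r}\big)_v\big\|_{L^{\tau/?}_{p/r}(\ell^{q/r})}\lesssim \big\|\big(\eta_{v,M}\ast g_v^{r}\big)_v\big\|_{L^{T}_{P}(\ell^{Q})}\lesssim \big\|(g_v^{r})_v\big\|_{L^{T}_{P}(\ell^{Q})}=\big\|(g_v)_v\big\|_{L^{\tau}_{p}(\ell^q)}^{r},
\end{equation*}
where the middle inequality is Lemma \ref{DHR-lemma1} applied with the rescaled exponents. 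For that application to be legitimate we need $T\in C_{\mathrm{loc}}^{\log}$ (clear, $T=r\tau$), $T^->0$ (clear), $P,Q\in\mathcal P^{\log}$ with $0<\tfrac{T^+}{T^-}<\min(P^-,Q^-)$ — but $\tfrac{T^+}{T^-}=\tfrac{\tau^+}{\tau^-}$ and $\min(P^-,Q^-)=\tfrac{1}{r}\min(p^-,q^-)$, so this is precisely the hypothesis $\tfrac{\tau^+}{\tau^-}<\tfrac{\min(p^-,q^-)}{r}$ — and finally $M$ large enough, which is guaranteed by $d>nr\tau^++n\tau^-+n+a+w$ with $w>n+c_{\log}(1/q)+c_{\log}(\tau)$. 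The main obstacle, and the step demanding the most care, is bookkeeping the loss of decay exponent through the two operations (the $\eta$-convolution reduction with Lemma \ref{Conv-est1} and the weight-transfer with Lemma \ref{DHR-lemma}) so that the residual $M$ still satisfies the margin required by Lemma \ref{DHR-lemma1}; matching the arithmetic of $d$ is exactly what the stated inequality $d>nr\tau^++n\tau^-+n+a+w$ encodes. The whole argument parallels \cite[proof of the corresponding lemma]{D6}, so after setting up the pointwise estimate one may invoke those computations.
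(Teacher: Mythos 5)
Your proposal is correct in substance but takes a genuinely different, and more economical, route than the paper. You prove the easy inequality exactly as the paper does (the term $h=m$ gives $|\lambda_{v,m}|\leq\lambda_{v,m,r,d}^{\ast}$), and for the converse you first establish the pointwise bound $\big(g_{v}^{\ast}(x)\big)^{r}\lesssim \eta_{v,d-a}\ast\big(g_{v}^{r}\big)(x)$ with $g_{v}=\sum_{m}2^{v(\alpha(\cdot)+n/2)}|\lambda_{v,m}|\chi_{v,m}$ (legitimate: the disjointness of the $Q_{v,h}$ gives $g_{v}^{r}=\sum_{h}2^{vr(\alpha(\cdot)+n/2)}|\lambda_{v,h}|^{r}\chi_{v,h}$, Lemma \ref{Conv-est1} converts the discrete kernel into $\eta_{v,d}\ast$, and Lemma \ref{DHR-lemma} applied to $r\alpha$ costs only $rc_{\log}(\alpha)\leq a$ of decay), and then you invoke Lemma \ref{DHR-lemma1} with the rescaled data $p/r$, $q/r$, $r\tau$, correctly observing that the hypothesis $\tau^{+}/\tau^{-}<\min(p^{-},q^{-})/r$ is precisely what makes that application admissible and that $\|(H_{v}^{r})_{v}\|_{L^{r\tau}_{p/r}(\ell^{q/r})}=\|(H_{v})_{v}\|_{L^{\tau}_{p}(\ell^{q})}^{r}$ closes the loop. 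The paper does not make this reduction: it re-runs the entire machinery of the proof of Lemma \ref{DHR-lemma1} from scratch (annular decomposition over $\Omega_{k}$, the containments $Q(x,2^{k-v+3})\subset Q(c_{P},2^{k-v_{P}+4})$, H\"older's inequality, Theorem \ref{DHHR-estimate}, the maximal operator, Theorem \ref{DHR-theorem}, and a separate case analysis for $|P|>1$ and $|P|\leq1$). Your organization buys a much shorter argument and makes transparent where each hypothesis enters; the paper's inline version buys nothing except independence from Lemma \ref{DHR-lemma1} (which it has already proved anyway). Two minor caveats: (i) your ``WLOG $r\leq1$'' remark is both unnecessary for your argument and not quite harmless as stated, since replacing $r$ by $r'<r$ rescales $d$ to $dr'/r$ in the comparison $\lambda^{\ast}_{r,d}\leq\lambda^{\ast}_{r',dr'/r}$, so the hypothesis on $d$ would have to be re-checked; better to drop it, as nothing in your chain requires $r\leq1$. (ii) You leave the final accounting of the decay exponent schematic (the ``$?$'' placeholders); this should be written out, though note that the paper's own proof is itself inconsistent here, ending with the condition $d>n(r+1)\tau^{+}+n+a+w$ rather than the stated $d>nr\tau^{+}+n\tau^{-}+n+a+w$, so the precise numerology is loose on both sides and your reduction in fact pins it down cleanly as $d-a>n(r\tau)^{+}+2n+w$.
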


The proof of this lemma is postponed to the Appendix. By this result and by
the same arguments given in \cite[Theorem 3.14]{D6} we obtain the following
statement.

\begin{theorem}
\label{phi-tran}Let $\alpha ,\mathbb{\tau }\in C_{\mathrm{loc}}^{\log }$, $%
\mathbb{\tau }^{-}>0$, $p,q\in \mathcal{P}_{0}^{\log }$ and $%
0<q^{+},p^{+}<\infty $. \textit{Suppose that }$\Phi $, $\Psi \in \mathcal{S}(%
\mathbb{R}^{n})$ satisfying $\mathrm{\eqref{Ass1}}$ and $\varphi ,\psi \in 
\mathcal{S}(\mathbb{R}^{n})$ satisfy $\mathrm{\eqref{Ass2}}$ such that $%
\mathrm{\eqref{Ass4}}$ holds. The operators $S_{\varphi }:\mathfrak{F}%
_{p\left( \cdot \right) ,q\left( \cdot \right) }^{\alpha \left( \cdot
\right) ,\tau (\cdot )}\rightarrow \mathfrak{f}_{p(\cdot ),q(\cdot
)}^{\alpha (\cdot ),\tau (\cdot )}$ and $T_{\psi }:\mathfrak{f}_{p\left(
\cdot \right) ,q\left( \cdot \right) }^{\alpha \left( \cdot \right) ,\tau
(\cdot )}\rightarrow \mathfrak{F}_{p\left( \cdot \right) ,q\left( \cdot
\right) }^{\alpha \left( \cdot \right) ,\tau (\cdot )}$ are bounded.
Furthermore, $T_{\psi }\circ S_{\varphi }$ is the identity on $\mathfrak{F}%
_{p\left( \cdot \right) ,q\left( \cdot \right) }^{\alpha \left( \cdot
\right) ,\tau (\cdot )}$.
\end{theorem}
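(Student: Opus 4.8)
The plan is to prove Theorem \ref{phi-tran} by reducing it to the boundedness of the $\varphi$-transform $S_\varphi$ and its inverse $T_\psi$ on the corresponding sequence space, together with the reproducing identity \eqref{Ass4}. First I would establish that $S_\varphi : \mathfrak{F}_{p(\cdot),q(\cdot)}^{\alpha(\cdot),\tau(\cdot)} \to \mathfrak{f}_{p(\cdot),q(\cdot)}^{\alpha(\cdot),\tau(\cdot)}$ is bounded. The coefficients are $(S_\varphi f)_{v,m} = \langle f, \varphi_{v,m}\rangle = |Q_{v,m}|^{1/2}(\widetilde{\varphi}_v \ast f)(x_{Q_{v,m}})$. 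Using Lemma \ref{r-trick} applied to $\widetilde{\varphi}_v \ast f = \widetilde{\varphi}_v \ast \widetilde{\varphi}_v \ast \cdots$ (after inserting an intermediate convolution with a function whose Fourier transform is $1$ on the support of $\mathcal{F}\widetilde{\varphi}$), one bounds $|(S_\varphi f)_{v,m}|$ pointwise on $Q_{v,m}$ by $c\,2^{vn/2}(\eta_{v,Nr} \ast |\widetilde{\varphi}_v\ast f|^r(x))^{1/r}$ for any $x\in Q_{v,m}$ and $r$ small. Hence the building block $\sum_m 2^{v(\alpha(\cdot)+n/2)}|(S_\varphi f)_{v,m}|\chi_{v,m}$ is dominated, modulo the $2^{v\alpha(\cdot)}$ factor, by $c\,(\eta_{v,Nr}\ast(2^{v\alpha(\cdot)}|\widetilde{\varphi}_v\ast f|)^r)^{1/r}$ via Lemma \ref{DHR-lemma} (choosing $N$ large to absorb $R=c_{\log}(\alpha)$). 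Applying the vector-valued convolution estimate Lemma \ref{DHR-lemma1} to the powers (using the hypotheses $\tau^-\!>\!0$, $\frac{\tau^+}{\tau^-}<\min(p^-,q^-)$ so one can take $r$ with $\frac{\tau^+}{\tau^-}<\min(p^-,q^-)/r$ and $m$ large) gives $\|S_\varphi f\|_{\mathfrak{f}} \le c\|(\widetilde{\varphi}_v\ast f)_v\|_{L^{\tau(\cdot)}_{p(\cdot)}(\ell^{q(\cdot)})}\approx \|f\|_{\mathfrak{F}}$, after noting $\widetilde\varphi$ and $\Phi$ satisfy the same support/non-degeneracy conditions as $\varphi$.

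Next I would prove $T_\psi : \mathfrak{f}_{p(\cdot),q(\cdot)}^{\alpha(\cdot),\tau(\cdot)} \to \mathfrak{F}_{p(\cdot),q(\cdot)}^{\alpha(\cdot),\tau(\cdot)}$ is bounded; convergence in $\mathcal S'$ and continuity into $\mathcal S'$ are already supplied by the penultimate lemma. Write $g = T_\psi\lambda = \sum_{k,h}\lambda_{k,h}\psi_{k,h}$ and compute $\varphi_v\ast g$. Because $\mathcal{F}\varphi$ is supported in the annulus $\{1/2\le|\xi|\le 2\}$ and $\mathcal{F}\psi_k$ in $\{|\xi|\le 2^{k+1}\}$ (resp.\ the annulus for $k\ge1$), only the terms with $|k-v|\le 1$ survive, so $\varphi_v\ast g = \sum_{|k-v|\le 1}\sum_h \lambda_{k,h}\,\varphi_v\ast\psi_{k,h}$. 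The classical kernel estimate $|\varphi_v\ast\psi_{k,h}(x)|\le c\,2^{kn/2}2^{-|k-v|L}(1+2^{\min(k,v)}|x-2^{-k}h|)^{-M}$ for arbitrarily large $L,M$ then yields, for $x\in Q_{v,m}$,
\begin{equation*}
2^{v\alpha(x)}|\varphi_v\ast g(x)| \le c\sum_{|k-v|\le1} 2^{-|k-v|L}\,2^{v\alpha(x)}\sum_h \frac{2^{kn/2}|\lambda_{k,h}|}{(1+2^{k}|x-2^{-k}h|)^{M}} \lesssim \sum_{|k-v|\le1} 2^{-|k-v|(L-c_{\log}(\alpha))}\, 2^{k(\alpha(\cdot)+n/2)}\lambda^{\ast}_{k,m,r,d}\big|_{\text{on }Q_{v,m}}
\end{equation*}
up to the usual replacement of $2^{v\alpha(x)}$ by $2^{k\alpha(\cdot)}$ via Lemma \ref{DHR-lemma} and absorbing a finite sum over $h$ into $\lambda^{\ast}_{k,m,r,d}$ for suitable $r\le1$, $d$ large. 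Taking the $\ell^{q(\cdot)}$ norm in $v$, a shift/finite-convolution argument (or Lemma \ref{Key-lemma} with $\delta$ chosen $>\tau^+$, which is legitimate since $L$ can be taken as large as we like) reduces the $\mathfrak{F}$-norm of $g$ to $c\|\lambda^{\ast}_{r,d}\|_{\mathfrak{f}_{p(\cdot),q(\cdot)}^{\alpha(\cdot),\tau(\cdot)}}$, which by Lemma \ref{lamda-equi} is $\approx\|\lambda\|_{\mathfrak{f}}$. One has to check the index constraints of Lemma \ref{lamda-equi} — $w>n+c_{\log}(1/q)+c_{\log}(\tau)$, $\frac{\tau^+}{\tau^-}<\frac{\min(p^-,q^-)}{r}$, $d>nr\tau^++n\tau^-+n+a+w$ — are compatible with $\tau^->0$ and $r$ small, which they are.

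Finally, $T_\psi\circ S_\varphi = \mathrm{id}$ on $\mathfrak{F}_{p(\cdot),q(\cdot)}^{\alpha(\cdot),\tau(\cdot)}$ follows from the pointwise reproducing identity recorded after \eqref{Ass4}, namely $f = \sum_{m}\langle f,\widetilde\Phi_m\rangle\Psi_m + \sum_{v\ge1}\sum_m\langle f,\varphi_{v,m}\rangle\psi_{v,m}$ in $\mathcal S'$; one only needs to know all series in sight converge in $\mathcal S'$ for $f\in\mathfrak{F}$, which is guaranteed by the continuity of $T_\psi$ on $\mathfrak{f}$ together with the already-proved boundedness of $S_\varphi$. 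I expect the main obstacle to be the bookkeeping in the $T_\psi$ half: carefully choosing $r\in(0,1]$ and the decay exponents $d$, $L$, $M$ so that simultaneously (i) Lemma \ref{lamda-equi} applies, (ii) the geometric factor in $|k-v|$ outlives the loss $c_{\log}(\alpha)$ from moving the smoothness weight, and (iii) the finite-band convolution can be summed against $\ell^{q(\cdot)}$ using Lemma \ref{Key-lemma} — none of these is deep individually, but the variable-exponent setting forces each inequality to be justified through a modular computation rather than a direct norm manipulation, exactly as in the cited \cite[Theorem 3.14]{D6}, which is why the theorem is stated as a consequence of that argument combined with Lemma \ref{lamda-equi}.
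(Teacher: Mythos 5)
Your proposal is correct and follows essentially the same route as the paper, which itself only invokes Lemma \ref{lamda-equi} together with the $\varphi$-transform argument of \cite[Theorem 3.14]{D6}: the $r$-trick plus Lemma \ref{DHR-lemma} and Lemma \ref{DHR-lemma1} for $S_{\varphi }$, the almost-diagonal kernel estimate plus Lemma \ref{lamda-equi} for $T_{\psi }$, and the Calder\'on reproducing identity \eqref{Ass4} for $T_{\psi }\circ S_{\varphi }=\mathrm{id}$. The only blemishes are cosmetic: the factor in your pointwise bound for $|(S_{\varphi }f)_{v,m}|$ should be $2^{-vn/2}=|Q_{v,m}|^{1/2}$ rather than $2^{vn/2}$ (your subsequent cancellation against $2^{v(\alpha (\cdot )+n/2)}$ shows you intended this), and the boundary term $k=v_{P}^{+}-1$ in the finite band $|k-v|\leq 1$ needs the shift-invariance of the quasi-norm (as in Lemma \ref{new-equinorm3}) or a passage to the dyadic parent of $P$.
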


From \ this theorem, we obtain the next important property of spaces $%
\mathfrak{F}_{p\left( \cdot \right) ,q\left( \cdot \right) }^{\alpha \left(
\cdot \right) ,\tau (\cdot )}$.

\begin{corollary}
Let $\alpha ,\mathbb{\tau }\in C_{\mathrm{loc}}^{\log }$, $\tau ^{-}>0$, $%
p,q\in \mathcal{P}_{0}^{\log }$ and $0<p^{+},q^{+}<\infty $, The definition
of the spaces $\mathfrak{F}_{p\left( \cdot \right) ,q\left( \cdot \right)
}^{\alpha \left( \cdot \right) ,\tau (\cdot )}$ is independent of the
choices of $\Phi $ and $\varphi $.
\end{corollary}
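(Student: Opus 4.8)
The plan is to derive this corollary directly from the $\varphi$-transform characterization in Theorem \ref{phi-tran}. The key observation is that Theorem \ref{phi-tran} already establishes that, for any admissible pair $(\Phi,\varphi)$ together with a compatible dual pair $(\Psi,\psi)$ satisfying \eqref{Ass4}, the synthesis operator $T_\psi$ is bounded from $\mathfrak{f}_{p(\cdot),q(\cdot)}^{\alpha(\cdot),\tau(\cdot)}$ into $\mathfrak{F}_{p(\cdot),q(\cdot)}^{\alpha(\cdot),\tau(\cdot)}$, the analysis operator $S_\varphi$ is bounded in the reverse direction, and $T_\psi\circ S_\varphi=\mathrm{id}$. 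The sequence space $\mathfrak{f}_{p(\cdot),q(\cdot)}^{\alpha(\cdot),\tau(\cdot)}$ depends only on $\alpha,\tau,p,q$ and not on any choice of generating functions, so it serves as a fixed reference object through which any two Fourier-analytic definitions can be compared.

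First I would fix two admissible pairs $(\Phi^{(0)},\varphi^{(0)})$ and $(\Phi^{(1)},\varphi^{(1)})$, each satisfying \eqref{Ass1} and \eqref{Ass2}, and denote by $\|\cdot\|_0$ and $\|\cdot\|_1$ the corresponding quasi-norms from Definition \ref{B-F-def}. For each $i\in\{0,1\}$, invoke the construction on pp.~130--131 of \cite{FJ90} to produce a dual pair $(\Psi^{(i)},\psi^{(i)})$ for which the resolution of identity \eqref{Ass4} holds; this gives reproducing formulas $f=T_{\psi^{(i)}}S_{\varphi^{(i)}}f$ for all $f\in\mathcal S'(\mathbb R^n)$ in the relevant class. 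Then for $f\in\mathfrak{F}_{p(\cdot),q(\cdot)}^{\alpha(\cdot),\tau(\cdot)}$ defined via the pair $i=0$, I would write $f=T_{\psi^{(1)}}\big(S_{\varphi^{(1)}}f\big)$ and estimate
\begin{equation*}
\|f\|_1=\big\|T_{\psi^{(1)}}S_{\varphi^{(1)}}f\big\|_1\lesssim \big\|S_{\varphi^{(1)}}f\big\|_{\mathfrak{f}_{p(\cdot),q(\cdot)}^{\alpha(\cdot),\tau(\cdot)}}\lesssim \|f\|_0,
\end{equation*}
where the first inequality is the boundedness of $T_{\psi^{(1)}}$ and the second is the boundedness of $S_{\varphi^{(1)}}$, both from Theorem \ref{phi-tran}. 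By symmetry, interchanging the roles of the indices $0$ and $1$ yields $\|f\|_0\lesssim\|f\|_1$, so the two quasi-norms are equivalent and the underlying set of tempered distributions is the same. One should note that Theorem \ref{phi-tran} is stated for a fixed pair but its proof applies verbatim to any pair satisfying the hypotheses, and the implicit constants depend only on $\alpha,\tau,p,q,n$ and the chosen Schwartz functions.

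I do not expect any serious obstacle here, since all the analytic work has been absorbed into Theorem \ref{phi-tran} and Lemma \ref{lamda-equi}; the corollary is essentially a bookkeeping argument of the standard Frazier--Jawerth type. The only point requiring a little care is to make sure the reproducing identity \eqref{Ass4} holds for the specific pairs under consideration and that $S_\varphi f$ indeed lands in $\mathfrak{f}_{p(\cdot),q(\cdot)}^{\alpha(\cdot),\tau(\cdot)}$ when $f$ lies in the space defined by the \emph{other} pair --- but this is exactly the content of the boundedness of $S_\varphi$ in Theorem \ref{phi-tran}, which does not reference the pair used to define membership of $f$, only the values $\langle f,\varphi_{v,m}\rangle$. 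Hence the argument closes cleanly.
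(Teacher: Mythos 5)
Your overall strategy --- routing the comparison of two admissible pairs through the pair-independent sequence space $\mathfrak{f}_{p(\cdot),q(\cdot)}^{\alpha(\cdot),\tau(\cdot)}$ via Theorem \ref{phi-tran} --- is the intended Frazier--Jawerth argument, and the paper indeed treats the corollary as an immediate consequence of that theorem. However, your chain of inequalities has the cross-pair burden in the wrong place, and the step where you discharge it is not justified by what Theorem \ref{phi-tran} actually says. You write $\|f\|_1=\|T_{\psi^{(1)}}S_{\varphi^{(1)}}f\|_1\lesssim\|S_{\varphi^{(1)}}f\|_{\mathfrak{f}}\lesssim\|f\|_0$ and claim the last inequality ``is exactly the content of the boundedness of $S_{\varphi}$.'' It is not: the theorem gives $\|S_{\varphi^{(1)}}f\|_{\mathfrak{f}}\lesssim\|f\|_{\mathfrak{F}}$ with the $\mathfrak{F}$-quasi-norm computed from the \emph{same} pair $(\Phi^{(1)},\varphi^{(1)})$ (its proof runs through the Peetre maximal characterization of Theorem \ref{fun-char}, which is tied to that pair), i.e.\ it gives $\|S_{\varphi^{(1)}}f\|_{\mathfrak{f}}\lesssim\|f\|_1$. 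Substituting that into your chain yields $\|f\|_1\lesssim\|f\|_1$, which is circular. Your parenthetical justification --- that the boundedness of $S_\varphi$ ``does not reference the pair used to define membership of $f$'' --- conflates the fact that the \emph{operator} $S_{\varphi^{(1)}}$ is defined independently of the pair with the (false as stated) claim that the \emph{estimate} holds against the quasi-norm of an arbitrary pair; the latter is essentially the independence assertion you are trying to prove.

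The repair is standard: use the reproducing formula for the \emph{other} pair. Write $f=T_{\psi^{(0)}}S_{\varphi^{(0)}}f$ and estimate
\begin{equation*}
\|f\|_1=\bigl\|T_{\psi^{(0)}}S_{\varphi^{(0)}}f\bigr\|_1\lesssim\bigl\|S_{\varphi^{(0)}}f\bigr\|_{\mathfrak{f}_{p(\cdot),q(\cdot)}^{\alpha(\cdot),\tau(\cdot)}}\lesssim\|f\|_0.
\end{equation*}
Now the second inequality is literally the boundedness of $S_{\varphi^{(0)}}$ measured against its own quasi-norm, and the cross-pair step is the first inequality: the boundedness of $T_{\psi^{(0)}}$ into the space quasi-normed with $(\Phi^{(1)},\varphi^{(1)})$. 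This is the step that genuinely goes through for mismatched pairs, because the proof of the $T_\psi$-boundedness rests on the kernel estimates of Lemma \ref{FJ-lemma} for $\varphi^{(1)}_j\ast\psi^{(0)}_{v,m}$, which use only the individual Fourier-support, smoothness and decay properties of each factor and never require the two systems to form a dual pair. You should say this explicitly rather than asserting a cross-pair bound for $S_\varphi$. With that reordering, the rest of your argument (symmetry in the indices, equivalence of the quasi-norms, equality of the underlying sets) closes correctly.
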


\section{Embeddings}

For the spaces $\mathfrak{F}_{p(\cdot ),q(\cdot )}^{\alpha (\cdot ),\tau
(\cdot )}$ introduced above we want to show some embedding theorems. We say
a quasi-Banach space $A_{1}$ is continuously embedded in another
quasi-Banach space $A_{2}$, $A_{1}\hookrightarrow A_{2}$, if $A_{1}\subset
A_{2}$ and there is a $c>0$ such that $\left\Vert f\right\Vert _{A_{2}}\leq
c\left\Vert f\right\Vert _{A_{1}}$ for all $f\in A_{1}$. We begin with the
following elementary embeddings.

\begin{theorem}
\label{Elem-emb}Let $\alpha ,\mathbb{\tau }\in C_{\mathrm{loc}}^{\log }$, $%
\tau ^{-}>0$ and $p,q,q_{0},q_{1}\in \mathcal{P}_{0}^{\log }$ with $%
p^{+},q^{+},q_{0}^{+},q_{1}^{+}<\infty $.\newline
(i) If $q_{0}\leq q_{1}$, then 
\begin{equation*}
\mathfrak{F}_{p(\cdot ),q_{0}(\cdot )}^{\alpha (\cdot ),\tau (\cdot
)}\hookrightarrow \mathfrak{F}_{p(\cdot ),q_{1}(\cdot )}^{\alpha (\cdot
),\tau (\cdot )}.
\end{equation*}%
(ii) If $(\alpha _{0}-\alpha _{1})^{-}>0$, then 
\begin{equation*}
\mathfrak{F}_{{p(\cdot )},q_{0}{(\cdot )}}^{\alpha _{0}(\cdot ){,\tau (\cdot
)}}\hookrightarrow \mathfrak{F}_{{p(\cdot )},q_{1}{(\cdot )}}^{\alpha
_{1}(\cdot ){,\tau (\cdot )}}.
\end{equation*}
\end{theorem}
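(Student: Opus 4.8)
The plan is to reduce both embeddings to pointwise/sequence inequalities inside the norm $\left\Vert \cdot \right\Vert_{L^{p(\cdot)}(\ell^{q(\cdot)})}$, applied uniformly over all dyadic cubes $P$, so that taking $\sup_{P\in\mathcal{Q}}$ at the end is automatic. For part (i), the key observation is the elementary embedding $\ell^{q_0(x)}\hookrightarrow\ell^{q_1(x)}$ at each point $x$ with norm $1$ when $q_0(x)\le q_1(x)$; that is, for any sequence $(a_v)_v$ one has $\Vert(a_v)_v\Vert_{\ell^{q_1(x)}}\le\Vert(a_v)_v\Vert_{\ell^{q_0(x)}}$. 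Applying this with $a_v = \dfrac{2^{v\alpha(x)}\varphi_v\ast f(x)}{|P|^{\tau(x)}}\chi_P(x)$ for $v\ge v_P^+$, monotonicity of the outer $L^{p(\cdot)}$-norm gives
\begin{equation*}
\Big\|\Big(\frac{2^{v\alpha(\cdot)}\varphi_v\ast f}{|P|^{\tau(\cdot)}}\chi_P\Big)_{v\ge v_P^+}\Big\|_{L^{p(\cdot)}(\ell^{q_1(\cdot)})}\le\Big\|\Big(\frac{2^{v\alpha(\cdot)}\varphi_v\ast f}{|P|^{\tau(\cdot)}}\chi_P\Big)_{v\ge v_P^+}\Big\|_{L^{p(\cdot)}(\ell^{q_0(\cdot)})},
\end{equation*}
and taking the supremum over $P\in\mathcal{Q}$ yields $\left\Vert f\right\Vert_{\mathfrak{F}_{p(\cdot),q_1(\cdot)}^{\alpha(\cdot),\tau(\cdot)}}\le\left\Vert f\right\Vert_{\mathfrak{F}_{p(\cdot),q_0(\cdot)}^{\alpha(\cdot),\tau(\cdot)}}$. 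This part is essentially routine.

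For part (ii), I would first use part (i) to reduce to the worst case on the target side: it suffices to prove $\mathfrak{F}_{p(\cdot),q_0(\cdot)}^{\alpha_0(\cdot),\tau(\cdot)}\hookrightarrow\mathfrak{F}_{p(\cdot),\infty}^{\alpha_1(\cdot),\tau(\cdot)}$, since $q_1^+<\infty$ and we can further embed $\mathfrak{F}_{p(\cdot),\infty}^{\alpha_1(\cdot),\tau(\cdot)}$ into... — actually the cleaner route is to go the other way: bound the $\ell^{q_1(x)}$-norm of the $\alpha_1$-weighted sequence by the $\ell^{\infty}$-norm of the $\alpha_0$-weighted sequence times a convergent geometric factor. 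Concretely, fix $x$ and write, for $v\ge v_P^+$,
\begin{equation*}
2^{v\alpha_1(x)}|\varphi_v\ast f(x)| = 2^{-v(\alpha_0(x)-\alpha_1(x))}\,2^{v\alpha_0(x)}|\varphi_v\ast f(x)|\le 2^{-v(\alpha_0-\alpha_1)^-}\,\sup_{v\ge v_P^+}2^{v\alpha_0(x)}|\varphi_v\ast f(x)|.
\end{equation*}
Summing the $q_1(x)$-th powers of $2^{-v(\alpha_0-\alpha_1)^-}$ over $v\ge v_P^+\ge 0$ gives a constant $c$ depending only on $(\alpha_0-\alpha_1)^->0$ and $q_1^-$ (here one uses $q_1^+<\infty$ to control the exponent uniformly, or simply notes $\sum_v 2^{-vq_1(x)(\alpha_0-\alpha_1)^-}\le\sum_v 2^{-vq_1^-(\alpha_0-\alpha_1)^-}/\!\!\ \cdots$, a finite geometric series). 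Hence pointwise
\begin{equation*}
\Big\|\big(2^{v\alpha_1(x)}\varphi_v\ast f(x)\,\chi_P(x)\big)_{v\ge v_P^+}\Big\|_{\ell^{q_1(x)}}\le c\,\Big\|\big(2^{v\alpha_0(x)}\varphi_v\ast f(x)\,\chi_P(x)\big)_{v\ge v_P^+}\Big\|_{\ell^{\infty}}\le c\,\Big\|\big(2^{v\alpha_0(x)}\varphi_v\ast f(x)\,\chi_P(x)\big)_{v\ge v_P^+}\Big\|_{\ell^{q_0(x)}}.
\end{equation*}
Dividing through by $|P|^{\tau(x)}$, applying monotonicity of $\left\Vert\cdot\right\Vert_{L^{p(\cdot)}}$, and taking $\sup_{P\in\mathcal{Q}}$ finishes the argument.

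The only mildly delicate point is making the geometric-sum constant genuinely uniform in $x$: since $q_1$ is variable, $\sum_{v\ge 0}2^{-vq_1(x)(\alpha_0-\alpha_1)^-}$ must be bounded independently of $x$, which follows at once from $q_1^-\ge q_1^- >0$... i.e. from the ratio $2^{-q_1(x)(\alpha_0-\alpha_1)^-}\le 2^{-q_1^-(\alpha_0-\alpha_1)^-}<1$, so the series is dominated by $(1-2^{-q_1^-(\alpha_0-\alpha_1)^-})^{-1}$. I do not expect any real obstacle here; the whole theorem is a soft consequence of the pointwise nesting of $\ell^{q(x)}$-spaces together with the strict smoothness gap in (ii), and neither the $C_{\mathrm{loc}}^{\log}$ regularity of $\alpha,\tau$ nor the maximal-function machinery (Theorem \ref{DHR-theorem}, Lemma \ref{DHR-lemma1}) is needed for this particular statement — those hypotheses are inherited from the standing assumptions on the spaces but play no active role in the proof.
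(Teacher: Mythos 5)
Your proposal is correct and is exactly the elementary argument the paper has in mind: the paper offers no details beyond the remark that the result "can be obtained by using the properties of $L^{p(\cdot )}(\ell ^{q(\cdot )})$ spaces," and your pointwise nesting $\ell ^{q_{0}(x)}\hookrightarrow \ell ^{q_{1}(x)}$ for (i), together with the uniform geometric-series bound $\sum_{v\geq 0}2^{-v(\alpha _{0}-\alpha _{1})^{-}q_{1}(x)}\leq (1-2^{-(\alpha _{0}-\alpha _{1})^{-}q_{1}^{-}})^{-1}$ and the passage through $\ell ^{\infty }$ for (ii), is precisely that argument, carried out correctly since $v_{P}^{+}\geq 0$ and the outer $L^{p(\cdot )}$-quasi-norm is monotone. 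You are also right that the log-H\"{o}lder hypotheses and the maximal-function lemmas play no role here.
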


The proof can be obtained by using the properties of $L^{p(\cdot )}(\ell
^{q(\cdot )})$ spaces. We next consider embeddings of Sobolev-type. It is
well-known that%
\begin{equation*}
F_{{p}_{0},q}^{{\alpha }_{0}{,\tau }}\hookrightarrow F_{{p}_{1},q}^{{\alpha }%
_{1}{,\tau }}.
\end{equation*}%
if ${\alpha }_{0}-n/{p}_{0}={\alpha }_{1}-n/{p}_{1}$, where $0<{p}_{0}<{p}%
_{1}<\infty ,0\leq {\tau }<\infty $ and $0<q\leq \infty $ (see e.g. \cite[%
Corollary 2.2]{WYY}). In the following theorem we generalize these
embeddings to variable exponent case.

\begin{theorem}
Let $\alpha _{0},\alpha _{1},\mathbb{\tau }\in C_{\mathrm{loc}}^{\log }$, $%
\tau ^{-}>0$ and $p_{0},p_{1},q\in \mathcal{P}_{0}^{\log }$ with $%
0<p_{0}^{+},p_{1}^{+},q^{+}<\infty $. If ${\alpha }_{0}>{\alpha }_{1}$\ and $%
{\alpha }_{0}{(x)-}\frac{n}{p_{0}(x)}={\alpha }_{1}{(x)-}\frac{n}{p_{1}(x)}$%
\ with $0<\Big(\frac{{p}_{0}}{{p}_{1}}\Big)^{+}<1$, then 
\begin{equation*}
\mathfrak{F}_{{p}_{0}{(\cdot )},q{(\cdot )}}^{{\alpha }_{0}{(\cdot ),\tau
(\cdot )}}\hookrightarrow \mathfrak{F}_{{p}_{1}{(\cdot )},\infty }^{{\alpha }%
_{1}{(\cdot ),\tau (\cdot )}}.
\end{equation*}
\end{theorem}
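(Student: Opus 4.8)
The plan is to reduce the Sobolev-type embedding to a pointwise estimate on the Peetre-type maximal function combined with the key substitution lemma for variable smoothness (Lemma \ref{DHR-lemma}) and the ``$r$-trick'' (Lemma \ref{r-trick}), in the spirit of the proof of the analogous embedding for $F_{p(\cdot),q(\cdot)}^{\alpha(\cdot)}$ by Vyb\'{\i}ral and of the classical argument in \cite[Corollary 2.2]{WYY}. First I would fix $f\in\mathfrak{F}_{p_0(\cdot),q(\cdot)}^{\alpha_0(\cdot),\tau(\cdot)}$ with norm $1$ and a dyadic cube $P$, and I would work with the equivalent quasi-norm of Lemma \ref{new-equinorm} (supremum over $|P|\le 1$) so that on $P$ only the frequencies $v\ge v_P$ enter. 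The target is to bound, uniformly in $P$ and $v\ge v_P^+$,
\begin{equation*}
\Big\|\frac{2^{v\alpha_1(\cdot)}\varphi_v\ast f}{|P|^{\tau(\cdot)}}\chi_P\Big\|_{p_1(\cdot)}\le c.
\end{equation*}

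The heart of the argument is the pointwise inequality: for $r$ with $0<r<p_0^-$ and $m$ large,
\begin{equation*}
2^{v\alpha_1(x)}|\varphi_v\ast f(x)|\le c\,2^{v\alpha_1(x)}\big(\eta_{v,m}\ast|\varphi_v\ast f|^{r}(x)\big)^{1/r},
\end{equation*}
which follows from Lemma \ref{r-trick}; then using $\alpha_1=\alpha_0-n/p_0+n/p_1$ and Lemma \ref{DHR-lemma} (to move the smoothness weight inside the convolution, at the cost of enlarging $m$) I would write the right-hand side as roughly $2^{vn/p_1(x)-vn/p_0(x)}\big(\eta_{v,m}\ast (2^{v\alpha_0(\cdot)}|\varphi_v\ast f|)^r\big)^{1/r}$ with the $2^{vn/p_0}$ essentially converting the $L^{p_0}$ information into an $L^{p_1}$ bound. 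Concretely, since $p_0(x)\le p_1(x)$ pointwise (from $(p_0/p_1)^+<1$), I would insert a H\"{o}lder splitting $1/r=1/(\text{something})$ or, more cleanly, use the embedding-by-convolution estimate: by Young-type control $\big(\eta_{v,m}\ast g^r\big)^{1/r}$ with $g=2^{v\alpha_0(\cdot)}\varphi_v\ast f$, the factor $2^{vn(1/p_1(x)-1/p_0(x))}$ together with the local $L^{p_0/r}$ mass of $g^r$ over balls of radius $2^{-v}$ gives the $L^{p_1}$ estimate; this is exactly the mechanism by which $2^{-vn/p_0}\|g\chi_{B(x,2^{-v})}\|_{p_0}$-type quantities control $2^{-vn/p_1}$-weighted ones when $p_0<p_1$. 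I would then estimate the modular $\varrho_{p_1(\cdot)}$ of $\tfrac{2^{v\alpha_1(\cdot)}\varphi_v\ast f}{|P|^{\tau(\cdot)}}\chi_P$ by decomposing $\mathbb{R}^n$ into the dyadic annuli $\{2^{i-v}\le|y-x|<2^{i-v+1}\}$ around each $x\in P$, exactly as in the proof of the Theorem identifying $\mathfrak{F}_{p(\cdot),q(\cdot)}^{\alpha(\cdot),\tau(\cdot)}$ with $B_{\infty,\infty}$ above (equation \eqref{est} and the lines following it), picking up a geometric factor $2^{-ih p_1^-}$ from the kernel decay that absorbs the growth $2^{i(n\tau^+p_1^+)}$ coming from replacing $|P|^{\tau}$ by $|B(x,2^{i-v+1})|^{\tau}$, provided $m$ (hence $h$) is chosen large enough; the ball-averages are then controlled by $\|f\|_{\mathfrak{F}_{p_0(\cdot),q(\cdot)}^{\alpha_0(\cdot),\tau(\cdot)}}=1$ after summing the inner sum over $j\ge(v-i-1)^+$ trivially (the $q$-sum only helps since $q^+<\infty$).

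For the passage from the $p_0$-modular to the $p_1$-modular I would invoke Theorem \ref{DHHR-estimate} to replace, up to a controlled error of the form $\min(|Q|^m,1)\big((e+|x|)^{-m}+\text{average}\big)$, the quantity $\big(\tfrac{1}{|Q|}\int_Q h\big)^{p_1(x)}$ by $\tfrac{1}{|Q|}\int_Q h^{p_1(y)}\,dy$ on balls $Q=B(x,2^{-v})$ (or the annuli), the error terms being summable in $v$ and integrable in $x$ by the log-H\"{o}lder decay of the exponents; this is the standard device for handling variable integrability in these estimates and is the reason the hypothesis $p_i\in\mathcal{P}_0^{\log}$ is needed. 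Finally I would conclude by the scaling (homogeneity) argument: the bound for $\|f\|=1$ extends to all $f$ by replacing $f$ with $f/\|f\|_{\mathfrak{F}_{p_0(\cdot),q(\cdot)}^{\alpha_0(\cdot),\tau(\cdot)}}$.

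I expect the main obstacle to be the bookkeeping of the exponents in the ``dimensional shift'' $2^{vn(1/p_1-1/p_0)}$: one must verify that after applying Lemma \ref{DHR-lemma} to both the smoothness weight $2^{v\alpha_0}$ and implicitly to the integrability (via Theorem \ref{DHHR-estimate}), the net power of $2^v$ that survives is non-positive on $P$ for $v\ge v_P$, so that the annular sum over $i$ (and the implicit sum over $v$ hidden in the $\mathfrak{F}$-norm on the right) actually converges; the role of the condition $(p_0/p_1)^+<1$ (strict, and uniform) is precisely to give a uniform negative exponent there, and the log-H\"{o}lder continuity of $\alpha_0,\alpha_1,\tau,1/p_0,1/p_1$ is what lets these pointwise-in-$x$ exponents be frozen at the center of each ball. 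A secondary technical point is choosing $m$ (equivalently $r<p_0^-$ and the decay $h$) large enough simultaneously for Lemma \ref{r-trick}, for Lemma \ref{DHR-lemma} applied to $\alpha_0+n\tau$, and for the annular summability $h>\tau^+p_1^+/p_1^-$; since all of these only require $m$ large, this can be arranged, but the inequalities should be displayed explicitly.
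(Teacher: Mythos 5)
There is a genuine gap, and it sits at the very first step of your plan. You set as your target the bound
\begin{equation*}
\Big\|\frac{2^{v\alpha_1(\cdot)}\varphi_v\ast f}{|P|^{\tau(\cdot)}}\chi_P\Big\|_{p_1(\cdot)}\le c
\quad\text{uniformly in }P\text{ and }v\ge v_P^+,
\end{equation*}
but the target space is $\mathfrak{F}_{p_1(\cdot),\infty}^{\alpha_1(\cdot),\tau(\cdot)}$, whose quasi-norm requires $\big\|\sup_{v\ge v_P^+}2^{v\alpha_1(\cdot)}|\varphi_v\ast f|\,|P|^{-\tau(\cdot)}\chi_P\big\|_{p_1(\cdot)}\le c$: the supremum over $v$ sits \emph{inside} the $L^{p_1(\cdot)}$ norm, and a uniform-in-$v$ bound on each individual term does not deliver it. The paper's proof is organized precisely around the device that resolves this: the Brezis--Mironescu interpolation inequality (Lemma \ref{BM-lemma}), applied pointwise in $x$ with $\sigma(x)=p_0(x)/p_1(x)$ and $s_0=\alpha_1-n/p_1=\alpha_0-n/p_0$, so that $\sigma\alpha_0+(1-\sigma)s_0=\alpha_1$ and
\begin{equation*}
\Big\|\Big(\tfrac{2^{v\alpha_1(x)}\varphi_v\ast f(x)}{|P|^{\tau(x)}}\Big)_v\Big\|_{\ell^{q(x)}}
\le\Big\|\Big(\tfrac{2^{v\alpha_0(x)}\varphi_v\ast f(x)}{|P|^{\tau(x)}}\Big)_v\Big\|_{\ell^{\infty}}^{\sigma(x)}
\Big\|\Big(\tfrac{2^{vs_0(x)}\varphi_v\ast f(x)}{|P|^{\tau(x)}}\Big)_v\Big\|_{\ell^{\infty}}^{1-\sigma(x)}.
\end{equation*}
The second factor is shown to be bounded by a constant pointwise (this is where your technical ingredients --- the $r$-trick, the decomposition into shifted cubes $Q_v+kl(Q_v)$, H\"older with an auxiliary exponent, and Theorem \ref{DHHR-estimate} --- actually live), and then integrating the first factor uses $\sigma(x)p_1(x)=p_0(x)$ to turn the $p_1$-modular directly into the $p_0$-modular. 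Your proposal contains no substitute for this step.

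The second problem is the mechanism you invoke to trade $p_0$ for $p_1$. The heuristic ``$2^{-vn/p_0}\|g\chi_{B(x,2^{-v})}\|_{p_0}$-type quantities control $2^{-vn/p_1}$-weighted ones when $p_0<p_1$'' is, via H\"older on a ball $B$ of measure $2^{-vn}$, backwards: H\"older gives $|B|^{-1/p_0}\|g\chi_B\|_{p_0}\le|B|^{-1/p_1}\|g\chi_B\|_{p_1}$, i.e.\ the local $L^{p_0}$-average is \emph{dominated by}, not dominating, the local $L^{p_1}$-average. The correct frequency-local mechanism would be a Nikolskii/Plancherel--Polya inequality for band-limited pieces, but even that only yields the fixed-$v$ bound and leaves you with the sup-in-$v$ problem above; the interpolation lemma handles both at once. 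A smaller point: you cannot invoke Lemma \ref{new-equinorm} to restrict to $|P|\le1$, since that lemma requires $(\tau-\frac1p)^-\ge0$, which is not assumed in this theorem; the paper instead treats $|P|>1$ and $|P|\le1$ as separate cases, using the log-H\"older continuity of $\tau$ to move $|P|^{-\tau(\cdot)}$ inside the convolution only in the second case.
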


\begin{proof}
We use some\ idea of\ \cite{JS08}. By homogeneity, it suffices to consider
the case $\left\Vert f\right\Vert _{\mathfrak{F}_{{p}_{0}{(\cdot )},\infty
}^{{\alpha }_{0}{(\cdot ),\tau (\cdot )}}}=1$. We will provet that%
\begin{equation*}
\Big\|\Big(\frac{2^{v\alpha _{1}\left( \cdot \right) }\varphi _{v}\ast f}{%
|P|^{\tau (\cdot )}}\chi _{P}\Big)_{v\geq v_{P}^{+}}\Big\|_{L^{p_{1}(\cdot
)}(\ell ^{q(\cdot )})}\lesssim 1
\end{equation*}%
for any dyadic cube $P$. We will study two cases in particular.

\textit{Case 1.} $|P|>1$. Let $Q_{v}\subset P$ be a cube, with $\ell \left(
Q_{v}\right) =2^{-v}$ and $x\in Q_{v}\subset P$. By Lemma \ref{r-trick} we
have for any $m>n$, $0<r<\frac{\tau ^{-}p_{0}^{-}}{\tau ^{+}}$ 
\begin{equation*}
|\varphi _{v}\ast f(x)|\leq c\left( \eta _{v,m}\ast |\varphi _{v}\ast
f|^{r}(x)\right) ^{1/r},
\end{equation*}%
where $c>0$ independent of $v$. We have%
\begin{eqnarray*}
&&\eta _{v,m}\ast |\varphi _{v}\ast f|^{r}(x) \\
&=&2^{vn}\int_{\mathbb{R}^{n}}\frac{\left\vert \varphi _{v}\ast
f(z)\right\vert ^{r}}{\left( 1+2^{v}\left\vert x-z\right\vert \right) ^{m}}dz
\\
&=&\int_{3Q_{v}}\cdot \cdot \cdot dz+\sum_{k=(k_{1},...,k_{n})\in \mathbb{Z}%
^{n},\max_{i=1,...,n}|k_{i}|\geq 2}\int_{Q_{v}+kl(Q_{v})}\cdot \cdot \cdot
dz.
\end{eqnarray*}%
Let $z\in Q_{v}+kl(Q_{v})$ with $k\in \mathbb{Z}^{n}$ and $|k|>4\sqrt{n}$.
Then $\left\vert x-z\right\vert \geq \left\vert k\right\vert 2^{-v-1}$ and
the second integral is boundd by 
\begin{equation*}
\left\vert k\right\vert ^{-m}2^{vn}\int_{Q_{v}+kl(Q_{v})}|\varphi _{v}\ast
f(z)|^{r}dz=\left\vert k\right\vert ^{-m}M_{Q_{v}+kl(Q_{v})}|\varphi
_{v}\ast f|^{r}(x).
\end{equation*}%
We put $\sigma =\frac{{p}_{0}}{{p}_{1}}\in ]0,1[$ and $s_{0}=\alpha
_{1}-n/p_{1}\in C_{\text{loc}}^{\log }$. Since $\sigma \alpha _{0}+\left(
1-\sigma \right) s_{0}=\alpha _{1}$, Lemma \ref{BM-lemma} gives for any $%
x\in \mathbb{R}^{n}$%
\begin{eqnarray*}
&&\Big\|\Big(\frac{2^{\alpha _{1}(x)v}\varphi _{v}\ast f(x)}{\left\vert
P\right\vert ^{\tau (x)}}\chi _{P}(x)\Big)_{v\geq 0}\Big\|_{\ell ^{q(x)}} \\
&\leq &\Big\|\Big(\frac{2^{\alpha _{0}(x)v}\varphi _{v}\ast f(x)}{\left\vert
P\right\vert ^{\tau (x)}}\chi _{P}(x)\Big)_{v\geq 0}\Big\|_{\ell ^{_{\infty
}}}^{\sigma (x)}\Big\|\Big(\frac{2^{s_{0}(x)v}\varphi _{v}\ast f(x)}{%
\left\vert P\right\vert ^{\tau (x)}}\chi _{P}(x)\Big)_{v\geq 0}\Big\|_{\ell
^{_{\infty }}}^{1-\sigma (x)}.
\end{eqnarray*}%
This expression in $L^{p_{1}(\cdot )}$-norm is bounded. Indeed, 
\begin{eqnarray*}
&&\Big\|\Big(\frac{2^{s_{0}(x)vr}\left\vert \varphi _{v}\ast f(x)\right\vert
^{r}}{\left\vert P\right\vert ^{\tau (x)r}}\chi _{P}(x)\Big)_{v\geq 0}\Big\|%
_{\ell ^{_{\infty }}} \\
&\lesssim &\sum_{k\in \mathbb{Z}^{n},|k|\leq 4\sqrt{n}}\Big\|\Big(\frac{%
2^{s_{0}(x)vr}g_{v,k,m,N}(x)}{\left\vert P\right\vert ^{\tau (x)r}}\chi
_{P}(x)\Big)_{v\geq 0}\Big\|_{\ell ^{_{\infty }}}+ \\
&&\sum_{k\in \mathbb{Z}^{n},|k|>4\sqrt{n}}\left\vert k\right\vert
^{n-m+n\tau ^{+}r+\sigma ^{+}}\Big\|\Big(\frac{%
2^{s_{0}(x)vr}|k|^{-n}g_{v,k,m,N}(x)}{\left\vert P\right\vert ^{\tau (x)r}}%
\chi _{P}(x)\Big)_{v\geq 0}\Big\|_{\ell ^{_{\infty }}},
\end{eqnarray*}%
where $\varrho (\cdot )=n-\frac{ndr}{p_{0}(\cdot )\tau \left( \cdot \right) }
$ and 
\begin{equation*}
g_{v,k,m,N}=\left\{ 
\begin{array}{ccc}
M_{3Q_{v}}|\varphi _{v}\ast f|^{r} & \text{if} & |k|=0 \\ 
M_{Q_{v}+kl(Q_{v})}|\varphi _{v}\ast f|^{r} & \text{if} & 0<|k|\leq 4\sqrt{n}
\\ 
|k|^{-n\tau r}M_{Q_{v}+kl(Q_{v})}|k|^{-\varrho (\cdot )}|\varphi _{v}\ast
f|^{r} & \text{if} & |k|>4\sqrt{n}.%
\end{array}%
\right.
\end{equation*}%
We will prove that%
\begin{equation}
\Big\|\Big(\frac{2^{s_{0}(x)vr}|k|^{-n}g_{v,k,m,N}(x)}{\left\vert
P\right\vert ^{\tau (x)r}}\chi _{P}(x)\Big)_{v\geq 0}\Big\|_{\ell ^{\infty
}}\lesssim 1  \label{key-estT-L}
\end{equation}%
for any $k\in \mathbb{Z}^{n}$ with $|k|>4\sqrt{n}$. We take $d>0$ such that $%
\tau ^{+}<d<\frac{\tau ^{-}p_{0}^{-}}{r}$. Observe that $Q_{v}+kl(Q_{v})%
\subset Q\left( x,|k|2^{1-v}\right) $, by Theorem \ref{DHHR-estimate}, 
\begin{eqnarray*}
&&\frac{|k|^{-ndr}}{\left\vert P\right\vert ^{dr}}\left( M_{Q\left(
x,|k|2^{1-v}\right) }|k|^{-\varrho (\cdot )}2^{vs_{0}r}|\varphi _{v}\ast
f|^{r}(x)\right) ^{d/\tau \left( x\right) } \\
&\leq &M_{Q\left( x,|k|2^{1-v}\right) }\Big(\frac{2^{v\frac{s_{0}dr}{\tau }%
}|\varphi _{v}\ast f|^{rd/\tau }}{|k|^{ndr}\left\vert P\right\vert ^{dr}}%
\Big)(x)+C.
\end{eqnarray*}%
By H\"{o}lder's inequality this expression is bounded by%
\begin{equation*}
\Big\|\frac{2^{v\left( s_{0}+\frac{n}{p_{0}}\right) dr/\tau }}{%
|k|^{ndr}\left\vert P\right\vert ^{dr}}|\varphi _{v}\ast f|^{rd/\tau }\chi
_{Q\left( \cdot ,|k|2^{1-v}\right) }\Big\|_{p_{0}(\cdot )\tau \left( \cdot
\right) /dr}+C.
\end{equation*}%
Observe that $Q\left( x,|k|2^{1-v}\right) \subset Q\left(
c_{P},|k|2^{1-v_{P}}\right) $ for any $x\in Q_{v}$. The last norm is bounded
if and only if%
\begin{equation*}
\int_{Q\left( c_{P},|k|2^{1-v_{P}}\right) }\frac{2^{v\left( s_{0}(y)+\frac{n%
}{p_{0}(y)}\right) p_{0}(y)}|\varphi _{v}\ast f\left( y\right) |^{p_{0}(y)}}{%
\left( |k|^{n}\left\vert P\right\vert \right) ^{p_{0}(y)\tau \left( y\right)
}}dy\lesssim 1,
\end{equation*}%
which is equivalent to 
\begin{equation*}
\Big\|\frac{2^{v\left( s_{0}(\cdot )+\frac{n}{p_{0}(\cdot )}\right)
p_{0}(\cdot )}\left\vert \varphi _{v}\ast f\right\vert \chi _{Q\left(
c_{P},|k|2^{1-v_{P}}\right) }}{|Q\left( c_{P},|k|2^{1-v_{P}}\right) |^{\tau
(\cdot )}}\Big\|_{p_{0}(\cdot )}\lesssim 1,
\end{equation*}%
due to the fact that $s_{0}(\cdot )+\frac{n}{p_{0}(\cdot )}=\alpha
_{0}(\cdot )$ and $\left\vert Q\left( c_{P},|k|2^{1-v_{P}}\right)
\right\vert \geq 1$. Therefore, the sum $\sum_{k\in \mathbb{Z}^{n},|k|>4%
\sqrt{n}}\cdot \cdot \cdot $ is bounded by taking $m$ large enough. Similar
arguments with necessary modifications can be used to prove $\mathrm{%
\eqref{key-estT-L}}$ for any $|k|\leq 4\sqrt{n}$. Therefore,%
\begin{eqnarray*}
&&\int_{P}\Big(\sum\limits_{v=0}^{\infty }2^{v{\alpha }_{1}(x)q(x)}\frac{%
\left\vert \varphi _{v}\ast f(x)\right\vert ^{q(x)}}{\left\vert P\right\vert
^{\tau (x)q(x)}}\Big)^{p_{1}(x)/q(x)}dx \\
&\leq &c\int_{P}\Big\|\Big(\frac{2^{\alpha _{0}(x)v}\varphi _{v}\ast f(x)}{%
\left\vert P\right\vert ^{\tau (x)}}\Big)_{v\geq 0}\Big\|_{\ell ^{_{\infty
}}}^{\sigma (x)p_{1}(x)}dx \\
&=&c\int_{P}\Big\|\Big(\frac{2^{\alpha _{0}(x)v}\varphi _{v}\ast f(x)}{%
\left\vert P\right\vert ^{\tau (x)}}\Big)_{v\geq 0}\Big\|_{\ell ^{_{\infty
}}}^{p_{0}(x)}dx\leq c.
\end{eqnarray*}%
\textit{Case 2.} $|P|\leq 1$. {Since }$\tau ${\ is log-H\"{o}lder
continuous, we have }%
\begin{equation*}
\left\vert P\right\vert ^{-\tau (x)}\leq c\text{ }\left\vert P\right\vert
^{-\tau (y)}(1+2^{v_{P}}\left\vert x-y\right\vert )^{c_{\log }\left( \tau
\right) }\leq c\text{ }\left\vert P\right\vert ^{-\tau
(y)}(1+2^{v}\left\vert x-y\right\vert )^{c_{\log }\left( \tau \right) }
\end{equation*}%
for any $x,y\in \mathbb{R}^{n}$ and any $v\geq v_{P}$. Therefore,%
\begin{equation*}
\frac{1}{\left\vert P\right\vert ^{\tau (\cdot )}}\eta _{v,m}\ast \left(
|\varphi _{v}\ast f|\chi _{Q}\right) \lesssim \eta _{v,m-c_{\log }\left(
\tau \right) }\ast \frac{|\varphi _{v}\ast f|\chi _{Q}}{\left\vert
P\right\vert ^{\tau (\cdot )}}
\end{equation*}%
for any dyadic cube where $Q$. The arguments here are quite similar to those
used in the case $|P|>1$, where we did not need to use Theorem \ref%
{DHHR-estimate}, which could be used only to move $\left\vert P\right\vert
^{\tau (\cdot )}$ inside the convolution and hence the proof is complete.
\end{proof}

Let $\alpha ,\mathbb{\tau }\in C_{\mathrm{loc}}^{\log },\mathbb{\tau }%
^{-}>0,p,q\in \mathcal{P}_{0}^{\log }$ with $0<p^{+},q^{+}<\infty $. From $%
\mathrm{\eqref{emd}}$, we obtain%
\begin{equation*}
\mathfrak{F}_{p(\cdot ),q(\cdot )}^{\alpha (\cdot ),\tau (\cdot
)}\hookrightarrow F_{p(\cdot ),\infty }^{\alpha (\cdot )+n\tau (\cdot )-%
\frac{n}{p(\cdot )}}\hookrightarrow \mathcal{S}^{\prime }(\mathbb{R}^{n}).
\end{equation*}%
Similar arguments of \cite[Proposition 2.3]{WYY} can be used to prove that%
\begin{equation*}
\mathcal{S}(\mathbb{R}^{n})\hookrightarrow \mathfrak{F}_{p(\cdot ),q(\cdot
)}^{\alpha (\cdot ),\tau (\cdot )}.
\end{equation*}%
Therefore, we obtain the following result.

\begin{theorem}
Let $\alpha ,\mathbb{\tau }\in C_{\mathrm{loc}}^{\log },\mathbb{\tau }^{-}>0$
and $p,q\in \mathcal{P}_{0}^{\log }$ with $0<p^{+},q^{+}<\infty $. Then 
\begin{equation*}
\mathcal{S}(\mathbb{R}^{n})\hookrightarrow \mathfrak{F}_{p(\cdot ),q(\cdot
)}^{\alpha (\cdot ),\tau (\cdot )}\hookrightarrow \mathcal{S}^{\prime }(%
\mathbb{R}^{n}).
\end{equation*}
\end{theorem}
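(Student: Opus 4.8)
The plan is to establish the two embeddings $\mathcal{S}(\mathbb{R}^{n})\hookrightarrow \mathfrak{F}_{p(\cdot ),q(\cdot )}^{\alpha (\cdot ),\tau (\cdot )}$ and $\mathfrak{F}_{p(\cdot ),q(\cdot )}^{\alpha (\cdot ),\tau (\cdot )}\hookrightarrow \mathcal{S}^{\prime }(\mathbb{R}^{n})$ separately, exactly as indicated by the two display lines preceding the statement. The second embedding is essentially already in hand: the remark after Theorem 2 gives the pointwise bound $2^{v(\alpha (x)+n(\tau (x)-1/p(x)))}|\varphi _{v}\ast f(x)|\leq c\left\Vert f\right\Vert _{\mathfrak{F}_{p(\cdot ),q(\cdot )}^{\alpha (\cdot ),\tau (\cdot )}}$, which says $f\in F_{p(\cdot ),\infty }^{\alpha (\cdot )+n\tau (\cdot )-n/p(\cdot )}$ with control of the norm; composing with the known embedding of variable Triebel--Lizorkin spaces into $\mathcal{S}'$ (a standard fact for $F_{p(\cdot ),q(\cdot )}^{\alpha (\cdot )}$, established in \cite{DHR}) closes it. So the bulk of the work is the first embedding.

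For $\mathcal{S}(\mathbb{R}^{n})\hookrightarrow \mathfrak{F}_{p(\cdot ),q(\cdot )}^{\alpha (\cdot ),\tau (\cdot )}$ I would follow the template of \cite[Proposition 2.3]{WYY}. First I would record the elementary decay estimate for $\varphi _{v}\ast f$ when $f\in \mathcal{S}$: using moment conditions on $\varphi$ (from \eqref{Ass2}, $\mathcal{F}\varphi$ vanishes near the origin, so $\varphi$ has all moments zero) together with a Taylor expansion of $f$, one gets, for every $L,M\in \mathbb{N}_0$, a bound of the form
\begin{equation*}
|\varphi _{v}\ast f(x)|\leq c_{L,M}\,2^{-vL}(1+|x|)^{-M},\qquad v\in \mathbb{N}_{0},
\end{equation*}
with $c_{L,M}$ controlled by finitely many Schwartz seminorms of $f$ (for $v=0$, i.e.\ the $\Phi$ term, only the decay in $x$ is needed). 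Then I would insert this into the definition of the quasi-norm: choosing $L$ larger than $\alpha ^{+}$ absorbs the factor $2^{v\alpha (x)}$ and makes the $\ell ^{q(x)}$-sum over $v\geq v_{P}^{+}$ converge to something bounded by $c\,2^{-v_{P}^{+}(L-\alpha ^{+})}(1+|x|)^{-M}$. The remaining task is to bound
\begin{equation*}
\sup_{P\in \mathcal{Q}}\Big\|\frac{2^{-v_{P}^{+}(L-\alpha ^{+})}(1+|\cdot|)^{-M}}{|P|^{\tau (\cdot )}}\chi _{P}\Big\|_{p(\cdot )}\leq c.
\end{equation*}
For $|P|\leq 1$ the factor $|P|^{-\tau (\cdot)}\leq 1$ (since $\tau ^{-}\geq 0$... here $\tau^->0$ so actually $|P|^{-\tau(\cdot)}$ could be small, in any case $\le c$ on $|P|\le 1$), $v_{P}^{+}=0$, and $\|(1+|\cdot|)^{-M}\chi _{P}\|_{p(\cdot )}\leq \|(1+|\cdot|)^{-M}\|_{p(\cdot )}<\infty$ for $M$ large (this uses $1/p\in C^{\log}$, equivalently the decay condition, to guarantee $(1+|\cdot|)^{-M}\in L^{p(\cdot)}$). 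For $|P|>1$ one has $|P|^{-\tau (x)}\leq |P|^{-\tau ^{-}}=2^{nv_{P}^{+}\tau ^{-}}$... wait, $v_P^+=0$ here too since $|P|>1$ means $v_P<0$; rather $|P|^{-\tau(x)}\le 1$, and since $\mathrm{dist}$-type considerations... more carefully: using local log-H\"older continuity of $\tau$ to compare $|P|^{\tau(x)}$ with $|P|^{\tau^-}$ up to a polynomial factor in the position, one reduces to estimating $\||P|^{-\tau ^{-}}(1+|\cdot|)^{-M'}\chi _{P}\|_{p(\cdot )}$, and choosing $M$ large enough (depending on $\tau^+$ and $n$) so that the polynomial growth $(1+|\cdot|)^{n\tau^+}$ over $P$ is beaten by the decay, this is bounded uniformly in $P$.

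The main obstacle is the last, bookkeeping-heavy step for large cubes: correctly tracking how the normalization $|P|^{-\tau (\cdot )}$ interacts with the position-dependent decay $(1+|x|)^{-M}$ when $P$ is a huge dyadic cube far from the origin, and choosing $L$ and $M$ (hence which Schwartz seminorms of $f$ enter the final constant) uniformly in $P$. This requires the local log-H\"older continuity of $\tau$ to move $|P|^{-\tau(x)}$ around inside $P$ at the cost of a controlled polynomial factor, and the decay part of $1/p\in C^{\log}$ (equivalently the boundedness estimate $\|(1+|\cdot|)^{-M}\|_{p(\cdot)}<\infty$) to conclude. Once those are set up the computation is routine; I would then remark that this, combined with the second embedding already noted, finishes the proof, and that the constants depend only on finitely many seminorms of $f$, so the inclusion $\mathcal{S}\hookrightarrow \mathfrak{F}_{p(\cdot ),q(\cdot )}^{\alpha (\cdot ),\tau (\cdot )}$ is continuous.
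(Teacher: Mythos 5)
Your overall strategy coincides with the paper's: the paper disposes of $\mathfrak{F}_{p(\cdot ),q(\cdot )}^{\alpha (\cdot ),\tau (\cdot )}\hookrightarrow \mathcal{S}^{\prime }$ by invoking the pointwise bound \eqref{emd} exactly as you do, and for $\mathcal{S}\hookrightarrow \mathfrak{F}_{p(\cdot ),q(\cdot )}^{\alpha (\cdot ),\tau (\cdot )}$ it simply refers to the argument of \cite[Proposition 2.3]{WYY}, which is the template you are filling in. However, your execution of the first embedding contains a concrete error: you have the two cases backwards. With the paper's conventions ($v_{P}=-\log _{2}l(P)$, $|P|=2^{-nv_{P}}$), a cube with $|P|\leq 1$ has $v_{P}\geq 0$, hence $v_{P}^{+}=v_{P}$ (not $0$) and $|P|^{-\tau (x)}=2^{nv_{P}\tau (x)}\geq 1$, which blows up as the cube shrinks; your claim that $|P|^{-\tau (\cdot )}\leq c$ there is false, and as written that case is not proved. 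The correct mechanism for small cubes is precisely the factor $2^{-v_{P}^{+}(L-\alpha ^{+})}$ you already carry: since the sum starts at $v=v_{P}$, choosing $L>\alpha ^{+}+n\tau ^{+}$ (not merely $L>\alpha ^{+}$) gives $2^{-v_{P}(L-\alpha ^{+})}\,|P|^{-\tau (x)}\leq 2^{-v_{P}(L-\alpha ^{+}-n\tau ^{+})}\leq 1$, after which $\|(1+|\cdot |)^{-M}\chi _{P}\|_{p(\cdot )}\leq \|(1+|\cdot |)^{-M}\|_{p(\cdot )}<\infty$ finishes it. Conversely, the case $|P|>1$, which you treat as the delicate one, is the trivial one: there $v_{P}^{+}=0$ and $|P|^{-\tau (x)}\leq 1$ directly because $\tau ^{-}>0$, so no log-H\"{o}lder comparison of $\tau$ and no polynomial correction factor is needed at all. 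With these two cases repaired (and noting that $\|(1+|\cdot |)^{-M}\|_{p(\cdot )}<\infty$ already follows from $Mp^{-}>n$, without appealing to the decay condition on $1/p$), your argument closes and agrees with what the paper intends.

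For the second embedding you mirror the paper's chain through $F_{p(\cdot ),\infty }^{\alpha (\cdot )+n\tau (\cdot )-n/p(\cdot )}$; strictly speaking \eqref{emd} is a uniform (i.e.\ $B_{\infty ,\infty }$-type) bound rather than an $L^{p(\cdot )}$ one, but since a polynomial-in-$2^{v}$ bound on $\varphi _{v}\ast f$ suffices to reconstruct $f=\sum_{v}\psi _{v}\ast \widetilde{\varphi }_{v}\ast f$ continuously in $\mathcal{S}^{\prime }$, this imprecision is shared with the paper and is harmless.
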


Now we establish some further embedding of the spaces $\mathfrak{F}_{p(\cdot
),q(\cdot )}^{\alpha (\cdot ),\tau (\cdot )}$.

\begin{theorem}
\textit{Let }$\alpha ,\tau \in C_{\mathrm{loc}}^{\log },\mathbb{\tau }^{-}>0$
\textit{and }$p,q\in \mathcal{P}_{0}^{\log }$ with $0<p^{+},q^{+}<\infty $%
\textit{.} If $(p_{2}-p_{1})^{+}\leq 0$, then%
\begin{equation*}
F_{{p}_{2}{(\cdot )},q{(\cdot )}}^{{\alpha (\cdot )+n\tau (\cdot )+\frac{n}{{%
p}_{2}{(\cdot )}}-\frac{n}{{p}_{1}{(\cdot )}}}}\hookrightarrow \mathfrak{F}_{%
{p}_{1}{(\cdot )},q{(\cdot )}}^{{\alpha (\cdot ),\tau (\cdot )}}.
\end{equation*}
\end{theorem}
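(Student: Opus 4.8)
The plan is to reduce the claimed embedding to a pointwise comparison between the $F$-norm and the $\mathfrak{F}$-norm on a fixed dyadic cube $P$, then exploit the relation $(p_2-p_1)^+\le0$, i.e. $p_2(x)\le p_1(x)$ for all $x$, together with the shift in smoothness $\alpha+n\tau+\tfrac{n}{p_2}-\tfrac{n}{p_1}$. By homogeneity I would assume $\|f\|_{F_{p_2(\cdot),q(\cdot)}^{\alpha(\cdot)+n\tau(\cdot)+n/p_2(\cdot)-n/p_1(\cdot)}}=1$ and show that for every dyadic cube $P$,
\begin{equation*}
\int_{P}\Big(\sum_{v=v_P^+}^{\infty}2^{v\alpha(x)q(x)}\frac{|\varphi_v\ast f(x)|^{q(x)}}{|P|^{\tau(x)q(x)}}\Big)^{p_1(x)/q(x)}dx\lesssim1.
\end{equation*}

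First I would handle the bulk case $|P|\le1$. Here $v_P^+=v_P\ge0$, and for $x\in P$ and $v\ge v_P$ one has $|P|^{-1}\le 2^{v_Pn}\le 2^{vn}$, so writing $\beta(x):=\alpha(x)+n\tau(x)+\tfrac{n}{p_2(x)}-\tfrac{n}{p_1(x)}$ and using $|P|^{-\tau(x)}\le |P|^{-1/p_1(x)+1/p_1(x)-\tau(x)}$ together with $(\tau-\tfrac{1}{p_1})$ being handled by the size of $|P|\le 1$, the extra factor $|P|^{\tau(x)}$ produces a gain that, combined with $p_2(x)\le p_1(x)$, lets me bound the $\ell^{q(x)}$–$L^{p_1(\cdot)}$ expression by the corresponding one with $p_2(\cdot)$ in place of $p_1(\cdot)$ and smoothness $\beta$ in place of $\alpha$. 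The key mechanism is: raising $2^{v\alpha(x)}|\varphi_v\ast f(x)|/|P|^{\tau(x)}$ to the power $p_1(x)$ is dominated by raising the $\beta$-weighted quantity to the smaller power $p_2(x)$, because the excess exponent $p_1(x)-p_2(x)\ge0$ is absorbed by the factor $2^{v n(1/p_2(x)-1/p_1(x))}|P|^{n(1/p_2(x)-1/p_1(x))}\le 1$ coming from $v\ge v_P$ and $|P|\le1$. After that reduction the right-hand side is exactly the $F_{p_2(\cdot),q(\cdot)}^{\beta(\cdot)}$ modular on $P$, which is $\le1$ by assumption — here one also uses that $F_{p_2(\cdot),q(\cdot)}^{\beta(\cdot)}$ with $\tau\equiv0$ does not see the cube $P$, so one may even drop the $|P|$ normalisation. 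To pass the pointwise inequality through the $L^{p_1(\cdot)}$ and $L^{p_2(\cdot)}$ (quasi-)norms I would invoke the unit-ball property and the modular characterisation, exactly as in the proof of Lemma \ref{new-equinorm}.

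For the remaining case $|P|>1$ the sum starts at $v=0$, and now the normalising factor $|P|^{-\tau(x)}\le1$ only helps, while the roles of the $2^{vn(\cdot)}$ factors must be traded through the relation $\beta=\alpha+n\tau+\tfrac{n}{p_2}-\tfrac{n}{p_1}$ directly; the argument is the same algebra as above with the $|P|$ powers moved around using that $\tau$ is log-Hölder continuous (to commute $|P|^{\tau(\cdot)}$ past the implicit convolution/maximal-type estimates, exactly as in the $|P|\le1$ case of the previous theorem, where Theorem \ref{DHHR-estimate} is invoked only to move $|P|^{\tau(\cdot)}$ inside). The main obstacle I anticipate is the bookkeeping for non-constant exponents: the inequality $a^{p_1(x)}\le a^{p_2(x)}\cdot(\text{gain})^{p_1(x)-p_2(x)}$ only works when $a\le1$ after an appropriate normalisation, so I must first rescale $f$ by a fixed constant and split $|\varphi_v\ast f(x)|$ according to whether the weighted quantity exceeds $1$ — exactly the kind of trichotomy handled by the unit-ball property of $L^{p(\cdot)}(\ell^{q(\cdot)})$. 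Modulo that routine (but careful) splitting, no new estimate beyond Lemma \ref{new-equinorm}, Theorem \ref{DHHR-estimate} and the modular relations is needed, so the result follows by the scaling argument.
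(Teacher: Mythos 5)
Your plan diverges from the paper's in a way that matters. The paper factors the statement into two pieces: first the Sobolev embedding
\begin{equation*}
F_{{p}_{2}{(\cdot )},q{(\cdot )}}^{{\alpha (\cdot )+n\tau (\cdot )+\frac{n}{{p}_{2}{(\cdot )}}-\frac{n}{{p}_{1}{(\cdot )}}}}\hookrightarrow F_{{p}_{1}{(\cdot )},q{(\cdot )}}^{{\alpha (\cdot )+n\tau (\cdot )}},
\end{equation*}
which it does not prove but quotes from Vyb\'{\i}ral \cite{V}; and second the elementary step $F_{{p}_{1}{(\cdot )},q{(\cdot )}}^{{\alpha (\cdot )+n\tau (\cdot )}}\hookrightarrow \mathfrak{F}_{{p}_{1}{(\cdot )},q{(\cdot )}}^{{\alpha (\cdot ),\tau (\cdot )}}$, which only needs $|P|^{-\tau (x)}\leq 1$ when $|P|>1$ and $|P|^{-\tau (x)}=2^{nv_{P}\tau (x)}\leq 2^{vn\tau (x)}$ when $|P|\leq 1$ and $v\geq v_{P}$. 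You instead try to absorb the Sobolev shift $\frac{n}{p_{2}}-\frac{n}{p_{1}}$ by a pointwise exponent-trading argument, and this is where the gap is. Writing $A=2^{v\alpha (x)}|\varphi _{v}\ast f(x)|\,|P|^{-\tau (x)}$ and $B=2^{v\beta (x)}|\varphi _{v}\ast f(x)|$ with $\beta =\alpha +n\tau +\frac{n}{p_{2}}-\frac{n}{p_{1}}$, one indeed has $A\leq B$ (your claimed ``gain'' factor $2^{n(v-v_{P})(1/p_{2}-1/p_{1})}$ actually equals $B/A$ up to the $\tau$ part and is $\geq 1$, not $\leq 1$), so $A^{p_{1}(x)}\leq B^{p_{2}(x)}A^{p_{1}(x)-p_{2}(x)}$ and you must bound $A^{p_{1}(x)-p_{2}(x)}$. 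But the only uniform pointwise information available from $\Vert f\Vert _{F_{p_{2}(\cdot ),q(\cdot )}^{\beta (\cdot )}}\leq 1$ is of the type $2^{v(\beta (x)-n/p_{2}(x))}|\varphi _{v}\ast f(x)|\leq c$, which gives $A\lesssim 2^{vn/p_{1}(x)}$ --- unbounded in $v$. No rescaling of $f$ by a fixed constant fixes this, and the ``split according to whether the weighted quantity exceeds $1$'' is exactly the hard region; handling it is the content of the Sobolev embedding theorem for variable Triebel--Lizorkin spaces (which, even for constant exponents, is not a level-by-level modular inequality because the $\ell ^{q}$-sum sits inside the $L^{p}$-norm).

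Concretely: either invoke \cite{V} for the Sobolev part, as the paper does, and then your remaining task reduces to the easy absorption of $|P|^{-\tau (\cdot )}$ into $2^{vn\tau (\cdot )}$ (for which, contrary to your sketch of the case $|P|>1$, neither log-H\"{o}lder continuity of $\tau$ nor Theorem \ref{DHHR-estimate} is needed --- one simply drops $|P|^{-\tau (\cdot )}\leq 1$); or supply a genuine proof of the Sobolev shift, which your current pointwise algebra does not provide.
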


\begin{proof}
Using the Sobolev embeddings%
\begin{equation*}
F_{{p}_{2}{(\cdot )},q{(\cdot )}}^{{\alpha (\cdot )+n\tau (\cdot )+\frac{n}{{%
p}_{2}{(\cdot )}}-\frac{n}{{p}_{1}{(\cdot )}}}}\hookrightarrow F_{{p}_{1}{%
(\cdot )},q{(\cdot )}}^{{\alpha (\cdot )+n\tau (\cdot )}},
\end{equation*}%
see \cite{V} it is sufficient to prove that $F_{{p}_{1}{(\cdot )},q{(\cdot )}%
}^{{\alpha (\cdot )+n\tau (\cdot )}}\hookrightarrow \mathfrak{F}_{{p}_{1}{%
(\cdot )},q{(\cdot )}}^{{\alpha (\cdot ),\tau (\cdot )}}$. We have 
\begin{equation*}
\sup_{P\in \mathcal{Q},|P|>1}\Big\|\Big(\frac{2^{v\alpha \left( \cdot
\right) }\varphi _{v}\ast f}{\left\vert P\right\vert ^{\tau (\cdot )}}\chi
_{P}\Big)_{v\geq v_{P}^{+}}\Big\|_{L^{{p}_{1}{(\cdot )}}(\ell ^{q(\cdot
)})}\leq \left\Vert \left( 2^{v\alpha \left( \cdot \right) }\varphi _{v}\ast
f\right) _{v\geq 0}\right\Vert _{L^{{p}_{1}{(\cdot )}}(\ell ^{q(\cdot )})}.
\end{equation*}%
In view of the definition of $F_{{p}_{1}{(\cdot )},q{(\cdot )}}^{{\alpha
(\cdot )}}$ spaces the last expression is bounded by $\left\Vert
f\right\Vert _{F_{{p}_{1}{(\cdot )},q(\cdot )}^{\alpha (\cdot )}}\leq
\left\Vert f\right\Vert _{F_{{p}_{1}{(\cdot )},q(\cdot )}^{\alpha (\cdot ){%
+n\tau (\cdot )}}}$. Now we have the estimates 
\begin{eqnarray*}
&&\sup_{P\in \mathcal{Q},|P|\leq 1}\Big\|\Big(\frac{2^{v\alpha \left( \cdot
\right) }\varphi _{v}\ast f}{\left\vert P\right\vert ^{\tau (\cdot )}}\chi
_{P}\Big)_{v\geq v_{P}^{+}}\Big\|_{L^{{p}_{1}{(\cdot )}}(\ell ^{q(\cdot )})}
\\
&\leq &\sup_{P\in \mathcal{Q},|P|\leq 1}\Big\|\Big(2^{v(\alpha \left( \cdot
\right) {+n\tau (\cdot ))+n\tau (\cdot )(v_{P}-v)}}\varphi _{v}\ast f\Big)%
_{v\geq v_{P}}\Big\|_{L^{{p}_{1}{(\cdot )}}(\ell ^{q(\cdot )})} \\
&\leq &\sup_{P\in \mathcal{Q},|P|\leq 1}\left\Vert \left( 2^{v(\alpha \left(
\cdot \right) {+n\tau (\cdot ))}}\varphi _{v}\ast f\right) _{v\geq
0}\right\Vert _{L^{{p}_{1}{(\cdot )}}(\ell ^{q(\cdot )})}\leq \left\Vert
f\right\Vert _{F_{{p}_{1}{(\cdot )},q(\cdot )}^{\alpha (\cdot ){+n\tau
(\cdot )}}},
\end{eqnarray*}%
which completes the proof.
\end{proof}

Let $p,u\in \mathcal{P}_{0}$ be such that $0<p(\cdot )\leq u(\cdot )<\infty $%
. The variable Morrey space $M_{p(\cdot )}^{u(\cdot )}$ is defined to be the
set of all $p(\cdot )$-locally Lebesgue-integrable functions $f$ on $\mathbb{%
R}^{n}$ such that%
\begin{equation*}
\left\Vert f\right\Vert _{M_{p(\cdot )}^{u(\cdot )}}=\sup_{P\in \mathcal{Q}}%
\Big\|\frac{f}{\left\vert P\right\vert ^{\frac{1}{p(\cdot )}-\frac{1}{%
u(\cdot )}}}\chi _{P}\Big\|_{{p(\cdot )}}<\infty .
\end{equation*}%
One recognizes immediately that if $p$ and $u$ are constants, then we obtain
the usual Morrey space $M_{p}^{u}$.

Let $\alpha :\mathbb{R}^{n}\rightarrow \mathbb{R}$ and $p,q,u\in \mathcal{P}%
_{0}$\ be such that $0<p(\cdot )\leq u(\cdot )<\infty $. Let $\Phi $ and $%
\varphi $ satisfy $\mathrm{\eqref{Ass1}}$ and $\mathrm{\eqref{Ass2}}$,
respectively and we put $\varphi _{v}=2^{vn}\varphi (2^{v}\cdot )$. The
Triebel-Lizorkin-Morrey space $\mathcal{E}_{u(\cdot ),p(\cdot )}^{\alpha
(\cdot ),q(\cdot )}$\ is the collection of all $f\in \mathcal{S}^{\prime }(%
\mathbb{R}^{n})$\ such that 
\begin{equation*}
\left\Vert f\right\Vert _{\mathcal{E}_{u(\cdot ),p(\cdot )}^{\alpha (\cdot
),q(\cdot )}}:=\Big\|\Big(2^{v\alpha \left( \cdot \right) }\varphi _{v}\ast f%
\Big)_{v\geq 0}\Big\|_{M_{p(\cdot )}^{u(\cdot )}(\ell ^{q(\cdot )})}<\infty ,
\end{equation*}%
where $\varphi _{0}$ is replaced by $\Phi $.

\begin{theorem}
Let $\alpha \in C_{\mathrm{loc}}^{\log }$ and $p,q,u\in \mathcal{P}%
_{0}^{\log }$\ with $0<p^{-}<p(\cdot )\leq u(\cdot )<u^{+}<\infty $\ and\ $%
0<q^{-}\leq q^{+}<\infty $.\ Then%
\begin{equation*}
\mathfrak{F}_{{p(\cdot )},q{(\cdot )}}^{{\alpha (\cdot ),}\frac{1}{p(\cdot )}%
-\frac{1}{u(\cdot )}}=\mathcal{E}_{u(\cdot ),p(\cdot )}^{\alpha (\cdot
),q(\cdot )},
\end{equation*}%
with equivalent quasi-norms.
\end{theorem}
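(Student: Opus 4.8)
The plan is to deduce the identity directly from the reformulation of $\mathfrak{F}_{p(\cdot),q(\cdot)}^{\alpha(\cdot),\tau(\cdot)}$ in which the inner summation runs over \emph{all} $v\geq 0$ rather than over $v\geq v_P^{+}$, because for the specific choice $\tau(\cdot):=\frac{1}{p(\cdot)}-\frac{1}{u(\cdot)}$ that reformulated quasi-norm is, after unravelling the definitions, literally the Triebel--Lizorkin--Morrey quasi-norm $\|\cdot\|_{\mathcal{E}_{u(\cdot),p(\cdot)}^{\alpha(\cdot),q(\cdot)}}$.

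First I would check that the hypotheses of the theorem permitting the replacement of $v\geq v_P^{+}$ by $v\geq 0$ are met here with $\tau(\cdot)=\frac{1}{p(\cdot)}-\frac{1}{u(\cdot)}$. Since $p,u\in\mathcal{P}_0^{\log}$ we have $\frac1p,\frac1u\in C^{\log}$, hence $\tau=\frac1p-\frac1u\in C^{\log}\subset C_{\mathrm{loc}}^{\log}$; from $p(\cdot)\leq u(\cdot)$ we get $\tau(\cdot)\geq 0$, so $\tau^{-}\geq 0$; moreover $p^{+}\leq u^{+}<\infty$ and $q^{+}<\infty$. The decisive point is the sign condition: $\bigl(\tau-\frac1p\bigr)(\cdot)=-\frac{1}{u(\cdot)}$, so $\bigl(\tau-\frac1p\bigr)^{+}=-1/u^{+}<0$, precisely because $u^{+}<\infty$. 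Hence that theorem applies and gives
\[
\|f\|_{\mathfrak{F}_{p(\cdot),q(\cdot)}^{\alpha(\cdot),\tau(\cdot)}}\;\approx\;\sup_{P\in\mathcal{Q}}\Big\|\Big(\frac{2^{v\alpha(\cdot)}\varphi_v\ast f}{|P|^{\tau(\cdot)}}\chi_P\Big)_{v\geq 0}\Big\|_{L^{p(\cdot)}(\ell^{q(\cdot)})},
\]
with $\varphi_0$ replaced by $\Phi$.

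Next I would unfold the right-hand side. For a fixed $P$, pulling the nonnegative scalar $|P|^{-\tau(x)}\chi_P(x)$ out of the pointwise $\ell^{q(x)}$-norm gives
\[
\Big\|\Big(\frac{2^{v\alpha(\cdot)}\varphi_v\ast f}{|P|^{\tau(\cdot)}}\chi_P\Big)_{v\geq 0}\Big\|_{L^{p(\cdot)}(\ell^{q(\cdot)})}=\Big\|\frac{\chi_P}{|P|^{\tau(\cdot)}}\,\bigl\|(2^{v\alpha(\cdot)}\varphi_v\ast f)_{v\geq 0}\bigr\|_{\ell^{q(\cdot)}}\Big\|_{p(\cdot)}.
\]
Taking the supremum over all dyadic cubes $P$ and using $\tau(\cdot)=\frac{1}{p(\cdot)}-\frac{1}{u(\cdot)}$, the definition of the variable Morrey space shows that this supremum equals $\bigl\|\,\|(2^{v\alpha(\cdot)}\varphi_v\ast f)_{v\geq 0}\|_{\ell^{q(\cdot)}}\,\bigr\|_{M_{p(\cdot)}^{u(\cdot)}}=\|f\|_{\mathcal{E}_{u(\cdot),p(\cdot)}^{\alpha(\cdot),q(\cdot)}}$ (both spaces being formed with the same pair $\Phi,\varphi$). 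Combining this with the previous display yields $\|f\|_{\mathfrak{F}_{p(\cdot),q(\cdot)}^{\alpha(\cdot),\tau(\cdot)}}\approx\|f\|_{\mathcal{E}_{u(\cdot),p(\cdot)}^{\alpha(\cdot),q(\cdot)}}$ for every $f\in\mathcal{S}'(\mathbb{R}^n)$, which is the asserted coincidence with equivalent quasi-norms.

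The whole argument is essentially bookkeeping; the only genuinely substantive input is the ``$v\geq 0$'' reformulation theorem, and the single place demanding care is the verification that its hypothesis $\bigl(\tau-\frac1p\bigr)^{+}<0$ holds --- which is exactly where the standing assumption $u^{+}<\infty$ enters. Should one prefer to avoid invoking that theorem, one can argue directly: the embedding $\mathcal{E}_{u(\cdot),p(\cdot)}^{\alpha(\cdot),q(\cdot)}\hookrightarrow\mathfrak{F}_{p(\cdot),q(\cdot)}^{\alpha(\cdot),\tau(\cdot)}$ is immediate, since restricting the $\ell^{q(\cdot)}$-sum from $v\geq 0$ to $v\geq v_P^{+}$ only decreases the norm, while for the reverse inclusion one must control the finitely many omitted low-frequency terms $v=0,\dots,v_P-1$ on dyadic cubes $P$ with $|P|<1$; this is done via the pointwise bound $2^{v(\alpha(x)+n\tau(x)-n/p(x))}|\varphi_v\ast f(x)|\lesssim\|f\|_{\mathfrak{F}_{p(\cdot),q(\cdot)}^{\alpha(\cdot),\tau(\cdot)}}$ of Remark~\ref{new-est} together with a geometric-series summation exploiting $\bigl(\frac1p-\tau\bigr)^{-}=1/u^{+}>0$ --- which is precisely the content of the proof of the ``$v\geq 0$'' theorem.
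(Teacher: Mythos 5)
Your proposal is correct and rests on the same underlying computation as the paper's proof: the paper establishes the nontrivial inclusion directly by invoking the pointwise bound \eqref{emd} (which for $\tau=\frac{1}{p(\cdot)}-\frac{1}{u(\cdot)}$ reads $2^{v\alpha(x)}|\varphi_v\ast f(x)|\lesssim 2^{vn/u(x)}$) and summing the resulting geometric series over $0\le v\le v_P^{+}$, which is precisely the content of the ``$v\ge 0$'' reformulation theorem you cite, the convergence of the series coming from $(\tau-\frac1p)^{+}=-1/u^{+}<0$. Your packaging via that theorem, together with the observation that the reformulated quasi-norm is literally the $\mathcal{E}_{u(\cdot),p(\cdot)}^{\alpha(\cdot),q(\cdot)}$ quasi-norm, is a valid and arguably cleaner shortcut, and your fallback direct argument coincides with the paper's own proof.
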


\begin{proof}
Obviously we need to prove that $\left\Vert f\right\Vert _{\mathcal{E}%
_{u(\cdot ),p(\cdot )}^{\alpha (\cdot ),q(\cdot )}}\lesssim 1$\ for any\ $%
f\in \mathfrak{F}_{{p(\cdot )},q{(\cdot )}}^{{\alpha (\cdot ),}\frac{1}{%
p(\cdot )}-\frac{1}{u(\cdot )}}$ with $\left\Vert f\right\Vert _{\mathfrak{F}%
_{{p(\cdot )},q{(\cdot )}}^{{\alpha (\cdot ),}\frac{1}{p(\cdot )}-\frac{1}{%
u(\cdot )}}}\leq 1$. We must show the inequality%
\begin{equation*}
\Big\|\Big(\dsum_{v=0}^{v_{P}^{+}}\frac{\left\vert 2^{v\alpha \left( \cdot
\right) }\varphi _{v}\ast f\right\vert ^{q(\cdot )}}{\left\vert P\right\vert
^{(\frac{1}{p(\cdot )}-\frac{1}{u(\cdot )})q(\cdot )}}\chi _{P}\Big)%
^{1/q(\cdot )}\Big\|_{p(\cdot )}\lesssim 1
\end{equation*}%
for any dyadic cube $P$. Applying the property $\mathrm{\eqref{emd}}$, we
obtain%
\begin{equation*}
2^{v\alpha \left( x\right) }|\varphi _{v}\ast f(x)|\lesssim 2^{\frac{vn}{u(x)%
}}
\end{equation*}%
for any $x\in \mathbb{R}^{n}$, with the implicit constant independent of $k$
and $x$. Hence the last quasi-norm is bounded by%
\begin{equation*}
\Big\|2^{\frac{nv_{P}^{+}}{p(\cdot )}}\Big(%
\dsum_{v=0}^{v_{P}^{+}}2^{(v-v_{P}^{+})\frac{nq(\cdot )}{u(\cdot )}}\chi _{P}%
\Big)^{1/q(\cdot )}\Big\|_{L^{p(\cdot )}}\lesssim \Big\|2^{\frac{nv_{P}^{+}}{%
p(\cdot )}}\chi _{P}\Big\|_{p(\cdot )}\lesssim 1.
\end{equation*}%
This finishes the proof.
\end{proof}

\section{Atomic decomposition}

The idea of atomic decompositions leads back to M. Frazier and B. Jawerth in
their series of papers \cite{FJ86}, \cite{FJ90}, see also \cite{T3}. The
main goal of this section is to prove an atomic decomposition result for $%
\mathfrak{F}_{p(\cdot ),q(\cdot )}^{\alpha (\cdot ),\tau (\cdot )}$.

We now present a fundamental characterization of spaces under consideration.

\begin{theorem}
\label{fun-char}Let $\mathbb{\tau }$, $\alpha \in C_{\mathrm{loc}}^{\log
},\tau ^{-}>0$ and $p,q\in \mathcal{P}_{0}^{\log }$\ with $0<p^{-}\leq
p^{+}<\infty $ and $0<q^{-}\leq q^{+}<\infty $. Let $m$ be as in Lemma \ref%
{DHR-lemma1}, $a>\frac{\tau ^{+}m}{\tau ^{-}p^{-}}$ and $\Phi $ and $\varphi 
$ satisfy $\mathrm{\eqref{Ass1}}$ and $\mathrm{\eqref{Ass2}}$, respectively.
Then%
\begin{equation*}
\left\Vert f\right\Vert _{\mathfrak{F}_{p(\cdot ),q(\cdot )}^{\alpha (\cdot
),\tau (\cdot )}}^{\blacktriangledown }:=\sup_{P\in \mathcal{Q}}\Big\|\Big(%
\frac{\varphi _{v}^{\ast ,a}2^{v\alpha (\cdot )}f}{\left\vert P\right\vert
^{\tau (\cdot )}}\chi _{P}\Big)_{v\geq v_{P}^{+}}\Big\|_{L^{p(\cdot )}(\ell
^{q(\cdot )})}
\end{equation*}%
\textit{is an equivalent quasi-norm in }$\mathfrak{F}_{p(\cdot ),q(\cdot
)}^{\alpha (\cdot ),\tau (\cdot )}$.
\end{theorem}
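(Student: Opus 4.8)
The statement is a Peetre-type maximal function characterization of $\mathfrak{F}_{p(\cdot ),q(\cdot )}^{\alpha (\cdot ),\tau (\cdot )}$, where $\varphi _{v}^{\ast ,a}f(x):=\sup_{y\in \mathbb{R}^{n}}\frac{|\varphi _{v}\ast f(y)|}{(1+2^{v}|x-y|)^{a}}$. The plan is to prove the two inequalities separately. The trivial direction is $\left\Vert f\right\Vert _{\mathfrak{F}_{p(\cdot ),q(\cdot )}^{\alpha (\cdot ),\tau (\cdot )}}\lesssim \left\Vert f\right\Vert _{\mathfrak{F}_{p(\cdot ),q(\cdot )}^{\alpha (\cdot ),\tau (\cdot )}}^{\blacktriangledown }$, which follows immediately from the pointwise bound $|\varphi _{v}\ast f(x)|\leq \varphi _{v}^{\ast ,a}f(x)$ (take $y=x$ in the supremum). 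So the whole work is in the reverse estimate $\left\Vert f\right\Vert _{\mathfrak{F}_{p(\cdot ),q(\cdot )}^{\alpha (\cdot ),\tau (\cdot )}}^{\blacktriangledown }\lesssim \left\Vert f\right\Vert _{\mathfrak{F}_{p(\cdot ),q(\cdot )}^{\alpha (\cdot ),\tau (\cdot )}}$, and by homogeneity I may assume $\left\Vert f\right\Vert _{\mathfrak{F}_{p(\cdot ),q(\cdot )}^{\alpha (\cdot ),\tau (\cdot )}}=1$ and aim to bound the relevant modular uniformly over dyadic cubes $P$.

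\textbf{Key steps.} First I would recall the standard sub-mean-value inequality for band-limited functions: since $\mathrm{supp}\,\mathcal{F}\varphi _{v}$ is contained in an annulus of radius $\approx 2^{v}$, Lemma~\ref{r-trick} (applied with $\theta$ a suitable bump and $\omega=\varphi$, $R\approx N\approx 2^{v}$) yields, for any $0<r<\infty$ and $m>n$,
\begin{equation*}
\varphi _{v}^{\ast ,a}f(x)\leq c\,\big(\eta _{v,ar}\ast |\varphi _{v}\ast f|^{r}(x)\big)^{1/r},\qquad x\in \mathbb{R}^{n},
\end{equation*}
provided $ar>n$; this is the classical trick that trades the Peetre maximal function for a convolution with $\eta _{v,ar}$ of $|\varphi _{v}\ast f|^{r}$. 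Here I choose $r$ with $r<p^{-}$ and, crucially, $r$ close enough to $p^{-}$ that $ar>m$ — this is where the hypothesis $a>\frac{\tau ^{+}m}{\tau ^{-}p^{-}}$ enters: it guarantees that one can pick $r\in (0,p^{-})$ with $\frac{\tau ^{+}m}{\tau ^{-}}<ar$, i.e. $ar-$(the loss coming from moving $|P|^{\tau(\cdot)}$ through the convolution)$\,>m$, matching the $m$ required by Lemma~\ref{DHR-lemma1}. Next, raising to the power $2^{v\alpha(x)}$ and using Lemma~\ref{DHR-lemma} to move the smoothness weight inside the convolution, I get
\begin{equation*}
2^{v\alpha (x)}\varphi _{v}^{\ast ,a}f(x)\leq c\,\big(\eta _{v,m}\ast |2^{v\alpha (\cdot )}\varphi _{v}\ast f|^{r}(x)\big)^{1/r}
\end{equation*}
after renaming $ar-R=m$ where $R$ absorbs $c_{\log}(\alpha r)$. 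Then I would divide by $|P|^{\tau(\cdot)}$ and, exactly as in the $|P|\le1$ branch of the preceding Sobolev-embedding proof, use the log-Hölder continuity of $\tau$ to move $|P|^{-\tau(\cdot)}$ inside the convolution at the cost of replacing $\eta_{v,m}$ by $\eta_{v,m-c_{\log}(\tau)}$, still with index $>m$ in the sense needed. This reduces matters, after taking $\ell^{q(\cdot)/r}$ norms in $v$ and $L^{p(\cdot)/r}$ norms in $x$ (here one passes through the scalar form of $L^{p(\cdot)}(\ell^{q(\cdot)})$ for the exponents $p/r$, $q/r$, which are again in $\mathcal{P}_0^{\log}$ with $(p/r)^->1$, $(q/r)^->1$ is \emph{not} needed — only $\tau^+/\tau^-<\min(p^-,q^-)/r$, as in Lemma~\ref{DHR-lemma1}), to an application of Lemma~\ref{DHR-lemma1} to the sequence $\big(|2^{v\alpha(\cdot)}\varphi_v\ast f|^r/|P|^{\tau(\cdot)r}\big)_v$. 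That lemma gives
\begin{equation*}
\Big\|\Big(\frac{\eta_{v,m}\ast(2^{v\alpha(\cdot)r}|\varphi_v\ast f|^r)}{|P|^{\tau(\cdot)r}}\chi_P\Big)_v\Big\|_{L^{p(\cdot)/r}(\ell^{q(\cdot)/r})}\lesssim \Big\|\Big(\frac{2^{v\alpha(\cdot)r}|\varphi_v\ast f|^r}{|P|^{\tau(\cdot)r}}\chi_P\Big)_v\Big\|_{L^{p(\cdot)/r}(\ell^{q(\cdot)/r})},
\end{equation*}
and the right-hand side is $\left\Vert f\right\Vert_{\mathfrak{F}_{p(\cdot),q(\cdot)}^{\alpha(\cdot),\tau(\cdot)}}^r=1$ after undoing the $r$-th powers. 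Taking $r$-th roots closes the estimate; the range $v\ge v_P^+$ is preserved throughout since $r$-th powers and the convolutions do not mix different cubes' index ranges.

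\textbf{Main obstacle.} The delicate point is the bookkeeping of the decay exponent $m$: the chain of reductions loses decay at three places — in the $r$-trick ($ar\to ar-0$, but one needs $ar>$ the target), in Lemma~\ref{DHR-lemma} (loss $\approx c_{\log}(\alpha)r$), and in moving $|P|^{\tau(\cdot)}$ inside (loss $\approx c_{\log}(\tau)$, plus a genuine $\tau^+/\tau^-$-type loss because $|P|^{\tau(x)}$ and $|P|^{\tau(y)}$ differ when $|P|\ge1$). One must verify that after all these losses the surviving exponent is still at least the $m>n\tau^++2n+w$, $w>n+c_{\log}(1/q)+c_{\log}(\tau)$ demanded by Lemma~\ref{DHR-lemma1}, and that the condition $\tau^+/\tau^-<\min(p^-,q^-)/r$ there is compatible with $r<p^-$ — both are arranged precisely by the hypothesis $a>\frac{\tau^+m}{\tau^-p^-}$, which leaves room to choose $r$ slightly below $p^-$. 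Secondly, one should check the two cases $|P|>1$ and $|P|\le1$ separately (as in the Sobolev-embedding proof above): for $|P|>1$ one splits $\sum_{v=0}^{v_P^+}$ off and handles it via the pointwise bound \eqref{emd} from Remark~\ref{new-est} exactly as in Lemma~\ref{new-equinorm3}, while for $|P|\le1$ the argument above applies directly. Apart from this exponent arithmetic the proof is routine, following the template of \cite[Theorem 3.14]{D6} and the local-means characterization in \cite{KV121}.
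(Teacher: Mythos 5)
Your proposal is correct and follows essentially the same route as the paper: the easy direction via the pointwise bound, then the $r$-trick of Lemma \ref{r-trick} combined with the submultiplicativity of the Peetre weight to dominate $\varphi_v^{\ast,a}2^{v\alpha(\cdot)}f$ by $\big(\eta_{v,at}\ast(2^{v\alpha(\cdot)}|\varphi_v\ast f|)^{t}\big)^{1/t}$ with $a>m/t>m/p^-$ (Lemma \ref{DHR-lemma} absorbing the variable smoothness), and finally Lemma \ref{DHR-lemma1} applied with exponents $p/t$, $q/t$. The extra bookkeeping you describe (moving $|P|^{-\tau(\cdot)}$ inside the convolution, the $|P|\gtrless 1$ dichotomy) is already packaged inside Lemma \ref{DHR-lemma1} in the paper's version, so no separate case analysis is needed.
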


\begin{proof}
It is easy to see that for any $f\in \mathcal{S}^{\prime }(\mathbb{R}^{n})$
with $\left\Vert f\right\Vert _{\mathfrak{F}_{p(\cdot ),q(\cdot )}^{\alpha
(\cdot ),\tau (\cdot )}}^{\blacktriangledown }<\infty $ and any $x\in 
\mathbb{R}^{n}$ we have%
\begin{equation*}
2^{v\alpha (x)}\left\vert \varphi _{v}\ast f(x)\right\vert \leq \varphi
_{v}^{\ast ,a}2^{v\alpha (\cdot )}f(x)\text{.}
\end{equation*}%
This shows that $\left\Vert f\right\Vert _{\mathfrak{F}_{p(\cdot ),q(\cdot
)}^{\alpha (\cdot ),\tau (\cdot )}}\leq \left\Vert f\right\Vert _{\mathfrak{F%
}_{p(\cdot ),q(\cdot )}^{\alpha (\cdot ),\tau (\cdot )}}^{\blacktriangledown
}$. Let $t>0$ be such that $a>\frac{m}{t}>\frac{m}{p^{-}}$. By Lemmas \ref%
{r-trick}\ and \ref{DHR-lemma} the estimates%
\begin{eqnarray}
2^{v\alpha \left( y\right) }\left\vert \varphi _{v}\ast f(y)\right\vert
&\leq &C_{1}\text{ }2^{v\alpha \left( y\right) }\left( \eta _{v,w}\ast
|\varphi _{v}\ast f|^{t}(y)\right) ^{1/t}  \notag \\
&\leq &C_{2}\text{ }\left( \eta _{v,w-c_{\log }(\alpha )}\ast (2^{v\alpha
\left( \cdot \right) }|\varphi _{v}\ast f|)^{t}(y)\right) ^{1/t}
\label{esti-conv}
\end{eqnarray}%
are true for any $y\in \mathbb{R}^{n},v\in \mathbb{N}_{0}$ and any $w>0$.
Now divide both sides of $\mathrm{\eqref{esti-conv}}$ by $\left(
1+2^{v}\left\vert x-y\right\vert \right) ^{a}$, in the right-hand side we
use the inequality%
\begin{equation*}
\left( 1+2^{v}\left\vert x-y\right\vert \right) ^{-a}\leq \left(
1+2^{v}\left\vert x-z\right\vert \right) ^{-a}\left( 1+2^{v}\left\vert
y-z\right\vert \right) ^{a},\quad x,y,z\in \mathbb{R}^{n},
\end{equation*}%
in the left-hand side take the supremum over $y\in \mathbb{R}^{n}$ and get
for all $f\in \mathfrak{F}_{p(\cdot ),q(\cdot )}^{\alpha (\cdot ),\tau
(\cdot )}$, any $x\in P$ any $v\geq v_{P}^{+}$ and any $w>at+c_{\log
}(\alpha )$%
\begin{equation*}
\left( \varphi _{v}^{\ast ,a}2^{v\alpha \left( \cdot \right) }f(x)\right)
^{t}\leq C_{2}\text{ }\eta _{v,at}\ast (2^{v\alpha \left( \cdot \right)
}|\varphi _{v}\ast f|)^{t}(x)
\end{equation*}%
where $C_{2}>0$ is independent of $x,v$ and $f$. Lemma \ref{DHR-lemma1}
gives that 
\begin{eqnarray*}
&&\left\Vert f\right\Vert _{\mathfrak{F}_{p(\cdot ),q(\cdot )}^{\alpha
(\cdot ),\tau (\cdot )}}^{\blacktriangledown }\lesssim C\sup_{P\in \mathcal{Q%
}}\Big\|\Big(\frac{\eta _{v,at}\ast (2^{v\alpha \left( \cdot \right)
}|\varphi _{v}\ast f|)^{t}}{\left\vert P\right\vert ^{\tau (\cdot )t}}\chi
_{P}\Big)_{v\geq v_{P}^{+}}\Big\|_{L^{\frac{p(\cdot )}{t}}(\ell ^{\frac{%
q(\cdot )}{t}})}^{1/t} \\
&\leq &C\left\Vert \left( 2^{v\alpha \left( \cdot \right) }\varphi _{v}\ast
f\right) _{v}\right\Vert _{L^{p(\cdot )}(\ell ^{q(\cdot )})}=C\left\Vert
f\right\Vert _{\mathfrak{F}_{p(\cdot ),q(\cdot )}^{\alpha (\cdot ),\tau
(\cdot )}}.
\end{eqnarray*}%
\noindent The proof is complete.
\end{proof}

Atoms are the building blocks for the atomic decomposition.

\begin{definition}
\label{Atom-Def}Let $K\in \mathbb{N}_{0},L+1\in \mathbb{N}_{0}$ and let $%
\gamma >1$. A $K$-times continuous differentiable function $a\in C^{K}(%
\mathbb{R}^{n})$ is called $[K,L]$-atom centered at $Q_{v,m}$, $v\in \mathbb{%
N}_{0}$ and $m\in \mathbb{Z}^{n}$, if

\begin{equation}
\mathrm{supp}\text{ }a\subseteq \gamma Q_{v,m}  \label{supp-cond}
\end{equation}

\begin{equation}
|\partial ^{\beta }a(x)|\leq 2^{v(|\beta |+1/2)}\text{,\quad for\quad }0\leq
|\beta |\leq K,x\in \mathbb{R}^{n}  \label{diff-cond}
\end{equation}%
and if%
\begin{equation}
\int_{\mathbb{R}^{n}}x^{\beta }a(x)dx=0,\text{\quad for\quad }0\leq |\beta
|\leq L\text{ and }v\geq 1.  \label{mom-cond}
\end{equation}
\end{definition}

If the atom $a$ located at $Q_{v,m}$, that means if it fulfills $\mathrm{%
\eqref{supp-cond}}$, then we will denote it by $a_{v,m}$. For $v=0$ or $L=-1$
there are no moment\ conditions\ $\mathrm{\eqref{mom-cond}}$ required.\vskip%
5pt

For proving the decomposition by atoms we need the following lemma, see
Frazier \& Jawerth \cite[Lemma 3.3]{FJ86}.

\begin{lemma}
\label{FJ-lemma}Let $\Phi $ and $\varphi $ satisfy, respectively, $\mathrm{%
\eqref{Ass1}}$ and $\mathrm{\eqref{Ass2}}$ and let $\varrho _{v,m}$ be an $%
\left[ K,L\right] $-atom. Then%
\begin{equation*}
\left\vert \varphi _{j}\ast \varrho _{v,m}(x)\right\vert \leq c\text{ }%
2^{(v-j)K+vn/2}\left( 1+2^{v}\left\vert x-x_{Q_{v,m}}\right\vert \right)
^{-M}
\end{equation*}%
if $v\leq j$, and%
\begin{equation*}
\left\vert \varphi _{j}\ast \varrho _{v,m}(x)\right\vert \leq c\text{ }%
2^{(j-v)(L+n+1)+vn/2}\left( 1+2^{j}\left\vert x-x_{Q_{v,m}}\right\vert
\right) ^{-M}
\end{equation*}%
if $v\geq j$, where $M$ is sufficiently large, $\varphi _{j}=2^{jn}\varphi
(2^{j}\cdot )$ and $\varphi _{0}$ is replaced by $\Phi $.
\end{lemma}

Now we come to the atomic decomposition theorem.

\begin{theorem}
\label{atomic-dec}\textit{Let }$\alpha ,\tau \in C_{\mathrm{loc}}^{\log
},\tau ^{-}>0$ \textit{and }$p,q\in \mathcal{P}_{0}^{\log }$\ with $%
0<p^{-}\leq p^{+}<\infty $ and $0<q^{-}\leq q^{+}<\infty $\textit{\ }Let $%
K,L+1\in \mathbb{N}_{0}$ such that%
\begin{equation}
K\geq ([\alpha ^{+}+n\tau ^{+}]+1)^{+},  \label{K,L,B-F-cond}
\end{equation}%
and%
\begin{equation}
L\geq \max (-1,[n(\frac{\tau ^{+}}{\tau ^{-}\min (1,p^{-},q^{-})}-1)-\alpha
^{-}]).  \label{K,L,B-cond}
\end{equation}%
Then $f\in \mathcal{S}^{\prime }(\mathbb{R}^{n})$ belongs to $\mathfrak{F}%
_{p(\cdot ),q(\cdot )}^{\alpha (\cdot ),\tau (\cdot )}$, if and only if it
can be represented as%
\begin{equation}
f=\sum\limits_{v=0}^{\infty }\sum\limits_{m\in \mathbb{Z}^{n}}\lambda
_{v,m}\varrho _{v,m},\text{ \ \ \ \ converging in }\mathcal{S}^{\prime }(%
\mathbb{R}^{n})\text{,}  \label{new-rep}
\end{equation}%
where $\varrho _{v,m}$ are $\left[ K,L\right] $-atoms and $\lambda
=\{\lambda _{v,m}\in \mathbb{C}:v\in \mathbb{N}_{0},m\in \mathbb{Z}^{n}\}\in 
\mathfrak{f}_{p(\cdot ),q(\cdot )}^{\alpha (\cdot ),\tau (\cdot )}$.
Furthermore, $\mathrm{inf}\left\Vert \lambda \right\Vert _{\mathfrak{f}%
_{p(\cdot ),q(\cdot )}^{\alpha (\cdot ),\tau (\cdot )}}$, where the infimum
is taken over admissible representations\ $\mathrm{\eqref{new-rep}}$\textrm{%
, }is an equivalent quasi-norm in $\mathfrak{F}_{p(\cdot ),q(\cdot
)}^{\alpha (\cdot ),\tau (\cdot )}$.
\end{theorem}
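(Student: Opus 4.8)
The argument splits into two implications. For the easier direction—that every $f$ admitting a representation \eqref{new-rep} lies in $\mathfrak{F}_{p(\cdot),q(\cdot)}^{\alpha(\cdot),\tau(\cdot)}$ with $\|f\|_{\mathfrak{F}_{p(\cdot),q(\cdot)}^{\alpha(\cdot),\tau(\cdot)}}\lesssim\|\lambda\|_{\mathfrak{f}_{p(\cdot),q(\cdot)}^{\alpha(\cdot),\tau(\cdot)}}$—I would apply $\varphi_j\ast$ to the series and use Lemma \ref{FJ-lemma} to control $|\varphi_j\ast\varrho_{v,m}(x)|$. Summing over $m\in\mathbb{Z}^n$ for fixed $v,j$ produces, after multiplying by $2^{j\alpha(x)}$ and invoking Lemma \ref{DHR-lemma} to shift the smoothness weight onto the $\eta$-kernel, an estimate of the form
\begin{equation*}
2^{j\alpha(x)}|\varphi_j\ast f(x)|\lesssim\sum_{v=0}^{\infty}2^{-|j-v|N}\,\eta_{\min(j,v),M}\ast\Big(\sum_{m}2^{v(\alpha(\cdot)+n/2)}|\lambda_{v,m}|\chi_{v,m}\Big)(x)
\end{equation*}
for a suitable large $N$ (here the conditions \eqref{K,L,B-F-cond} and \eqref{K,L,B-cond} on $K,L$ are exactly what guarantees $N>\tau^+$ after the $r$-trick of Lemma \ref{r-trick} is used to replace $\eta_{\min(j,v),M}\ast(\cdots)$ by $\big(\eta_{\min(j,v),Mr}\ast(\cdots)^r\big)^{1/r}$ with $r<\min(1,p^-,q^-)$). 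Then Lemma \ref{Key-lemma} absorbs the geometric factor $2^{-|j-v|N}$ in $\ell^{q(\cdot)}$, and Lemma \ref{DHR-lemma1} (the maximal-type inequality on $L_{p(\cdot)}^{\tau(\cdot)}(\ell^{q(\cdot)})$) disposes of the $\eta$-convolution, leaving precisely $\|\lambda\|_{\mathfrak{f}_{p(\cdot),q(\cdot)}^{\alpha(\cdot),\tau(\cdot)}}$. Convergence of \eqref{new-rep} in $\mathcal{S}'(\mathbb{R}^n)$ follows from the size estimates on $\lambda_{v,m}$ established in the lemma preceding Definition \ref{sequence-space} together with the bound on $T_\psi$.

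For the converse, given $f\in\mathfrak{F}_{p(\cdot),q(\cdot)}^{\alpha(\cdot),\tau(\cdot)}$ I would use the Calderón reproducing formula \eqref{Ass4}, writing
\begin{equation*}
f=\sum_{v=0}^{\infty}\sum_{m\in\mathbb{Z}^n}\lambda_{v,m}\varrho_{v,m},\qquad \lambda_{v,m}:=c\,2^{-vn/2}\sup_{y\in Q_{v,m}}|\widetilde{\varphi}_v\ast f(y)|,\quad \varrho_{v,m}:=\frac{1}{\lambda_{v,m}}\,(\widetilde{\varphi}_v\ast f)(2^{-v}m)\,\psi_v(\cdot-2^{-v}m),
\end{equation*}
with the usual modification at $v=0$. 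One checks that each $\varrho_{v,m}$ is (a fixed multiple of) a $[K,L]$-atom: the support condition comes from $\operatorname{supp}\mathcal{F}\psi\subset\overline{B(0,2)}\setminus B(0,1/2)$, the derivative bounds \eqref{diff-cond} from the normalization and the Schwartz decay of $\psi$, and the moment conditions \eqref{mom-cond} from $\operatorname{supp}\mathcal{F}\psi$ being bounded away from the origin (so $\psi$ has all moments vanishing, hence so does $\psi_v(\cdot-2^{-v}m)$). It then remains to estimate $\|\lambda\|_{\mathfrak{f}_{p(\cdot),q(\cdot)}^{\alpha(\cdot),\tau(\cdot)}}$. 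Since $\sum_m 2^{v(\alpha(x)+n/2)}|\lambda_{v,m}|\chi_{v,m}(x)\lesssim 2^{v\alpha(x)}\varphi_v^{*,a}f(x)$ for $x\in P$ (one passes from the sup over $Q_{v,m}$ to the Peetre maximal function, choosing $a$ large), the desired bound $\|\lambda\|_{\mathfrak{f}_{p(\cdot),q(\cdot)}^{\alpha(\cdot),\tau(\cdot)}}\lesssim\|f\|_{\mathfrak{F}_{p(\cdot),q(\cdot)}^{\alpha(\cdot),\tau(\cdot)}}^{\blacktriangledown}$ is immediate, and Theorem \ref{fun-char} identifies the right-hand side with $\|f\|_{\mathfrak{F}_{p(\cdot),q(\cdot)}^{\alpha(\cdot),\tau(\cdot)}}$. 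Together with the first implication this yields the claimed equivalence $\inf\|\lambda\|_{\mathfrak{f}_{p(\cdot),q(\cdot)}^{\alpha(\cdot),\tau(\cdot)}}\approx\|f\|_{\mathfrak{F}_{p(\cdot),q(\cdot)}^{\alpha(\cdot),\tau(\cdot)}}$.

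The main obstacle is the first (synthesis) implication, specifically the bookkeeping that matches the atom parameters $K,L$ to the exponents so that, after applying the $r$-trick of Lemma \ref{r-trick} with $r<\min(1,p^-,q^-)$ and Lemma \ref{Conv-est2}, the residual decay exponent in $|j-v|$ exceeds $\tau^+$; this is where \eqref{K,L,B-F-cond} is needed for $v\le j$ (high-frequency atoms) and \eqref{K,L,B-cond} for $v\ge j$ (the moment conditions compensating the lack of smoothness-weight gain), with the factor $\tau^+/(\tau^-\min(1,p^-,q^-))$ in \eqref{K,L,B-cond} tracking exactly the loss incurred in passing through the $\tau(\cdot)$-weighted norm and the sub-linearity exponent $r$. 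The variable-exponent subtleties—moving $2^{v\alpha(x)}$ and $|P|^{-\tau(x)}$ inside convolutions—are handled uniformly by Lemmas \ref{DHR-lemma} and \ref{key-estimate1} and by the observation already used in the proof of the Besov-space embedding theorem above, so they do not introduce genuinely new difficulties beyond careful application of the technical lemmas of Section 2. One should also verify that the series $\sum_m\lambda_{v,m}\varrho_{v,m}$, reassembled, reproduces $f$; this is just \eqref{Ass4} read distributionally, already recorded in the identity displayed before the definition of $S_\varphi$ and $T_\psi$.
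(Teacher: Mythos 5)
Your overall architecture coincides with the paper's: the synthesis direction is handled exactly as in Step 3 of the paper's proof (Lemma \ref{FJ-lemma}, splitting into $v\le j$ and $v\ge j$, the $r$-trick together with Lemma \ref{Conv-est2}, then Lemmas \ref{Key-lemma} and \ref{DHR-lemma1}), and the analysis direction uses the same reproducing formula, coefficients given by local suprema of $\psi_v\ast f$, and the Peetre maximal characterization of Theorem \ref{fun-char}. There is, however, one genuine error in your construction of the atoms.

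You define $\varrho_{v,m}=\lambda_{v,m}^{-1}(\widetilde{\varphi}_v\ast f)(2^{-v}m)\,\psi_v(\cdot-2^{-v}m)$ and claim that the support condition \eqref{supp-cond} follows from $\mathrm{supp}\,\mathcal{F}\psi\subset\overline{B(0,2)}\setminus B(0,1/2)$. This cannot work: a nonzero function whose Fourier transform has compact support is real-analytic and cannot vanish on any open set, so $\psi_v(\cdot-2^{-v}m)$ is supported on all of $\mathbb{R}^n$ and is not an atom in the sense of Definition \ref{Atom-Def}. The Fourier-support hypothesis does give the moment conditions \eqref{mom-cond} (all moments of $\psi$ vanish), but it never yields localization in physical space. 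The paper avoids this by not using point evaluations: it rewrites the reproducing formula as a sum of integrals over the dyadic cubes and sets
\begin{equation*}
\varrho_{v,m}(x)=\frac{1}{\lambda_{v,m}}\,2^{vn}\int_{Q_{v,m}}\widetilde{\varphi}(2^{v}(x-y))\,\psi_{v}\ast f(y)\,dy ,\qquad \lambda_{v,m}=C\sup_{y\in Q_{v,m}}\left\vert \psi_v\ast f(y)\right\vert ,
\end{equation*}
so that the support of the atom is inherited from the physical-space support of the synthesis kernel together with the restriction $y\in Q_{v,m}$; this is the Frazier--Jawerth construction, in which the synthesis kernel is arranged to be compactly supported. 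As written, your decomposition does not produce admissible $[K,L]$-atoms, so the ``only if'' direction is not established. Two smaller points: your $\varrho_{v,m}$ is missing the factor $2^{-vn}$ needed for $\sum_{v,m}\lambda_{v,m}\varrho_{v,m}=f$; and in the synthesis direction the auxiliary exponent must satisfy $0<r<\frac{\tau^-}{\tau^+}\min(1,p^-,q^-)$, not merely $r<\min(1,p^-,q^-)$, in order to apply Lemma \ref{DHR-lemma1} to the exponents $p(\cdot)/r$ and $q(\cdot)/r$ --- this is precisely where the factor $\frac{\tau^+}{\tau^-\min(1,p^-,q^-)}$ in \eqref{K,L,B-cond} originates, as you correctly guess at the end but contradict earlier.
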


The convergence in $\mathcal{S}^{\prime }(\mathbb{R}^{n})$ can be obtained
as a by-product of the proof using the same method as in \cite[ Corollary
13.9 ]{T3} and \cite{D6}.\vskip5pt

If $p$, $q$, $\tau $, and $\alpha $ are constants, then the restriction $%
\mathrm{\eqref{K,L,B-F-cond}}$, and their counterparts, in the atomic
decomposition theorem are $K\geq ([\alpha +n\tau ]+1)^{+}$\ and $L\geq \max
(-1,[n(\frac{1}{\min (1,p,q)}-1)-\alpha ])$, which are essentially the
restrictions from the works of \cite[Theorem 3.12]{D4}.

\begin{proof}
{The proof follows the ideas in \cite[Theorem 6]{FJ86}.\vskip5pt }

\textit{Step 1}. Assume that $f\in \mathfrak{F}_{p(\cdot ),q(\cdot
)}^{\alpha (\cdot ),\tau (\cdot )}$ and let $\Phi $ and $\varphi $ satisfy,
respectively $\mathrm{\eqref{Ass1}}$ and $\mathrm{\eqref{Ass2}}$. There
exist \ functions $\Psi \in \mathcal{S}(\mathbb{R}^{n})$ satisfying $\mathrm{%
\eqref{Ass1}}$ and $\psi \in \mathcal{S}(\mathbb{R}^{n})$ satisfying $%
\mathrm{\eqref{Ass2}}$ such that for all $\xi \in \mathbb{R}^{n}$%
\begin{equation*}
f=\Psi \ast \widetilde{\Phi }\ast f+\sum_{v=1}^{\infty }\psi _{v}\ast 
\widetilde{\varphi }_{v}\ast f,
\end{equation*}%
see Section 3. Using the definition of the cubes $Q_{v,m}$ we obtain%
\begin{equation*}
f(x)=\sum\limits_{m\in \mathbb{Z}^{n}}\int_{Q_{0,m}}\widetilde{\Phi }%
(x-y)\Psi \ast f(y)dy+\sum_{v=1}^{\infty }2^{vn}\sum\limits_{m\in \mathbb{Z}%
^{n}}\int_{Q_{v,m}}\widetilde{\varphi }(2^{v}(x-y))\psi _{v}\ast f(y)dy,
\end{equation*}%
with convergence in $\mathcal{S}^{\prime }(\mathbb{R}^{n})$. We define for
every $v\in \mathbb{N}$ and all $m\in \mathbb{Z}^{n}$%
\begin{equation}
\lambda _{v,m}=C_{\theta }\sup_{y\in Q_{v,m}}\left\vert \psi _{v}\ast
f(y)\right\vert  \label{Coefficient}
\end{equation}%
where%
\begin{equation*}
C_{\varphi }=\max \{\sup_{\left\vert y\right\vert \leq 1}\left\vert
D^{\alpha }\varphi (y)\right\vert :\left\vert \alpha \right\vert \leq K\}.
\end{equation*}%
Define also%
\begin{equation}
\varrho _{v,m}(x)=\left\{ 
\begin{array}{ccc}
\frac{1}{\lambda _{v,m}}2^{vn}\int_{Q_{v,m}}\widetilde{\varphi }%
_{v}(2^{v}(x-y))\psi _{v}\ast f(y)dy & \text{if} & \lambda _{v,m}\neq 0 \\ 
0 & \text{if} & \lambda _{v,m}=0%
\end{array}%
\right. .  \label{K-L-atom}
\end{equation}%
Similarly we define for every $m\in \mathbb{Z}^{n}$ the numbers $\lambda
_{0,m}$ and the functions $\varrho _{0,m}$ taking in $\mathrm{%
\eqref{Coefficient}}$ and $\mathrm{\eqref{K-L-atom}}$ $v=0$ and replacing $%
\psi _{v}$ and $\widetilde{\varphi }$ by $\Psi $ and $\widetilde{\Phi }$,
respectively. Let us now check that such $\varrho _{v,m}$ are atoms in the
sense of Definition \ref{Atom-Def}. Note that the support and moment
conditions are clear by $\mathrm{\eqref{Ass1}}$ and $\mathrm{\eqref{Ass2}}$,
respectively. It thus remains to check $\mathrm{\eqref{diff-cond}}$ in
Definition \ref{Atom-Def}. We have%
\begin{eqnarray*}
\left\vert D^{\beta }\varrho _{v,m}(x)\right\vert &\leq &\frac{%
2^{v(n+\left\vert \beta \right\vert )}}{C_{\varphi }}\int_{Q_{v,m}}\left%
\vert (D^{\beta }\widetilde{\varphi })(2^{v}(x-y))\right\vert \left\vert
\psi _{v}\ast f(y)\right\vert dy\Big(\sup_{y\in Q_{v,m}}\left\vert \psi
_{v}\ast f(y)\right\vert \Big)^{-1} \\
&\leq &\frac{2^{v(n+\left\vert \beta \right\vert )}}{C_{\varphi }}%
\int_{Q_{v,m}}\left\vert (D^{\beta }\widetilde{\varphi })(2^{v}(x-y))\right%
\vert dy\leq 2^{v(n+\left\vert \beta \right\vert )}\left\vert
Q_{v,m}\right\vert \leq 2^{v\left\vert \beta \right\vert }.
\end{eqnarray*}%
The modifications for the terms with $v=0$ are obvious.\vskip5pt

\textit{Step }2\textit{.} Next we show that there is a constant $c>0$ such
that $\left\Vert \lambda \right\Vert _{\mathfrak{f}_{p(\cdot ),q(\cdot
)}^{\alpha (\cdot ),\tau (\cdot )}}\leq c\left\Vert f\right\Vert _{\mathfrak{%
F}_{p(\cdot ),q(\cdot )}^{\alpha (\cdot ),\tau (\cdot )}}$. For that reason
we exploit the equivalent quasi-norms given in Theorem \ref{fun-char}
involving Peetre's maximal function. Let $v\in \mathbb{N}$. Taking into
account that $\left\vert x-y\right\vert \leq c$ $2^{-v}$ for $x,y\in Q_{v,m}$
we obtain%
\begin{equation*}
2^{v(\alpha \left( x\right) -\alpha \left( y\right) )}\leq \frac{c_{\log
}(\alpha )v}{\log (e+1/\left\vert x-y\right\vert )}\leq \frac{c_{\log
}(\alpha )v}{\log (e+2^{v}/c)}\leq c
\end{equation*}%
if $v\geq \left[ \log _{2}c\right] +2$. If $0<v<\left[ \log _{2}c\right] +2$%
, then $2^{v(\alpha \left( x\right) -\alpha \left( y\right) )}\leq
2^{v(\alpha ^{+}-\alpha ^{-})}\leq c$. Therefore,%
\begin{equation*}
2^{v\alpha \left( x\right) }\left\vert \psi _{v}\ast f(y)\right\vert \leq c%
\text{ }2^{v\alpha \left( y\right) }\left\vert \psi _{v}\ast f(y)\right\vert
\end{equation*}%
for any $x,y\in Q_{v,m}$ and any $v\in \mathbb{N}$. Hence,%
\begin{eqnarray*}
\sum\limits_{m\in \mathbb{Z}^{n}}\lambda _{v,m}2^{v\alpha \left( x\right)
}\chi _{v,m}(x) &=&C_{\theta }\sum\limits_{m\in \mathbb{Z}^{n}}2^{v\alpha
\left( x\right) }\sup_{y\in Q_{v,m}}\left\vert \psi _{v}\ast f(y)\right\vert
\chi _{v,m}(x) \\
&\leq &c\sum\limits_{m\in \mathbb{Z}^{n}}\sup_{\left\vert z\right\vert \leq c%
\text{ }2^{-v}}\frac{2^{v\alpha (x-z)}\left\vert \psi _{v}\ast
f(x-z)\right\vert }{(1+2^{v}\left\vert z\right\vert )^{a}}(1+2^{v}\left\vert
z\right\vert )^{a}\chi _{v,m}(x) \\
&\leq &c\text{ }\psi _{v}^{\ast ,a}2^{v\alpha \left( \cdot \right)
}f(x)\sum\limits_{m\in \mathbb{Z}^{n}}\chi _{v,m}(x)=c\text{ }\psi
_{v}^{\ast ,a}2^{v\alpha \left( \cdot \right) }f(x),
\end{eqnarray*}%
where we have used $\sum\limits_{m\in \mathbb{Z}^{n}}\chi _{v,m}(x)=1$. This
estimate and its counterpart for $v=0$ (which can be obtained by a similar
calculation) give%
\begin{equation*}
\left\Vert \lambda \right\Vert _{\mathfrak{f}_{p(\cdot ),q(\cdot )}^{\alpha
(\cdot ),\tau (\cdot )}}\leq c\left\Vert \left( \psi _{v}^{\ast
,a}2^{v\alpha \left( \cdot \right) }f\right) _{v}\right\Vert _{L_{p(\cdot
)}^{\tau (\cdot )}\left( \ell ^{q(\cdot )}\right) }\leq c\left\Vert
f\right\Vert _{\mathfrak{F}_{p(\cdot ),q(\cdot )}^{\alpha (\cdot ),\tau
(\cdot )}},
\end{equation*}%
by Theorem \ref{fun-char}.\vskip5pt

\textit{Step }3\textit{.} Assume that $f$ can be represented by $\mathrm{%
\eqref{new-rep}}$, with $K$ and $L$ satisfying $\mathrm{\eqref{K,L,B-F-cond}}
$ and $\mathrm{\eqref{K,L,B-cond}}$, respectively. We will show that $f\in 
\mathfrak{F}_{p\left( \cdot \right) ,q\left( \cdot \right) }^{\alpha \left(
\cdot \right) ,\tau \left( \cdot \right) }$ and that for some $c>0$, $%
\left\Vert f\right\Vert _{\mathfrak{F}_{p\left( \cdot \right) ,q\left( \cdot
\right) }^{\alpha \left( \cdot \right) ,\tau \left( \cdot \right) }}\leq
c\left\Vert \lambda \right\Vert _{\mathfrak{f}_{p\left( \cdot \right)
,q\left( \cdot \right) }^{\alpha \left( \cdot \right) ,\tau \left( \cdot
\right) }}$. The arguments are very similar to those in {\cite{D6}. }For the
convenience of the reader, we give some details. We write 
\begin{equation*}
f=\sum\limits_{v=0}^{\infty }\sum\limits_{m\in \mathbb{Z}^{n}}\lambda
_{v,m}\varrho _{v,m}=\sum\limits_{v=0}^{j}\cdot \cdot \cdot
+\sum\limits_{v=j+1}^{\infty }\cdot \cdot \cdot .
\end{equation*}%
From Lemmas \ref{FJ-lemma} and \ref{Conv-est1}, we have for any $M$
sufficiently large and any $v\leq j$%
\begin{eqnarray*}
&&\sum\limits_{m\in \mathbb{Z}^{n}}2^{j\alpha \left( x\right) }\left\vert
\lambda _{v,m}\right\vert \left\vert \varphi _{j}\ast \varrho
_{v,m}(x)\right\vert \\
&\lesssim &2^{(v-j)(K-\alpha ^{+})}\sum\limits_{m\in \mathbb{Z}%
^{n}}2^{v(\alpha \left( x\right) -n/2)}\left\vert \lambda _{v,m}\right\vert
\eta _{v,M}(x-x_{Q_{v,m}}) \\
&\lesssim &2^{(v-j)(K-\alpha ^{+})}\sum\limits_{m\in \mathbb{Z}%
^{n}}2^{v(\alpha \left( x\right) +n/2)}\left\vert \lambda _{v,m}\right\vert
\eta _{v,M}\ast \chi _{v,m}(x).
\end{eqnarray*}%
Lemma \ref{DHR-lemma} gives $2^{v\alpha \left( \cdot \right) }\eta
_{v,M}\ast \chi _{v,m}\lesssim \eta _{v,T}\ast 2^{v\alpha \left( \cdot
\right) }\chi _{v,m}$, with $T=M-c_{\log }(\alpha )$ and since $K>\alpha
^{+}+\tau ^{+}$ we apply Lemma \ref{Key-lemma} to obtain%
\begin{eqnarray*}
&&\Big\|\Big(\sum_{v=0}^{j}2^{(v-j)(K-\alpha ^{+})}\eta _{v,T}\ast \Big[%
2^{v(\alpha \left( \cdot \right) +n/2)}\sum\limits_{m\in \mathbb{Z}%
^{n}}\left\vert \lambda _{v,m}\right\vert \chi _{v,m}\Big]\Big)_{j}\Big\|%
_{L_{p(\cdot )}^{\tau (\cdot )}\left( \ell ^{q(\cdot )}\right) } \\
&\lesssim &\Big\|\Big(\eta _{v,T}\ast \Big[2^{v(\alpha \left( \cdot \right)
+n/2)}\sum\limits_{m\in \mathbb{Z}^{n}}\left\vert \lambda _{v,m}\right\vert
\chi _{v,m}\Big]\Big)_{v}\Big\|_{L_{p(\cdot )}^{\tau (\cdot )}\left( \ell
^{q(\cdot )}\right) }.
\end{eqnarray*}%
The right-hand side can be rewritten us%
\begin{eqnarray*}
&&\sup_{P\in \mathcal{Q}}\Big\|\Big(\frac{\Big(\eta _{v,T}\ast \Big[%
2^{v(\alpha \left( \cdot \right) +n/2)}\sum\limits_{m\in \mathbb{Z}%
^{n}}\left\vert \lambda _{v,m}\right\vert \chi _{v,m}\Big]\Big)^{r}}{%
\left\vert P\right\vert ^{r\tau (\cdot )}}\chi _{P}\Big)_{v\geq v_{P}^{+}}%
\Big\|_{L^{p(\cdot )/r}(\ell ^{q(\cdot )/r})}^{1/r} \\
&\lesssim &\sup_{P\in \mathcal{Q}}\Big\|\Big(\frac{\eta _{v,Tr}\ast \Big[%
2^{v(\alpha \left( \cdot \right) +n/2)r}\sum\limits_{m\in \mathbb{Z}%
^{n}}\left\vert \lambda _{v,m}\right\vert ^{r}\chi _{v,m}\Big]}{\left\vert
P\right\vert ^{r\tau (\cdot )}}\chi _{P}\Big)_{v\geq v_{P}^{+}}\Big\|%
_{L^{p(\cdot )/r}(\ell ^{q(\cdot )/r})}^{1/r},
\end{eqnarray*}%
by Lemma \ref{Conv-est2}, since $\eta _{v,T}\approx \eta _{v,T}\ast \eta
_{v,T}$ and $0<r<\frac{\tau ^{-}}{\tau ^{+}}\min (1,p^{-},q^{-})$. The
application of Lemma \ref{DHR-lemma1} and the fact that 
\begin{equation*}
\left\Vert \left( g_{v}\right) _{v\geq v_{P}^{+}}\right\Vert _{L^{p(\cdot
)/r}(\ell ^{q(\cdot )/r})}^{1/r}=\left\Vert \left( |g_{v}|^{1/r}\right)
_{v\geq v_{P}^{+}}\right\Vert _{L^{p(\cdot )}(\ell ^{q(\cdot )})}
\end{equation*}%
give that the last expression is bounded by $\left\Vert \lambda \right\Vert
_{\mathfrak{f}_{p\left( \cdot \right) ,q\left( \cdot \right) }^{\alpha
\left( \cdot \right) ,\tau \left( \cdot \right) }}$. Now from Lemma \ref%
{FJ-lemma}, we have for any $M$ sufficiently large and $v\geq j$%
\begin{eqnarray*}
&&\sum\limits_{m\in \mathbb{Z}^{n}}2^{j\alpha \left( x\right) }\left\vert
\lambda _{v,m}\right\vert \left\vert \varphi _{j}\ast \varrho
_{v,m}(x)\right\vert \\
&\lesssim &2^{(j-v)(L+1+n/2)}\sum\limits_{m\in \mathbb{Z}^{n}}2^{j(\alpha
\left( x\right) -n/2)}\left\vert \lambda _{v,m}\right\vert \eta
_{j,M}(x-x_{Q_{v,m}}) \\
&\lesssim &2^{(j-v)(L+1+n/2)}\sum\limits_{m\in \mathbb{Z}^{n}}2^{j(\alpha
\left( x\right) -n/2)}\left\vert \lambda _{v,m}\right\vert \eta _{j,M}\ast
\eta _{v,M}(x-x_{Q_{v,m}}),
\end{eqnarray*}%
where the last inequality follows by Lemma \ref{Conv-est}, since $\eta
_{j,M}=\eta _{\min (v,j),M}$. Again by Lemma \ref{Conv-est1}, we have%
\begin{equation*}
\eta _{j,M}\ast \eta _{v,M}(x-x_{Q_{v,m}})\lesssim 2^{vn}\eta _{j,M}\ast
\eta _{v,M}\ast \chi _{v,m}(x).
\end{equation*}%
Therefore, $\sum\limits_{m\in \mathbb{Z}^{n}}2^{j\alpha \left( x\right)
}\left\vert \lambda _{v,m}\right\vert \left\vert \varphi _{j}\ast \varrho
_{v,m}(x)\right\vert $ is bounded by%
\begin{eqnarray*}
&&c\text{ }2^{(j-v)(L+1-n/2)}\sum\limits_{m\in \mathbb{Z}^{n}}2^{j(\alpha
\left( x\right) +n/2)}\left\vert \lambda _{v,m}\right\vert \eta _{j,M}\ast
\eta _{v,M}\ast \chi _{v,m}(x) \\
&\lesssim &2^{(j-v)(L+1-\alpha ^{-})}\eta _{j,T}\ast \eta _{v,T}\ast \Big[%
2^{v(\alpha \left( \cdot \right) +n/2)}\sum\limits_{m\in \mathbb{Z}%
^{n}}\left\vert \lambda _{v,m}\right\vert \chi _{v,m}\Big](x),
\end{eqnarray*}%
by Lemma \ref{DHR-lemma}, with $T=M-c_{\log }(\alpha )$. Let $0<r<\frac{\tau
^{-}}{\tau ^{+}}\min (1,p^{-},q^{-})$\ be a real number such that $%
L>n/r-1-\alpha ^{-}-n$. We have%
\begin{eqnarray*}
&&\Big(\sum_{v=j}^{\infty }2^{(j-v)(L+1-\alpha ^{-})}\eta _{j,T}\ast \eta
_{v,T}\ast \Big[2^{v(\alpha \left( \cdot \right) +n/2)}\sum\limits_{m\in 
\mathbb{Z}^{n}}\left\vert \lambda _{v,m}\right\vert \chi _{v,m}\Big]\Big)^{r}
\\
&\leq &\sum_{v=j}^{\infty }2^{(j-v)(L-n/r+1-\alpha ^{-}+n)r}\eta _{j,Tr}\ast
\eta _{v,Tr}\ast \Big[2^{v(\alpha \left( \cdot \right)
+n/2)r}\sum\limits_{m\in \mathbb{Z}^{n}}\left\vert \lambda _{v,m}\right\vert
^{r}\chi _{v,m}\Big],
\end{eqnarray*}%
where we have used Lemma \ref{Conv-est2}. The application of Lemma \ref%
{DHR-lemma1} gives that%
\begin{equation*}
\Big\|\Big(\sum_{v=j}^{\infty }2^{(j-v)(L+1-\alpha ^{-})}\eta _{j,T}\ast
\eta _{v,T}\ast \Big[2^{v(\alpha \left( \cdot \right)
+n/2)}\sum\limits_{m\in \mathbb{Z}^{n}}\left\vert \lambda _{v,m}\right\vert
\chi _{v,m}\Big]\Big)_{j}\Big\|_{L_{p(\cdot )}^{\tau (\cdot )}\left( \ell
^{q(\cdot )}\right) }
\end{equation*}%
is bounded by%
\begin{equation*}
c\sup_{P\in \mathcal{Q}}\Big\|\Big(\frac{\sum_{v=j}^{\infty }2^{(j-v)Hr}\eta
_{v,Tr}\ast \Big[2^{v(\alpha \left( \cdot \right) +n/2)r}\sum\limits_{m\in 
\mathbb{Z}^{n}}\left\vert \lambda _{vm}\right\vert ^{r}\chi _{v,m}\Big]}{%
\left\vert P\right\vert ^{r\tau (\cdot )}}\chi _{P}\Big)_{j\geq j_{P}^{+}}%
\Big\|_{L^{p(\cdot )/r}(\ell ^{q(\cdot )/r})}^{1/r},
\end{equation*}%
where $H:=L-n/r+n+1-\alpha ^{-}$. Observing that $H>0$, an application of
Lemma \ref{Key-lemma},\ yields that the last expression is bounded by%
\begin{equation*}
c\sup_{P\in \mathcal{Q}}\Big\|\Big(\frac{\eta _{v,Tr}\ast \Big[2^{v(\alpha
\left( \cdot \right) +n/2)r}\sum\limits_{m\in \mathbb{Z}^{n}}\left\vert
\lambda _{v,m}\right\vert ^{r}\chi _{v,m}\Big]}{\left\vert P\right\vert
^{r\tau (\cdot )}}\chi _{P}\Big)_{v\geq v_{P}^{+}}\Big\|_{L^{p(\cdot
)/r}(\ell ^{q(\cdot )/r})}^{1/r}\lesssim \left\Vert \lambda \right\Vert _{%
\mathfrak{f}_{p\left( \cdot \right) ,q\left( \cdot \right) }^{\alpha \left(
\cdot \right) ,\tau \left( \cdot \right) }},
\end{equation*}%
where we used again Lemma \ref{DHR-lemma1} and hence the proof is complete.
\end{proof}

\section{Appendix}

Here we present more technical proofs of the Lemmas.\vskip5pt

{\emph{Proof of Lemma} \ref{DHR-lemma1}.} By the scaling argument, we see
that it suffices to consider when $\left\Vert \left( f_{v}\right)
_{v}\right\Vert _{L_{p(\cdot )}^{\tau (\cdot )}\left( \ell ^{q(\cdot
)}\right) }\leq 1$\ and show that for any dyadic cube $P$ 
\begin{equation}
\Big\|\Big(\sum_{v=v_{P}^{+}}^{\infty }\left\vert \frac{\eta _{v,m}\ast
\left\vert f_{v}\right\vert }{|P|^{\tau (\cdot )}}\right\vert ^{q(\cdot )}%
\Big)^{1/q(\cdot )}\chi _{P}\Big\|_{p(\cdot )}\lesssim 1.  \label{key-est3}
\end{equation}

\textit{Case 1.} $|P|>1$. Let $Q_{v}\subset P$ be a cube, with $\ell \left(
Q_{v}\right) =2^{-v}$ and $x\in Q_{v}\subset P$. We have%
\begin{eqnarray}
&&\eta _{v,m}\ast \left\vert f_{v}\right\vert (x)  \notag \\
&=&2^{vn}\int_{\mathbb{R}^{n}}\frac{\left\vert f_{v}(z)\right\vert }{\left(
1+2^{v}\left\vert x-z\right\vert \right) ^{m}}dz  \notag \\
&=&\int_{3Q_{v}}\cdot \cdot \cdot dz+\sum_{k=(k_{1},...,k_{n})\in \mathbb{Z}%
^{n},\max_{i=1,...,n}|k_{i}|\geq 2}\int_{Q_{v}+kl(Q_{v})}\cdot \cdot \cdot dz
\notag \\
&=&J_{v}^{1}(f_{v}\chi _{3Q_{v}})(x)+\sum_{k=(k_{1},...,k_{n})\in \mathbb{Z}%
^{n},\max_{i=1,...,n}|k_{i}|\geq 2}J_{v,k}^{2}(f_{v}\chi
_{Q_{v}+kl(Q_{v})})(x).  \label{sum1}
\end{eqnarray}%
Let $z\in Q_{v}+kl(Q_{v})$ with $k\in \mathbb{Z}^{n}$ and $|k|>4\sqrt{n}$.
Then $\left\vert x-z\right\vert \geq \left\vert k\right\vert 2^{-v-1}$ and 
\begin{equation*}
J_{v,k}^{2}(f_{v}\chi _{Q_{v}+kl(Q_{v})})(x)\lesssim \left\vert k\right\vert
^{-m}M_{Q_{v}+kl(Q_{v})}\left( f_{v}\right) (x).
\end{equation*}%
Let $d>0$ be such that $\tau ^{+}<d\leq \tau ^{-}\min \left(
p^{-},q^{-}\right) $. Therefore, the left-hand side of $\mathrm{%
\eqref{key-est3}}$ is bounded\ by%
\begin{eqnarray}
&&\sum_{k\in \mathbb{Z}^{n},|k|\leq 4\sqrt{n}}\Big\|\Big(\sum_{v=0}^{\infty
}\left\vert \frac{g_{v,k}}{|P|^{\tau (\cdot )}}\right\vert ^{q(\cdot )}\Big)%
^{1/q(\cdot )}\chi _{P}\Big\|_{p(\cdot )}  \label{sum-est} \\
&&+\sum_{k\in \mathbb{Z}^{n},|k|>4\sqrt{n}}\left\vert k\right\vert
^{w-m+n\left( 1+\frac{1}{t^{-}}\right) \tau ^{+}}\log \left\vert
k\right\vert \Big\|\Big(\sum_{v=0}^{\infty }\left\vert \frac{g_{v,k}}{%
|P|^{\tau (\cdot )}}\right\vert ^{q(\cdot )}\Big)^{1/q(\cdot )}\chi _{P}%
\Big\|_{p(\cdot )},  \notag
\end{eqnarray}%
where 
\begin{equation*}
g_{v,k}=\left\{ 
\begin{array}{ccc}
M_{3Q_{v}}\left( f_{v}\right) & \text{if} & k=0 \\ 
M_{Q_{v}+kl(Q_{v})}\left( f_{v}\right) & \text{if} & 0<|k|\leq 4\sqrt{n} \\ 
M_{Q_{v}+kl(Q_{v})}\left( \left\vert k\right\vert ^{-n\left( 1+1/t\left(
\cdot \right) \right) \tau \left( \cdot \right) }f_{v}\right) & \text{if} & 
|k|>4\sqrt{n}.%
\end{array}%
\right.
\end{equation*}%
with $\frac{1}{d}=\frac{1}{p(\cdot )\tau (\cdot )}+\frac{1}{t(\cdot )}$. By
similarity we only estimate the second norm in $\mathrm{\eqref{sum-est}}$.
This term is bounded if and only if%
\begin{equation}
\Big\|\Big(\sum_{v=0}^{\infty }\Big|\frac{(b_{k}g_{v,k})^{d/\tau (\cdot )}}{%
|P|^{d}}\Big|^{q(\cdot )\tau (\cdot )/d}\Big)^{d/\tau (\cdot )q(\cdot )}\chi
_{P}\Big\|_{p(\cdot )\tau (\cdot )/d}\lesssim 1,  \label{key-est4}
\end{equation}%
where $b_{k}=\left( \frac{1}{|k|^{w}\log \left\vert k\right\vert }\right)
^{\tau (\cdot )/d}$. Observe that $Q_{v}+kl(Q_{v})\subset Q(x,\left\vert
k\right\vert 2^{1-v})$. By H\"{o}lder's inequality, 
\begin{eqnarray*}
&&\left\vert Q(x,\left\vert k\right\vert 2^{1-v})\right\vert
M_{Q(x,\left\vert k\right\vert 2^{1-v})}\left( \left\vert k\right\vert
^{-n\left( 1+1/t\left( \cdot \right) \right) d}\left\vert f_{v}\right\vert
^{d/\tau \left( \cdot \right) }\right) \left( x\right) \\
&\lesssim &\Big\|\frac{\left\vert f_{v}\right\vert ^{1/\tau (\cdot )}}{%
|Q(x,\left\vert k\right\vert 2^{1-v})|}\chi _{Q(x,\left\vert k\right\vert
2^{1-v})}\Big\|_{p(\cdot )\tau (\cdot )}^{d}\Big\||Q(x,\left\vert
k\right\vert 2^{1-v})|\left\vert k\right\vert ^{-n\left( 1+1/t(\cdot
)\right) }\chi _{Q(x,\left\vert k\right\vert 2^{1-v})}\Big\|_{t(\cdot )}^{d}
\end{eqnarray*}%
for any $x\in Q(x,\left\vert k\right\vert 2^{1-v})$. The second norm is
bounded. The first norm is bounded if and only if%
\begin{equation*}
\Big\|\frac{f_{v}}{|Q(x,\left\vert k\right\vert 2^{1-v})|^{\tau (\cdot )}}%
\chi _{Q(x,\left\vert k\right\vert 2^{1-v})}\Big\|_{p(\cdot )}\lesssim 1,
\end{equation*}%
{which follows since }$\left\Vert \left( f_{v}\right) _{v}\right\Vert
_{L_{p(\cdot )}^{\tau (\cdot )}\left( \ell ^{q(\cdot )}\right) }\leq 1$.
Therefore we can apply Lemma \ref{DHHR-estimate}, 
\begin{eqnarray*}
&&\Big(M_{Q(x,\left\vert k\right\vert 2^{1-v})}\left( \left\vert
k\right\vert ^{-n\left( 1+1/t\left( \cdot \right) \right) \tau \left( \cdot
\right) }f_{v}\right) \left( x\right) \Big)^{d/\tau \left( x\right) } \\
&\leq &M_{Q(x,\left\vert k\right\vert 2^{1-v})}\Big(\left\vert \left\vert
k\right\vert ^{-n\left( 1+1/t\left( \cdot \right) \right) \tau \left( \cdot
\right) }f_{v}\right\vert ^{d/\tau \left( \cdot \right) }\Big)\left(
x\right) +\min \left( 1,\left\vert k\right\vert ^{ns}2^{n\left( 1-v\right)
s}\right) \omega (x)
\end{eqnarray*}%
for any $s>0$ large enough, where%
\begin{eqnarray*}
\omega (x) &=&\left( e+\left\vert x\right\vert \right)
^{-s}+M_{Q(x,\left\vert k\right\vert 2^{1-v})}\left( \left( e+\left\vert
\cdot \right\vert \right) ^{-s}\right) \left( x\right) \\
&\leq &\left( e+\left\vert x\right\vert \right) ^{-s}+\mathcal{M}\left(
\left( e+\left\vert \cdot \right\vert \right) ^{-s}\right) \left( x\right)
=h\left( x\right) .
\end{eqnarray*}%
Therefore $\left( g_{v,k}\right) ^{d/\tau (\cdot )}$ can be estimated by%
\begin{equation*}
c\text{ }M_{Q(\cdot ,\left\vert k\right\vert 2^{1-v})}\left( \left\vert
k\right\vert ^{-nd}\left\vert f_{v}\right\vert ^{d/\tau \left( \cdot \right)
}\right) +\sigma _{v,k}\text{ }h,
\end{equation*}%
where%
\begin{equation*}
\sigma _{v,k}=\left\{ 
\begin{array}{ccc}
1 & \text{if} & 2^{v}\leq 2\left\vert k\right\vert \\ 
\left\vert k\right\vert ^{ns}2^{-vns} & \text{if} & 2^{v}>2\left\vert
k\right\vert .%
\end{array}%
\right.
\end{equation*}%
Therefore, the quantity $\Big(\cdot \cdot \cdot \Big)^{d/\tau (\cdot
)q(\cdot )}$ of the term $\mathrm{\eqref{key-est4}}$ is bounded by 
\begin{eqnarray}
&&|k|^{-w}\Big(\sum_{v=0}^{\infty }\Big|M_{Q(\cdot ,\left\vert k\right\vert
2^{-v+1})}\Big(\frac{\left\vert k\right\vert ^{-nd}\left\vert
f_{v}\right\vert ^{d/\tau \left( \cdot \right) }}{|P|^{d}}\chi _{Q(\cdot
,\left\vert k\right\vert 2^{-v+1})}\Big)\Big|^{q(\cdot )\tau (\cdot )/d}\Big)%
^{d/\tau (\cdot )q(\cdot )}  \notag \\
&&+\frac{1}{\log \left\vert k\right\vert }\Big(\sum_{v=0}^{\infty }\left(
\sigma _{v,k}\left\vert h\right\vert \right) ^{q(\cdot )\tau (\cdot )/d}\Big)%
^{d/\tau (\cdot )q(\cdot )}.  \label{sum3}
\end{eqnarray}%
The first term is bounded by%
\begin{equation*}
\Big(\sum_{v=0}^{\infty }\Big|\eta _{v,w}\ast \Big(\frac{\left\vert
k\right\vert ^{-nd}\left\vert f_{v}\right\vert ^{d/\tau \left( \cdot \right)
}}{|P|^{d}}\chi _{Q(\cdot ,\left\vert k\right\vert 2^{1-v})}\Big)\Big|%
^{q(\cdot )\tau (\cdot )/d}\Big)^{d/\tau (\cdot )q(\cdot )}.
\end{equation*}%
where $w>n+c_{\log }(1/q)+c_{\log }(\tau )$. Applying Theorem \ref%
{DHR-theorem}, the $L^{p(\cdot )\tau (\cdot )/d}$-norm\ of this expression
is bounded by%
\begin{equation*}
\Big\|\Big(\sum_{v=0}^{\infty }\Big|\frac{\left\vert k\right\vert ^{-n\tau
(\cdot )}f_{v}}{|P|^{\tau (\cdot )}}\Big|^{q(\cdot )}\chi _{Q(\cdot
,\left\vert k\right\vert 2^{1-v})}\Big)^{d/\tau (\cdot )q(\cdot )}\Big\|%
_{p(\cdot )\tau (\cdot )/d}.
\end{equation*}%
Observe that $Q(\cdot ,\left\vert k\right\vert 2^{-v+1})\subset
Q(c_{P},\left\vert k\right\vert 2^{-v_{P}+1})$ and the measure of the last
cube is greater that $1$, the last norm is bounded by $1$. The summation $%
\mathrm{\eqref{sum3}}$ can be estimated by%
\begin{eqnarray*}
&&c\left\vert h\right\vert ^{q(\cdot )\tau (\cdot )/d}\Big(c+\sum_{v\geq
1,2^{v}\leq 2\left\vert k\right\vert }\frac{1}{\log \left\vert k\right\vert }%
+\sum_{2^{v}>2\left\vert k\right\vert }\Big(\frac{2^{v}}{\left\vert
k\right\vert }\Big)^{-nsq(\cdot )\tau (\cdot )/d}\Big) \\
&\lesssim &\left\vert h\right\vert ^{q(\cdot )\tau (\cdot )/d}.
\end{eqnarray*}%
This expression, with power $d/\tau (\cdot )q(\cdot )$, in the $L^{p(\cdot
)\tau (\cdot )/d}$-norm is bounded by $1$. Therefore, the second sum in $%
\mathrm{\eqref{sum-est}}$ is bounded by taking $m$ large enough such that $%
m>n\tau ^{+}+2n+w$, with $w>n+c_{\log }(1/q)+c_{\log }(\tau )$.\vskip5pt

\textit{Case 2.} $|P|\leq 1$. As before, 
\begin{equation*}
\eta _{v,m}\ast \left\vert f_{v}\right\vert (x)\lesssim J_{v}^{1}(f_{v}\chi
_{3P})(x)+\sum_{k=(k_{1},...,k_{n})\in \mathbb{Z}^{n},%
\max_{i=1,...,n}|k_{i}|\geq 2}J_{v,k}^{2}(f_{v}\chi _{P+kl(P)})(x).
\end{equation*}%
We see that 
\begin{equation*}
J_{v}^{1}(f_{v}\chi _{3P})(x)\lesssim \eta _{v,m}\ast \left( \left\vert
f_{v}\right\vert \chi _{3P}\right) (x).
\end{equation*}%
S{ince }$\tau ${\ is log-H\"{o}lder continuous,}%
\begin{equation*}
\left\vert P\right\vert ^{-\tau (x)}\leq c\left\vert P\right\vert ^{-\tau
(y)}(1+2^{v_{P}}\left\vert x-y\right\vert )^{c_{\log }\left( \tau \right)
}\leq c\left\vert P\right\vert ^{-\tau (y)}(1+2^{v}\left\vert x-y\right\vert
)^{c_{\log }\left( \tau \right) }
\end{equation*}%
for any $x\in P$ and any $y\in 3P$. Hence%
\begin{equation*}
\left\vert P\right\vert ^{-\tau (x)}J_{v}^{1}(f_{v}\chi _{3P})(x)\lesssim
\eta _{v,m-c_{\log }\left( \tau \right) }\ast \left( \left\vert P\right\vert
^{-\tau (\cdot )}\left\vert f_{v}\right\vert \chi _{3P}\right) (x).
\end{equation*}%
Also, we have 
\begin{equation*}
\left\vert P\right\vert ^{-\tau (x)}J_{v,k}^{2}(f_{v}\chi
_{P+kl(P)})(x)\lesssim \eta _{v,m-c_{\log }\left( \tau \right) }\ast \left(
\left\vert P\right\vert ^{-\tau (\cdot )}\left\vert f_{v}\right\vert \chi
_{P+kl(P)}\right) (x)
\end{equation*}%
for $x\in P$ and $z\in P+kl(P)$ with $k\in \mathbb{Z}^{n}$ and $|k|>4\sqrt{n}
$. Our estimate follows by\ Lemma \ref{DHR-lemma}. The proof is complete.%
\vskip5pt

{\emph{Proof of Lemma }\ref{lamda-equi}. Obviously, $\left\Vert \lambda
\right\Vert _{\mathfrak{f}_{p\left( \cdot \right) ,q\left( \cdot \right)
}^{\alpha \left( \cdot \right) ,\tau (\cdot )}}\leq \left\Vert \lambda
_{r,d}^{\ast }\right\Vert _{\mathfrak{f}_{p\left( \cdot \right) ,q\left(
\cdot \right) }^{\alpha \left( \cdot \right) ,\tau (\cdot )}}$. Let us prove
the converse inequality. By the scaling argument, it suffices to consider
the case $\left\Vert \lambda \right\Vert _{\mathfrak{f}_{p\left( \cdot
\right) ,q\left( \cdot \right) }^{\alpha \left( \cdot \right) ,\tau (\cdot
)}}\leq 1$ and show that the modular of a the sequence on the left-hand side
is bounded. It suffices to prove that%
\begin{equation}
\Big\|\Big(\dsum\limits_{v=v_{P}^{+}}^{\infty }\Big|\frac{\sum\limits_{m\in 
\mathbb{Z}^{n}}2^{v(\alpha \left( \cdot \right) +n/2)}\lambda
_{v,m,r,d}^{\ast }\chi _{v,m}}{|P|^{\tau \left( \cdot \right) }}\Big|%
^{q(\cdot )}\Big)^{1/q(\cdot )}\chi _{P}\Big\|_{p(\cdot )}\lesssim 1
\label{key-est11}
\end{equation}%
for any dyadic cube $P\in \mathcal{Q}$}. {For each $k\in \mathbb{N}_{0}$ we
define $\Omega _{k}:=\{h\in \mathbb{Z}^{n}:2^{k-1}<2^{v}\left\vert
2^{-v}h-2^{-v}m\right\vert \leq 2^{k}\}$\ and $\Omega _{0}:=\{h\in \mathbb{Z}%
^{n}:2^{v}\left\vert 2^{-v}h-2^{-v}m\right\vert \leq 1\}$. Then for any $%
x\in Q_{v,m}\cap P$, $\sum_{h\in \mathbb{Z}^{n}}\frac{2^{vr\alpha \left(
x\right) }|\lambda _{v,h}|^{r}}{(1+2^{v}|2^{-v}h-2^{-v}m|)^{d}}$ can be
rewritten as%
\begin{eqnarray}
&&\sum\limits_{k=0}^{\infty }\sum\limits_{h\in \Omega _{k}}\frac{2^{vr\alpha
\left( x\right) }\left\vert \lambda _{v,h}\right\vert ^{r}}{\left(
1+2^{v}\left\vert 2^{-v}h-2^{-v}m\right\vert \right) ^{d}}  \notag \\
&\lesssim &\sum\limits_{k=0}^{\infty }2^{-dk}\sum\limits_{h\in \Omega
_{k}}2^{vr\alpha \left( x\right) }\left\vert \lambda _{v,h}\right\vert ^{r} 
\notag \\
&=&\sum\limits_{k=0}^{\infty }2^{(n-d)k+(v-k)n+vr\alpha \left( x\right)
}\int\limits_{\cup _{z\in \Omega _{k}}Q_{v,z}}\sum\limits_{h\in \Omega
_{k}}\left\vert \lambda _{v,h}\right\vert ^{r}\chi _{v,h}(y)dy.
\label{key-est2}
\end{eqnarray}%
Let $x\in Q_{v,m}\cap P$ and $y\in \cup _{z\in \Omega _{k}}Q_{v,z}$, then $%
y\in Q_{v,z}$ for some $z\in \Omega _{k}$ and $2^{k-1}<2^{v}\left\vert
2^{-v}z-2^{-v}m\right\vert \leq 2^{k}$. From this it follows that $y$ is
located in some cube $Q(x,2^{k-v+3})$. In addition, from the fact that%
\begin{equation*}
\left\vert y_{i}-(c_{P})_{i}\right\vert \leq \left\vert
y_{i}-x_{i}\right\vert +\left\vert x_{i}-(c_{P})_{i}\right\vert \leq
2^{k-v+2}+2^{-v_{P}-1}<2^{k-v_{P}+3},\text{ \ }i=1,...,n,
\end{equation*}%
we have $y$ is located in some cube $Q(c_{P},2^{k-v_{P}+4})$. S}ince $\alpha 
$ is log-H\"{o}lder continuous we can prove that 
\begin{equation*}
2^{v(\alpha \left( x\right) -\alpha \left( y\right) )}\lesssim \left\{ 
\begin{array}{ccc}
2^{c_{\log }(\alpha )k} & \text{if} & k<\max (0,v-h_{n}) \\ 
2^{(\alpha ^{+}-\alpha ^{-})k} & \text{if} & k\geq \max (0,v-h_{n}),%
\end{array}%
\right.
\end{equation*}%
where $c>0$ not depending on $v$ and $k$. Therefore, $\mathrm{%
\eqref{key-est2}}$ does not exceed%
\begin{eqnarray*}
&&c\sum\limits_{k=0}^{\infty
}2^{(n-d+a)k+(v-k)n}\int\limits_{Q(x,2^{k-v+3})}2^{v\alpha \left( y\right)
r}\sum\limits_{h\in \Omega _{k}}\left\vert \lambda _{v,h}\right\vert
^{r}\chi _{v,h}(y)\chi _{{Q(c_{P},2^{k-v_{P}+4})}}(y)dy \\
&\lesssim &\sum\limits_{k=0}^{\infty }2^{(n-d+a)k}M_{{Q(x,2^{k-v+3})}}\Big(%
\sum\limits_{h\in \Omega _{k}}2^{v\alpha \left( \cdot \right) r}\left\vert
\lambda _{v,h}\right\vert ^{r}\chi _{v,h}\chi _{{Q(c_{P},2^{k-v_{P}+4})}}%
\Big)(x),
\end{eqnarray*}%
where $a=\max \left( c_{\log }(\alpha ),\alpha ^{+}-\alpha ^{-}\right) $. To
prove\ {$\mathrm{\eqref{key-est11}}$\ }we can distinguish two cases:\vskip5pt

\textit{Case 1.} $|P|>1$. The left-hand side of $\mathrm{\eqref{key-est11}}$
is bounded by $1$ if and only if 
\begin{equation}
c\sum\limits_{k=0}^{\infty }2^{\varrho k}\Big\|\Big(\dsum\limits_{v=0}^{%
\infty }\Big(\frac{M_{{Q(\cdot ,2^{k-v+3})}}(2^{-kn\left( r+1/t\left( \cdot
\right) \right) \tau \left( \cdot \right) }g_{v,k,v_{P}})}{|P|^{r\tau \left(
\cdot \right) }}\Big)^{q\left( \cdot \right) /r}\Big)^{r/q\left( \cdot
\right) }\chi _{P}\Big\|_{p(\cdot )/r}\lesssim 1,  \label{principal-est}
\end{equation}%
{with}%
\begin{equation*}
g_{v,k,v_{P}}=\sum\limits_{h\in \Omega _{k}}2^{v(\alpha \left( \cdot \right)
+n/2)r}\left\vert \lambda _{v,h}\right\vert ^{r}\chi _{v,h}\chi _{{%
Q(c_{P},2^{k-v_{P}+4})}}
\end{equation*}%
and 
\begin{equation*}
\varrho =n-d+a+nr\tau ^{+}+n\frac{\tau ^{+}}{t^{-}}
\end{equation*}%
Let us prove that%
\begin{equation*}
\Big\|\Big(\dsum\limits_{v=0}^{\infty }\Big(\frac{\omega _{k}M_{{Q(\cdot
,2^{k-v+3})}}(2^{-kn\left( r+1/t\left( \cdot \right) \right) \tau \left(
\cdot \right) }g_{v,k,v_{P}})}{|P|^{r\tau \left( \cdot \right) }}\Big)%
^{q\left( \cdot \right) /r}\Big)^{r/q\left( \cdot \right) }\chi _{P}\Big\|%
_{p(\cdot )/r}\lesssim 1
\end{equation*}%
for any $k,v\in \mathbb{N}_{0}$ and any {$P\in \mathcal{Q}$, where}%
\begin{equation*}
\omega _{k}=\frac{1}{2^{(w-\frac{n\sigma }{t^{+}})k}+2^{kns}}.
\end{equation*}%
This is equivalent to%
\begin{equation}
\Big\|\Big(\dsum\limits_{v=0}^{\infty }\Big(\frac{\left( \omega _{k}M_{{%
Q(\cdot ,2^{k-v+3})}}(2^{-kn\left( r+1/t\left( \cdot \right) \right) \tau
\left( \cdot \right) }g_{v,k,v_{P}})\right) ^{\frac{\sigma }{\tau \left(
\cdot \right) }}}{|P|^{r\sigma }}\Big)^{\frac{q\left( \cdot \right) \tau
\left( \cdot \right) }{r\sigma }}\Big)^{\frac{\sigma r}{q\left( \cdot
\right) \tau \left( \cdot \right) }}\Big\|_{\frac{p(\cdot )\tau \left( \cdot
\right) }{r\sigma }}\lesssim 1,  \label{new-key-est}
\end{equation}%
where $\sigma >0$\ such that $\tau ^{+}<\sigma <\frac{\tau ^{-}\min \left(
p^{-},q^{-}\right) }{r}$. By H\"{o}lder's inequality, 
\begin{eqnarray*}
&&|{Q(x,2^{k-v+3})}|M_{{Q(x,2^{k-v+3})}}\left( 2^{-kn\left( r+1/t\left(
\cdot \right) \right) \sigma }\left\vert g_{v,k,v_{P}}\right\vert ^{\sigma
/\tau \left( \cdot \right) }\right) \left( x\right) \\
&\lesssim &\Big\|\frac{\left\vert g_{v,k,v_{P}}\right\vert ^{1/\tau (\cdot )}%
}{|{Q(x,2^{k-v+3})}|^{r}}\chi _{{Q(x,2^{k-v+3})}}\Big\|_{p(\cdot )\tau
(\cdot )/r}^{\sigma }\Big\||{Q(x,2^{k-v+3})}|^{r}2^{-kn\left( r+1/t\left(
\cdot \right) \right) }\chi _{{Q(x,2^{k-v+3})}}\Big\|_{t(\cdot )}^{\sigma }
\end{eqnarray*}%
for any $x\in {Q(x,2^{k-v+3})}$, where $\frac{1}{\sigma }=\frac{r}{p(\cdot
)\tau (\cdot )}+\frac{1}{t(\cdot )}$. The second norm is bounded. The first
norm is bounded if and only if%
\begin{equation*}
\left\Vert \frac{\left( g_{v,k,v_{P}}\right) ^{1/r}}{|{Q(x,2^{k-v+3})}%
|^{\tau (\cdot )}}\chi _{{Q(x,2^{k-v+3})}}\right\Vert _{p(\cdot )}\lesssim 1,
\end{equation*}%
{which follows since $\left\Vert \lambda \right\Vert _{\mathfrak{f}_{p\left(
\cdot \right) ,q\left( \cdot \right) }^{\alpha \left( \cdot \right) ,\tau
(\cdot )}}\leq 1$}. Hence we can apply Lemma \ref{DHHR-estimate} to estimate 
\begin{equation*}
\left( M_{{Q(x,2^{k-v+3})}}(2^{-kn\left( r+1/t\left( \cdot \right) \right)
\tau \left( \cdot \right) }g_{v,k,v_{P}})\left( x\right) \right) ^{\sigma
/\tau \left( x\right) }
\end{equation*}%
by%
\begin{equation*}
c\text{ }M_{{Q(x,2^{k-v+3})}}\left( \left( 2^{-kn\left( r+1/t\left( \cdot
\right) \right) \tau \left( \cdot \right) }g_{v,k,v_{P}}\right) ^{\sigma
/\tau \left( \cdot \right) }\right) \left( x\right) +\min \left( {2^{n(k-v)s}%
},1\right) h\left( x\right) ,
\end{equation*}%
where $s>0$\ large enough and $h$ is the same function in Lemma {\ref%
{DHR-lemma1}}. Hence the term in {$\mathrm{\eqref{new-key-est}}$, with }$%
\dsum\limits_{v=0}^{k+3}$ in place of $\dsum\limits_{v=0}^{\infty }$ is
bounded by {\ } 
\begin{eqnarray*}
&&c\dsum\limits_{v=0}^{k+3}\frac{1}{2^{(w-\frac{n\sigma }{t^{+}})k}+2^{kns}}%
\Big\|\frac{M_{{Q(\cdot ,2^{k-v+3})}}\left( g_{v,k,v_{P}}\right) ^{\frac{%
\sigma }{\tau \left( \cdot \right) }}}{|{Q(\cdot ,2^{k-v_{P}+3})}|^{r\sigma }%
}\Big\|_{\frac{p(\cdot )\tau \left( \cdot \right) }{r\sigma }} \\
&&+\frac{c\left( k+4\right) ^{s}}{2^{(w-\frac{n\sigma }{t^{+}})k}+2^{kns}}.
\end{eqnarray*}%
Since $\mathcal{M}$ is bounded in $L^{p(\cdot )\tau \left( \cdot \right)
/r\sigma }$ the last norm is bounded by%
\begin{equation*}
\Big\|\mathcal{M}\Big(\frac{\left( g_{v,k,v_{P}}\right) ^{\frac{\sigma }{%
\tau \left( \cdot \right) }}}{|{Q(\cdot ,2^{k-v_{P}+3})}|^{r\sigma }}\Big)%
\Big\|_{\frac{p(\cdot )\tau \left( \cdot \right) }{r\sigma }}\lesssim \Big\|%
\frac{\left( g_{v,k,v_{P}}\right) ^{\frac{\sigma }{\tau \left( \cdot \right) 
}}}{|{Q(\cdot ,2^{k-v_{P}+3})}|^{r\sigma }}\Big\|_{\frac{p(\cdot )\tau
\left( \cdot \right) }{r\sigma }}.
\end{equation*}%
This term is bounded by $1$ if and only if%
\begin{equation*}
\Big\|\frac{\left( g_{v,k,v_{P}}\right) ^{1/r}}{|{Q(c_{P},2^{k-v_{P}+3})}%
|^{\tau \left( \cdot \right) }}\Big\|_{p(\cdot )}\lesssim 1,
\end{equation*}%
wich follows since $|{Q(\cdot ,2^{k-v_{P}+3})}|\geq 1$. Now, with $%
w>n+c_{\log }(1/q)+c_{\log }\left( \tau \right) $, the term in {$\mathrm{%
\eqref{new-key-est}}$, with }$\dsum\limits_{v=k+4}^{\infty }$ in place of $%
\dsum\limits_{v=0}^{\infty }$ is bounded by 
\begin{eqnarray*}
&&\frac{c\text{ }2^{(w-\frac{n\sigma }{t^{+}})k}}{2^{(w-\frac{n\sigma }{t^{+}%
})k}+2^{kns}}\Big\|\Big(\dsum\limits_{v=k+4}^{\infty }\Big(\frac{\eta
_{v,w}\ast \left( g_{v,k,v_{P}}\right) ^{\frac{\sigma }{\tau \left( \cdot
\right) }}}{|{Q(\cdot ,2^{k-v_{P}+3})}|^{r\sigma }}\Big)^{\frac{q\left(
\cdot \right) \tau \left( \cdot \right) }{r\sigma }}\Big)^{\frac{\sigma r}{%
q\left( \cdot \right) \tau \left( \cdot \right) }}\Big\|_{\frac{p(\cdot
)\tau \left( \cdot \right) }{r\sigma }} \\
&&+\frac{c\text{ }2^{kns}}{2^{(w-\frac{n\sigma }{t^{+}})k}+2^{kns}} \\
&\lesssim &1,
\end{eqnarray*}%
by Theorem \ref{DHR-theorem}. Therefore our estimate {$\mathrm{%
\eqref{principal-est}}$ follows by taking }$0<s<\frac{w}{n}$ and the fact
that{\ }%
\begin{equation*}
d>n(r+1)\tau ^{+}+n+a+w.
\end{equation*}

\textit{Case 2.} $|P|<1$. We have{\ }%
\begin{equation*}
\left\vert P\right\vert ^{-\tau (x)}\leq c\left\vert P\right\vert ^{-\tau
(y)}(1+2^{v_{P}}\left\vert x-y\right\vert )^{c_{\log }\left( \tau \right)
}\leq c\left\vert P\right\vert ^{-\tau (y)}(1+2^{v}\left\vert x-y\right\vert
)^{c_{\log }\left( \tau \right) }
\end{equation*}
for any $x,y\in \mathbb{R}^{n}$. Hence,%
\begin{equation*}
\frac{\eta _{v,w}\ast g_{v,k,v_{P}}}{|P|^{\tau (\cdot )}}\lesssim \eta
_{v,w-c_{\log }\left( \tau \right) }\ast \left( \frac{g_{v,k,v_{P}}}{%
|P|^{\tau (\cdot )}}\right) .
\end{equation*}%
Therefore, we can use the similar arguments above to obtain the desired
estimate, where we did not need to use Lemma \ref{DHHR-estimate}, which
could be used only to move $\left\vert P\right\vert ^{\tau (\cdot )}$ inside
the convolution and hence the proof is complete.

\end{document}